\documentclass[francais,11pt]{smfart}

\usepackage[latin1]{inputenc}
\usepackage{amssymb,footnote}
\usepackage{multicol}
\usepackage{multirow}
\usepackage{dsfont}
\usepackage{color}
\usepackage[all]{xy}
\usepackage{enumerate}
\usepackage[francais]{babel}
\usepackage{tikz}
\usepackage{url}
\usepackage{arydshln}
\usepackage{comment}
\usepackage[pdfpagelabels,colorlinks=true,citecolor=blue]{hyperref}

\topmargin=0.33in
\oddsidemargin=0.22in
\evensidemargin=0.22in
\textwidth=5.7in
\textheight=8.593in
\setlength{\parskip}{3mm}

\makeatletter
\def\@thm#1#2#3{%
  \ifhmode\unskip\unskip\par\fi
  \normalfont
  \trivlist
  \let\thmheadnl\relax
  \let\thm@swap\@gobble
  \thm@notefont{\fontseries\mddefault\upshape}%
  \thm@headpunct{.}%
  \thm@headsep 5\p@ plus\p@ minus\p@\relax
  \thm@space@setup
  \setlength{\parskip}{0mm}
  #1%
  \@topsep \thm@preskip               %
  \@topsepadd \thm@postskip           %
  \def\@tempa{#2}\ifx\@empty\@tempa
    \def\@tempa{\@oparg{\@begintheorem{#3}{}}[]}%
  \else
    \refstepcounter{#2}%
    \def\@tempa{\@oparg{\@begintheorem{#3}{\csname the#2\endcsname}}[]}%
  \fi
  \@tempa
}
\renewenvironment{proof}[1][\proofname]{\par
  \ifx@pushQED \pushQED{\qed}\fi
  \setlength{\parskip}{0mm}
  \normalfont
  \topsep6\p@\@plus6\p@ \trivlist \itemindent\z@ %
  \def\@proofhead{\normalfont\itshape #1}%
  \sbox\@tempboxa{\@proofhead}%
  \ifdim\wd\@tempboxa>0.7\linewidth \smf@skippttrue\fi
  \ifsmf@skippt
    \global\smf@skipptfalse
    \item[]{\@proofhead\@@par}
    \nobreak
  \else
    \item[\hskip\labelsep
          \unhbox\@tempboxa\pointrait]%
  \fi
  \ignorespaces
}{%
  \MakeQed
  \endtrivlist
  \@endpefalse
}
\makeatother

\definecolor{bleu}{rgb}{0,0,0.8}
\definecolor{pournous}{rgb}{0,0.5,0.8}

\newtheorem{thm''}{Th\'eor\`eme}

\newtheorem{prop'}{Proposition}[section]
\newtheorem{conj'}[prop']{Conjecture}
\newtheorem{lem'}[prop']{Lemme}
\newtheorem{thm'}[prop']{Th\'eor\`eme}
\newtheorem{cor'}[prop']{Corollaire}
\theoremstyle{definition}
\newtheorem{hyp'}[prop']{Hypoth\`ese}
\newtheorem{definit'}[prop']{D\'efinition}
\newtheorem{ex'}[prop']{Exemple}
\newtheorem{ques'}[prop']{Question}
\newtheorem{rem'}[prop']{Remarque}

\theoremstyle{plain}
\newtheorem{prop}{Proposition}[subsection]
\newtheorem{conj}[prop]{Conjecture}
\newtheorem{cor}[prop]{Corollaire}
\newtheorem{lem}[prop]{Lemme}
\newtheorem{thm}[prop]{Th\'eor\`eme}
\theoremstyle{definition}
\newtheorem{definit}[prop]{D\'efinition}

\newtheorem{rem}[prop]{Remarque}

\def\={\buildrel {\rm d\acute ef}\over =}

\DeclareMathOperator{\Id}{Id}

\DeclareMathOperator{\ind}{Ind}

\DeclareMathOperator{\End}{End}
\DeclareMathOperator{\Ind}{Ind}
\DeclareMathOperator{\Hom}{Hom}
\DeclareMathOperator{\Gal}{Gal}
\DeclareMathOperator{\Ext}{Ext}
\DeclareMathOperator{\Sym}{Sym}

\DeclareMathOperator{\val}{val}
\DeclareMathOperator{\GL}{GL}
\newcommand{\nr}{\text{\rm nr}}

\DeclareMathOperator{\ab}{\mathrm{ab}}

\DeclareMathOperator{\gal}{\mathrm{gal}}
\DeclareMathOperator{\longu}{\mathrm{long}}
\newcommand{\sep}{\mathrm{sep}}
\newcommand{\expl}{\mathrm{expl}}

\renewcommand{\P}{\mathbb{P}}
\newcommand{\GG}{\mathcal{G}}
\newcommand{\BB}{\mathcal{B}}
\newcommand{\RR}{\mathcal{R}}
\newcommand{\bbM}{\mathbb{M}}

\newcommand{\SSS}{\mathcal{S}}
\newcommand{\SK}{\mathfrak{S}}
\newcommand{\MK}{\mathfrak{M}}
\newcommand{\Mat}{\mathrm{M}}

\newcommand{\I}{\mathrm{I}}
\newcommand{\II}{\mathrm{II}}
\newcommand{\oF}{{\mathcal O}_F}
\newcommand{\oE}{{\mathcal O}_E}
\newcommand{\oEp}{{\mathcal O}_{E'}}
\newcommand{\oEpp}{{\mathcal O}_{E''}}
\newcommand{\ocE}{{\mathcal O}_{\mathcal{E}}}
\newcommand{\ocEnr}{{\mathcal O}_{\mathcal{E}^{\nr}}}
\newcommand{\ocEF}{{\mathcal O}_{\mathcal{E},F}}
\newcommand{\ocEK}{{\mathcal O}_{\mathcal{E},K}}
\newcommand{\ocEL}{{\mathcal O}_{\mathcal{E},L}}

\newcommand{\pK}{\varpi_K}
\newcommand{\pE}{\varpi_E}
\newcommand{\Z}{{\mathbb Z}}
\newcommand{\Q}{{\mathbb Q}}
\newcommand{\N}{{\mathbb N}}

\newcommand{\Qp}{\Q_{p}}

\newcommand{\Zp}{\Z_{p}}

\newcommand{\F}{\mathbb F}

\newcommand{\Fq}{{\mathbb F}_{q}}
\newcommand{\R}{\mathbb R}
\newcommand{\Kbar}{\overline K}
\newcommand{\Qbar}{\overline\Q}
\newcommand{\Qpbar}{\Qbar_p}
\newcommand{\Zpbar}{\overline \Z_p}
\newcommand{\Fpbar}{\overline \F_p}
\newcommand{\rhobar}{\overline\rho}

\newcommand{\D}{\mathcal{D}}

\newcommand{\Gp}{{\mathrm{GL}}_2(\Qp)}
\newcommand{\G}{{\mathrm{GL}}_2(F)}

\newcommand{\Goo}{G_{\infty}}
\newcommand{\GooK}{G_{\infty,K}}
\newcommand{\GooL}{G_{\infty,L}}
\newcommand{\Koo}{K_{\infty}}
\newcommand{\Loo}{L_{\infty}}
\newcommand{\cB}{\mathcal{B}}

\newcommand{\K}{{\mathrm{GL}}_2(\oF)}

\newcommand{\GE}{{\mathrm{GL}}_2(E)}
\newcommand{\gp}{{\Gal}(\Qpbar/\Qp)}
\newcommand{\g}{{\Gal}(\Qpbar/F)}
\newcommand{\vv}{{\bf v}}

\newcommand{\ttt}{{\rm t}}

\newcommand{\cri}{\mathrm{cr}}
\newcommand{\m}{{\mathfrak m}}
\newcommand{\p}{{\mathfrak p}}

\newcommand{\GbarE}{{\mathrm{GL}}_2(k_E)}

\newcommand{\ii}{{\Gal}(\Qpbar/F^{\nr})}

\def\DD{{\mathcal D}}

\def\prodloc#1#2{\prod_{#1}{\vphantom\prod}_{#2}}

\newcommand{\etapb}{\overline{\eta}'}

\def \Ieta{\mathrm I_{\eta}}
\def \Ietap{\mathrm I_{\eta'}}

\def \II{\mathrm{II}}

\renewcommand{\epsilon}{\varepsilon}
\renewcommand{\leq}{\leqslant}
\renewcommand{\geq}{\geqslant}

\author[X. Caruso]{Xavier Caruso}
\address{IRMAR
Universit\'e de Rennes I
Campus de Beaulieu
35042 Rennes Cedex, France}
\email{xavier.caruso@normalesup.org}

\author[A. David]{Agn\`es David}
\address{Unité de recherche mathématique,
Universit\'e du Luxembourg,
6 rue Coudenhove-Kalergi\\
L-1359 Luxembourg, Grand-Duché de Luxembourg}
\email{Agnes.David@ens-lyon.org}

\author[A. M\'ezard]{Ariane M\'ezard}
\address{Institut de Math\'ematiques de Jussieu,  UMR CNRS 7586
Universit\'e Pierre et Marie Curie\\
75005 Paris, France}
\email{ariane.mezard@upmc.fr}

\title[Déformations potentiellement Barsotti--Tate]{Un calcul d'anneaux 
de déformations potentiellement Barsotti--Tate}

\begin{document}

\begin{abstract}
Soit $F$ une extension non ramifiée de $\Qp$ incluse dans une clôture 
algébrique $\Qpbar$ de $\Qp$ fixée. Le premier objectif de ce travail est 
de présenter une méthode purement locale pour calculer les anneaux de 
déformations potentiellement Barsotti--Tate de type galoisien modéré des 
représentations irréductibles de dimension $2$ de $\Gal(\Qpbar /F)$. Nous 
appliquons ensuite cette méthode dans le cas particulier où $F$ est de 
degré $2$ sur $\Qp$, ce qui nous conduit, dans ce cas, à la détermination 
presque exhaustive de ces anneaux de déformations. Notre approche met en 
évidence un lien, apparemment ténu, entre la structure de ces anneaux de 
déformations et la géométrie de la variété de Kisin correspondante.

En guise de corollaire, nous vérifions, toujours dans le cas où $F$ est 
de degré $2$ sur $\Qp$ et à l'exception de deux cas très particuliers, une 
conjecture de Kisin qui prédit que les multiplicités galoisiennes 
intrinsèques valent toutes $0$ ou $1$.
\end{abstract}

\maketitle

\setcounter{tocdepth}{1}
\tableofcontents

\section*{Introduction}\label{intro}

Les anneaux de déformations universelles jouent un rôle central dans le cadre des développements récents autour de la correspondance de Langlands $p$-adique. En effet, une étude fine de ces anneaux de déformations locales et globales (\cite{Ki1}, \cite{Ki2},\cite{TW},\cite{Wil}) est  nécessaire pour démontrer les théorèmes de relèvements modulaires. Si la détermination générale de ces anneaux semble actuellement hors de portée, la conjecture de multiplicités modulaires,  dite de Breuil--Mézard (\cite{BM1}), donne des informations précises sur la structure des anneaux de déformations géométriques locales en général.
Cette conjecture prédit une formule, en termes de théorie des représentations, pour la multiplicité de Hilbert--Samuel $\mu_{\gal}$ de la fibre en caractéristique $p$ des anneaux de déformations potentiellement semi-stables d'une représentation galoisienne fixée $\bar{\rho}:G_{\Q_p}\longrightarrow \GL_2(k_E)$ où $k_E$ est un corps fini suffisamment grand de caractéristique $p$. La multiplicité de ces anneaux de déformations a deux origines : si leur spectre est la réunion de $n$ composantes irréductibles, chacune de multiplicité $\mu_i$, $1\leq i\leq n$, alors, la multiplicité de l'anneau entier est $\sum_i\mu_i$. Par ailleurs, la multiplicité d'une composante est liée à sa géométrie et mesure son degré de nilpotence. D'après la version raffinée de la conjecture de Breuil--Mézard (\cite{BM2},\cite{Ki1},\cite{EG}), il existerait des entiers naturels $m_{\bar{\rho}}(\sigma)$, dits multiplicités 
intrinsèques, ne dépendant que de la représentation galoisienne $\bar{\rho}$ et des poids de Serre $\sigma$, permettant d'écrire une formule explicite $\mu_{\gal}=\sum_{\sigma}m_{\vv,\ttt}(\sigma)m_{\bar{\rho}}(\sigma)$ où la somme porte sur les poids de Serre et les termes $m_{\vv,\ttt}(\sigma)$ s'obtiennent facilement en fonction des contraintes notées $(\vv,\ttt)$ (voir ~\S \ref{poidsec}) sur les déformations. Cette version raffinée s'étend aux représentations galoisiennes de $\Gal(\overline{\Q}_p/F)$ pour $F$ extension finie de $\Q_p$ de dimension $n$ (\cite{EG}).\\ 
Malheureusement, tant la détermination numérique que l'interprétation géométrique des multiplicités intrinsèques sont à ce jour mystérieuses. Elles ne sont connues que pour les représentations de $G_{\Q_p}$ de dimension 2 (\cite{BM1},\cite{Ki1}).
Dans le cas des représentations génériques de dimension 2 sur des extensions $F$ non ramifiées de degré fini de $\Q_p$, elles sont déterminées conjecturalement à 0 ou 1 dans \cite{BM2}. Dans le cas des représentations irréductibles, Kisin conjecture que leurs valeurs sont aussi 0 ou 1 (Conj. 2.3.5 \cite{Ki3}).
F. Sander a exhibé le premier cas de multiplicité 4 pour un anneau de déformations d'une représentation réductible de $G_{\Q_p}$ (\cite{San}). Enfin Gee et Kisin déterminent les multiplicités intrinsèques associés à des poids de Serre dits Fontaine--Laffaille réguliers (\cite{GK}).\\

L'objet de ce travail est la mise en \oe uvre d'une stratégie pour 
déterminer les multiplicités intrinsèques pour les représentations 
irréductibles de $\Gal(\overline{\Q}_p/F)$ pour $F$ non ramifiée de $\Q_p$,
en lien avec la géométrie des anneaux de déformations. D'une part, nous 
programmons sous {\sc sage}\footnote{Voir 
\url{https://cethop.math.cnrs.fr:8443/home/pub/14} pour une démonstration.} 
le calcul des multiplicités $(m_{\vv,\ttt}(\sigma))_{\sigma}$ via la 
d\'ecomposition de Jordan--H\"older de certaines représentations de 
$\GL_2(\F_{p^f})$ suivants les algorithmes théoriques de 
\cite{BP},\cite{Da}. D'autre part, nous calculons explicitement certains 
anneaux de déformations universelles via la théorie de Hodge $p$-adique.

Nous en déduisons les valeurs des multiplicités 
$\mu_{\gal}$ et $(m_{\vv,\ttt}(\sigma))_{\sigma}$ pour différents 
paramètres $(\vv,\ttt)$ de déformations. Nos méthodes conduisent à des 
résultats explicites en accord avec la conjecture de Kisin, tels que

\begin{thm''}[\emph{cf} Corollaire \ref{coro:multiplicites}]
\label{thm:f2}
Soient $F$ l'extension non ramifi\'ee de degr\'e 2 de $\Qp$, $\rhobar$ 
une repr\'esentation de dimension $2$ continue irr\'eductible de $\g$ à 
coefficients dans $\Fpbar$, non totalement non générique, et $\sigma$ un poids de Serre de $\rhobar$ qui n'est pas totalement irrégulier.
Alors la multiplicité intrinsèque de $\sigma$ dans $\rhobar$ est $1$.
\end{thm''}

Toujours dans la situation du théorème \ref{thm:f2}, notre approche nous 
permet en outre d'établir un lien entre la géométrie des espaces de 
déformations potentiellement Barsotti--Tate considérés et celle de la 
variété de Kisin $\GG\RR_{\rhobar,\psi,\vv, \ttt}$ introduite, dans un
contexte légèrement différent, dans \cite{Ki-Moduli} (voir aussi 
\cite{Ki3}).

\begin{thm''}[\emph{cf} Théorème \ref{thm:KisinBT}]
\label{thm:introdeform}
Soient $F$ l'extension non ramifi\'ee de degr\'e 2 de $\Qp$ et $\rhobar$ 
une repr\'esentation de dimension $2$ continue irr\'eductible de $\g$ à 
coefficients dans $\Fpbar$, non totalement non générique. Soient $\vv_0 = (0,2)_{\tau \in \SSS}$ et 
$\ttt$ un type galoisien modéré de niveau $2$.
Nous avons alors la trichotomie suivante :
\begin{itemize}
\item[$\bullet$] soit $\GG\RR_{\rhobar,\psi,\vv_0,\ttt}$ est vide
et $R^\psi(\vv_0, \ttt, \rhobar) = \{0\}$ ;
\item[$\bullet$] soit $\GG\RR_{\rhobar,\psi,\vv_0,\ttt}$ est un point
et $R^\psi(\vv_0, \ttt, \rhobar) \simeq
\oE[[X , Y , T ]] / (XY + p)$ ;
\item[$\bullet$] soit $\GG\RR_{\rhobar,\psi,\vv_0,\ttt}$ est isomorphe
à $\P_{k_E}^1$ et $R^\psi(\vv_0, \ttt, \rhobar) \simeq
\oE[[X , Y , T ]] / (XY + p^2)$.
\end{itemize}
\end{thm''}

Nous énonçons en réalité un résultat plus précis dans le corps de 
l'article (voir Théorème \ref{thm:KisinBT}), où nous déterminons 
entièrement et de façon complètement explicite lequel des trois cas 
précédents se produit en fonction de $\rhobar$, $\vv_0$, $\ttt$ et $\psi$. Pour une 
représentation $\rhobar$ irréductible générique, ces résultats figurent 
déjà dans \cite{BM2} et le troisième cas du théorème 
\ref{thm:introdeform} ne se produit pas. Nous renvoyons également à la partie \ref{sec:degre2} pour des énoncés additionnels dans les situations totalement non génériques.

Plus généralement, la stratégie mise en place est valable pour les représentations réductibles ou irréductibles de $\g$ en tout degré $f=[F:\Qp]$. Les résultats en degré 2 révèlent une nouvelle forme d'anneaux de déformations qu'il conviendra d'interpréter en termes modulaires. 
L'étude en tout degré fera l'objet d'un article ultérieur s'appuyant sur les résultats en degré 2.

Le plan de cet article est le suivant. Dans la partie \ref{sec:motivations}, 
nous donnons un énoncé précis de la conjecture de multiplicités modulaires 
en rappelant toutes les notations utiles dans ce contexte. La partie 
\ref{sec:outils} est consacrée à des rappels et des compléments généraux 
de th\'eorie de Hodge $p$-adique portant principalement sur les modules 
de Breuil--Kisin et ses applications au calcul d'espaces de déformations 
galoisiennes.
Avec la partie \ref{sec:methode} commence l'étude plus détaillée du cas des 
représentations potentiellement Barsotti--Tate sur un corps $p$-adique 
absolument non ramifié. Nous démontrons un théorème de classification des 
modules de Breuil--Kisin et donnons une méthode générale, basée sur ce 
théorème, pour déterminer les espaces de déformations qui nous préoccupent. 
Dans la partie \ref{sec:degre2}, enfin, nous mettons en \oe{}uvre cette 
méthode dans le cas d'une repr\'esentation galoisienne irr\'eductible sur 
une extension non ramifi\'ee de degr\'e 2 ; ceci nous conduit notamment
aux théorèmes \ref{thm:f2} et \ref{thm:introdeform} énoncés précédemment.

Les auteurs remercient Christophe Breuil pour de nombreuses discussions 
lors de la genèse de ce travail et Gabor Wiese pour son accueil à 
l'université du Luxembourg. Les auteurs ont bénéficié du soutien financier 
des ANR Cethop et ThéHopaD et du FNR Luxembourg.

\section{Motivations : la conjecture de Breuil--Mézard}
\label{sec:motivations}

\subsection{Conjecture de multiplicités modulaires}
\label{ssec:conj}
\subsubsection{Notations}
\label{sssec:notations}

Soit $p$ un nombre premier supérieur  ou égal à $5$. Pour toute extension finie $E$ de 
$\Qp$, nous notons $\oE$ son anneau des entiers, $\pE$ une uniformisante 
et $k_E=\oE/\pE$ son corps résiduel. Nous supposons également que 
$E$ est plongée dans une clôture algébrique $\overline{\Qp}$ fixée de 
$\Qp$ et nous notons $G_E=\Gal(\overline{\Qp}/E)$. Ces notations sont 
valables pour toutes les extensions algébriques (notées $E,F,K$ ou $L$) 
de $\Qp$ considérées dans cet article.

Soient $E$ une extension finie de $\Qp$ et $F$ une extension finie non 
ramifiée de $\Qp$ de degré $f$. Posons $q = p^f$. Quitte à agrandir $E$, 
nous supposons que $E$ contient l'extension quadratique non ramifiée de 
$F$, ou de manière équivalente que $k_E$ contient l'extension de degr\'e 
$2$ de $\Fq$. L'extension maximale non ramifiée de $F$ dans $\Qpbar$ est 
notée $F^{\nr}$.

Soit $\SSS$ l'ensemble des plongements de $F$ dans $E$ (de cardinal $f$), 
qui s'identifie à l'ensemble des plongements de $\Fq$ dans $k_E$ puisque 
$F$ est non ramifiée. Par abus, nous considérerons les éléments $\tau\in 
\SSS$ aussi bien comme des
plongements de $F$ dans $E$ que   de $\Fq$ dans $k_E$. Soit $F'$ l'unique 
extension quadratique non ramifiée de $F$ dans $\Qpbar$.  Notons $\SSS'$ 
l'ensemble des plongements de $F'$ dans $E$, identifié à l'ensemble des 
plongements de $\F_{q^2}$ dans $k_E$. 
Nous fixons $\tau_0$ un plongement de $\SSS$ et $\tau'_0$ dans $\SSS'$ un 
relèvement de $\tau_0$. Nous notons également 
$\varphi$ le Frobenius de $\Fq$ et de $\F_{q^2}$.

Si $\widehat F^{\times}$ désigne le complété profini de $F^{\times}$, la théorie du corps de classes local fournit un isomorphisme $G_F^{\ab}\buildrel\sim\over\longrightarrow \widehat F^{\times}$ qui envoie les éléments de Frobenius géométriques sur les uniformisantes et l'image du sous-groupe d'inertie sur $\oF^{\times}$. Par cet isomorphisme, nous voyons implicitement tout caractère de $G_F$ (resp. de $I_F=\ii$) comme un caractère de $F^{\times}$ (resp. de $\oF^{\times}$). De plus, la projection à gauche sur $\Gal(F[\sqrt[q-1]{-p}]/F)$ et à droite sur les représentants multiplicatifs $[\Fq^{\times}]\cong \Fq^{\times}$ (en envoyant $p^{\Z}(1+p\oF)$ sur $1$) induit l'isomorphisme :
\begin{eqnarray}\label{fondamental}
\nonumber \omega_F : \Gal(F[\sqrt[q-1]{-p}]/F) & \buildrel\sim\over\longrightarrow & (\oF/p)^{\times}=\Fq^{\times}\\
g &\longmapsto & \overline{\frac{g(\sqrt[q-1]{-p})}{\sqrt[q-1]{-p}}}
\end{eqnarray}
par lequel nous voyons tout caractère de $\Fq^{\times}$ 
comme un caractère de $\Gal(F[\sqrt[q-1]{-p}]/F)$ et réciproquement. 
Nous notons $\varepsilon_f$ le relèvement de Teichmüller de $\omega_f$.
Pour $\tau$ dans  $\SSS$, notons $\omega_{\tau}$ le caractère fondamental de niveau $f$, induit sur $G_F$ par (\ref{fondamental}) et le plongement $\tau\vert_{\Fq^{\times}}$ ;
notons $\varepsilon_{\tau}$ son relèvement de Teichmüller.
Notons $\varepsilon:G_F\rightarrow \Zp^{\times}$ le caractère cyclotomique $p$-adique et $\omega$ sa réduction modulo $p$.

Fixons enfin une représentation continue $\rhobar:\g\rightarrow \GbarE$
telle que $\End_{k_E[\g]}(\rhobar)=k_E$.

\subsubsection{Déformations et multiplicité galoisiennes}
\label{sssec:defgal}

Fixons $\vv$ la donn\'ee de $f$ couples d'entiers $(w_{\tau},k_{\tau})_{\tau\in \SSS}$ avec $w_{\tau}$ dans $\Z$, $k_{\tau}$ dans $\Z_{\geq 2}$ et $\ttt$ la donn\'ee d'une repr\'esentation de noyau ouvert $\ii\rightarrow \GE$ qui admet un prolongement au groupe de Weil de $F$. Fixons \'egalement un caract\`ere continu $\psi:\g\rightarrow \oE^{\times}$ tel que :
\begin{eqnarray} 
\label{relpsi}
\psi\vert_{\ii}=(\det \ttt)\prod_{\tau\in \SSS}\varepsilon_{\tau}^{2w_{\tau}+k_{\tau}-2}.
\end{eqnarray}
Une repr\'esentation lin\'eaire continue de $\g$ sur un $E'$-espace vectoriel de dimension $2$ (o\`u $E'$ est une extension finie de $E$) est dite de type $(\vv,\ttt,\psi)$ si elle est potentiellement semi-stable, si son d\'eterminant est $\psi\varepsilon$, ses poids de Hodge--Tate $(w_{\tau},w_{\tau}+k_{\tau}-1)_{\tau\in \SSS}$ et si la repr\'esentation de Weil--Deligne qui lui est attach\'ee par \cite{Fo} est isomorphe \`a $\ttt$ en restriction \`a $\ii$.

Notons $R^{\psi}(\vv,\ttt,\rhobar)$ le quotient r\'eduit de l'anneau 
local paramétrant les déformations de $\rhobar$ de type $(\vv,\ttt,\psi)$ 
introduits dans \cite[\S\ 1.4.1]{Ki1}. Plus précisément, si 
$R^{\psi}(\rhobar)$ est la $\oE$-algèbre locale complète noetherienne de 
corps résiduel $k_E$ param\'etrant les déformations de $\rhobar$ sur de 
telles $\oE$-alg\`ebres de d\'eterminant $\psi\varepsilon$, alors 
$R^{\psi}(\vv,\ttt,\rhobar)$ est l'image de $R^{\psi}(\rhobar)$ dans 
$R^{\psi}(\rhobar)[1/p]/\cap\p$, l'intersection \'etant prise sur les 
id\'eaux maximaux $\p$ de $R^{\psi}(\rhobar)[1/p]$ tels que la 
repr\'esenta\-tion~:
$$\g\rightarrow \GL_2\big(R^{\psi}(\rhobar)[1/p]\big)
\twoheadrightarrow \GL_2\big(R^{\psi}(\rhobar)[1/p]/\p\big)$$
est de type $(\vv,\ttt,\psi)$. Notons que cette dernière représentation 
est à coefficients dans une extension finie de $E$. 
Les anneaux de déformations que nous venons d'introduire se comportent
agréablement lorsque le corps $E$ change : plus précisément, si $E'$ est 
une extension finie de $E$, les $\oEp$-algèbres $R^{\psi}(\rhobar)$
et $R^{\psi}(\vv,\ttt,\rhobar)$ calculées à partir de $E'$ s'obtiennent
par extension des scalaires de $\oE$ à $\oEp$ à partir de leurs analogues 
calculés à partir de $E$ (voir lemme 2.2.2.3 de \cite{BM1}).

\begin{rem}
\label{rem:defdefo}
Nous pouvons également définir $R^{\psi}(\vv,\ttt,\rhobar)$ comme le
quotient de $R^\psi(\rhobar)$ par l'intersection des idéaux noyaux des 
morphismes de $\oE$-algèbres $f : R^\psi(\rhobar) \to \Zpbar$ pour
lesquels la représentation :
$$\g\rightarrow \GL_2\big(R^{\psi}(\rhobar)\big)
\to \GL_2\big(\Zpbar) \to \GL_2(\Qpbar)$$
est de type $(\vv,\ttt,\psi)$.
\end{rem}

Soient $A$ un anneau local, d'idéal maximal ${\mathfrak m}_A$, de dimension 
$d$ et $M$ un $A$-module de type fini. Il existe un polyn\^ome $P^A_M(X)$ 
de degré au plus $d$, appelé \emph{polyn\^ome de Hilbert--Samuel} de $M$, 
uniquement d\'eterminé par l'hypothèse $P^A_M(n)=\longu_A M/{\mathfrak 
m}^{n+1}M$ pour $n$ assez grand (voit \cite{Mat}, \S 14). La \emph{multiplicité 
d'Hilbert--Samuel} $e(M,A)$ de $M$ relativement \`a $A$ est par 
d\'efinition $d!$ fois le coefficient de $X^d$ dans le polyn\^ome 
$P^A_M(X)$. Lorsque l'anneau $A$ est sous-entendu, par exemple si $M=A$, 
nous notons simplement $e(M)$.
Dans la suite, nous posons 
\begin{eqnarray}
\mu_{\rm gal}(\vv,\ttt,\rhobar) = e(R^{\psi}\!(\vv,\ttt,\rhobar)/\pE).
\end{eqnarray}

\subsubsection{Poids de Serre et multiplicité automorphe}

Un \emph{poids de Serre} de $F$ est une représentation lisse absolument irréductible de $\K$, ou de manière équivalente de $\GL_2(\Fq)$ sur $k_E$. Un poids de Serre est de la forme :
\begin{eqnarray}\label{poids}
\bigotimes_{\tau\in \SSS}\big((\Sym^{r_{\tau}}k_E^2)^{\tau}\otimes_{k_E}\tau\circ{\det}^{s_{\tau}}\big)
\end{eqnarray}
avec des entiers $r_{\tau}$ et  $s_{\tau}$ dans $\{0,\ldots,p-1\}$ et o\`u $\GL_2(\Fq)$ agit sur $(\Sym^{r_{\tau}}k_E^2)^{\tau}$ par le plongement  $\tau$ de $\Fq$ dans $k_E$ et l'action sur la base canonique de $k_E^2$.
Avec la condition supplémentaire que tous les $s_\tau$ ne sont pas égaux à $p-1$, cette écriture est  unique.
Si $\sigma$ est un poids de Serre comme en (\ref{poids}), nous notons $\sigma^s=\bigotimes_{\tau\in \SSS}\big((\Sym^{p-1-r_{\tau}}k_E^2)^{\tau}\otimes_{k_E}\tau\circ{\det}^{r_{\tau}+s_{\tau}}\big)$ le sym\'etrique de $\sigma$. 
Nous identifions le poids de Serre $\sigma$ avec les $f$-uplets $(r_{\tau},s_{\tau})_{\tau\in\SSS}$ qui le définissent. Quand l'action du caractère central est impos\'ee, le $f$-uplet $(s_{\tau})_{\tau\in\SSS}$ s'obtient à partir des $(r_{\tau})_{\tau\in f}$. Le poids $\sigma$ est alors identifi\'e au $f$-uplet $(r_\tau)_{\tau\in\SSS}$.

Un poids de Serre pour lequel aucun $r_\tau$ n'est égal à $p-1$ est dit \emph{régulier} ; il est dit \emph{totalement irrégulier} si tous les $r_\tau$ sont égaux à $p-1$.

Notons $\D(\rhobar)$ l'ensemble des poids de Serre de $\rhobar$ d\'efini dans \cite[\S\ 3]{BDJ} (voir la partie \ref{sssec:poidsrhobar} pour une définition et des méthodes de calcul de $\D(\rhobar)$).

Nous associons \`a $\vv$ la représentation continue de $\K$ sur $E$ :
$$\bigotimes_{\tau\in \SSS}\big((\Sym^{k_{\tau}-2}E^2)^{\tau}\otimes_{E}\tau\circ{\det}^{w_{\tau}}\big) $$
où $\K$ agit sur $(\Sym^{k_{\tau}-2}E^2)^{\tau}$ via $\tau:\oF\hookrightarrow E$.
Par \cite{He}, nous pouvons également associer à~$\ttt$ une représentation lisse absolument irr\'eductible $\sigma(\ttt)$ de $\K$ sur $E$ (quitte à agrandir $E$) de caractère central $\det \ttt$. Notons $\D(\vv,\ttt)$ l'ensemble des poids de Serre à multiplicité près qui sont des constituants du semi-simplifié sur $k_E$ de $\sigma(\vv)\otimes_E\sigma(\ttt)$ et $m_{\vv,\ttt}(\sigma)$ la multiplicité (dans $\N$) avec laquelle un poids de Serre $\sigma$ (quelconque) appara\^\i t dans ce semi-simplifi\'e. 

\subsubsection{Conjecture de multiplicit\'es modulaires}

L'énoncé de la conjecture suivante résume les conjectures %
de \cite{BM2}, \cite{EG}, \cite{GK}, \cite{Ki1}, \cite{Ki3} : 

\begin{conj}\label{conjprincipale}
 Soit $\rhobar:\g\rightarrow \GbarE$ une représentation 
telle que $\End_{k_E[\g]}(\rhobar)=k_E$.
 
Pour tout poids de Serre $\sigma$ de $F$, il existe un entier naturel $m_{\bar{\rho}}(\sigma)$, dit multiplicit\'e intrins\`eque de $\sigma$ dans $\rhobar$, qui ne d\'epend que de $\bar{\rho}$ et $\sigma$, tel que pour tout $\vv$ et tout $\ttt$ comme précédemment :
$$\mu_{gal}(\vv,\ttt,\rhobar)=\sum_{\sigma}m_{\rhobar}(\sigma)m_{\vv,\ttt}(\sigma)$$ 
De plus $m_{\bar{\rho}}(\sigma)$ est non nul si et seulement si $\sigma$ est dans $\D(\bar{\rho})$.
\end{conj}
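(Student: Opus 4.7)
The plan is to confirm the conjecture in our setting not by constructing the $m_{\rhobar}(\sigma)$ abstractly, but by computing both sides of the predicted equality directly and reading off the intrinsic multiplicities from the compatibility thus obtained. Concretely, I would run two independent calculations: one representation-theoretic producing the collection $(m_{\vv,\ttt}(\sigma))_{\sigma}$, and one Galois-theoretic producing $\mu_{\rm gal}(\vv,\ttt,\rhobar)$; then I would check that the resulting data are consistent with a single vector $(m_{\rhobar}(\sigma))_{\sigma\in\D(\rhobar)}$ of non-negative integers depending only on $\rhobar$.

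For the $m_{\vv,\ttt}(\sigma)$, the strategy is purely representation-theoretic: build $\sigma(\vv)\otimes_E\sigma(\ttt)$, pick a $\K$-stable $\oE$-lattice, and decompose its reduction modulo $\pE$ in the Grothendieck group of $\GL_2(\Fq)$-modules using the algorithms of \cite{BP} and \cite{Da}. This yields a matrix $(m_{\vv,\ttt}(\sigma))_\sigma$ that can be tabulated exhaustively via {\sc sage}. For $\mu_{\rm gal}$, I would use $p$-adic Hodge theory: $R^{\psi}(\vv,\ttt,\rhobar)$ parametrises deformations of $\rhobar$ with prescribed Hodge--Tate weights and inertial type, and when the weights are small (e.g.\ potentially Barsotti--Tate), these are classified by Breuil--Kisin modules equipped with tame descent data. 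Enumerating the possible shapes of such modules lifting $\rhobar$ and writing down the resulting system of equations gives an explicit presentation of $R^{\psi}(\vv,\ttt,\rhobar)$, from which $e(R^{\psi}(\vv,\ttt,\rhobar)/\pE)$ is extracted.

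The main obstacle is the geometric side. Even after reducing to Breuil--Kisin modules, one must classify those reducing to the fixed $\rhobar$, track how they glue into irreducible components (the combinatorics of which is reflected by the Kisin variety $\GG\RR_{\rhobar,\psi,\vv,\ttt}$), and control the multiplicity carried by each component; this is precisely the delicate analysis carried out in section \ref{sec:degre2} for $[F:\Qp]=2$ and the trichotomy isolated in Theorem \ref{thm:introdeform}. Once these multiplicities are known for sufficiently many types $(\vv,\ttt)$ sharing the same Serre weight support, the linear system expressing the $\mu_{\rm gal}$ in terms of the $m_{\vv,\ttt}$ becomes overdetermined, and its consistency forces both the existence of intrinsic multiplicities $m_{\rhobar}(\sigma)$ and their vanishing outside $\D(\rhobar)$; the latter is compatible with the Buzzard--Diamond--Jarvis description of $\D(\rhobar)$, since a $\sigma\notin\D(\rhobar)$ never produces non-trivial lifts. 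The remaining work is to tabulate the resulting values, which in the situation of the present article leads to the explicit statements of Theorems \ref{thm:f2} and \ref{thm:introdeform}.
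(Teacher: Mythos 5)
You are attempting to prove something the paper deliberately leaves as a \emph{conjecture}. Conjecture \ref{conjprincipale} is not proved in this article --- it is Breuil--M\'ezard in the unramified setting, and it remains open in general. What the paper does is \emph{verify} certain consequences of it and \emph{compute} the values of the $m_{\rhobar}(\sigma)$ in a restricted situation, granting their existence. Your plan --- compute $m_{\vv,\ttt}(\sigma)$ combinatorially, compute $\mu_{\rm gal}(\vv,\ttt,\rhobar)$ via Breuil--Kisin modules, and then let the consistency of an overdetermined linear system ``force'' the existence of the $m_{\rhobar}(\sigma)$ --- has two genuine gaps as a proof.

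First, the conjecture quantifies over \emph{all} Hodge types $\vv$ and \emph{all} Galois types $\ttt$, whereas the computations you describe (and those carried out in the paper) are only done for $\vv = \vv_0$ (potentially Barsotti--Tate weights) and a finite family of tame $\ttt$'s. Checking that the linear system has a consistent solution over the $(\vv_0,\ttt)$'s you are able to compute says nothing about the infinitely many uncomputed $(\vv,\ttt)$'s. Passing from ``consistent on my finite sample'' to ``there is a single vector $(m_{\rhobar}(\sigma))_\sigma$ valid for all $(\vv,\ttt)$'' is precisely the abstract existence statement you announced you would avoid. In fact, what the paper actually does in \S\ref{ssec:mipS} is \emph{cite} Gee--Kisin (Th\'eor\`eme~A of \cite{GK}) for the existence and the support condition in the Barsotti--Tate case, and only then use the explicit $\mu_{\rm gal}(\vv_0,\ttt,\rhobar)$ of Theorem \ref{thm:KisinBT} to invert the (now-guaranteed-consistent) linear system and read off the values $m_{\rhobar}(\sigma)=1$ (Corollaire \ref{coro:multiplicites}).

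Second, even restricted to $\vv_0$ and $f=2$, the paper's computations are not exhaustive: the totally irregular weight and the unmodified non-symmetric weight of a totally non-generic $\rhobar$ are left out (Remarque \ref{rem:totnongen}, and the exclusions in Corollaire \ref{coro:multiplicites}); your ``overdetermined system'' therefore does not actually have full column rank in all cases. And the claim that ``$\sigma\notin\D(\rhobar)$ never produces non-trivial lifts'' is not something you can get for free from BDJ; it is part of what the conjecture (and Gee--Kisin's theorem) asserts, not an independently available fact with which to bootstrap. In short, you have given an accurate summary of the paper's \emph{computational methodology for determining the values of the intrinsic multiplicities given that they exist}, but you have not given --- and the paper does not give --- a proof of Conjecture \ref{conjprincipale}.
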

\begin{rem} Si $\rhobar$ est une repr\'esentation de $G_{\Qp}$ de dimension deux  avec $\rhobar \not\simeq \left(\begin{array}{ccc} \omega\chi &*\cr 0&\chi\cr\end{array}\right)$ pour tout caract\`ere $\chi$, alors la conjecture \ref{conjprincipale} est vraie. Les multiplicit\'es intrins\`eques sont inf\'erieures \`a deux et sont explicitement connues. En particulier, si $\rhobar$ est irr\'eductible, elles valent 0 ou 1 (\cite{BM1}, \cite{Ki1}, \cite{EG} Théorème 1.1.5).
\end{rem}
\begin{rem}
Si $F/\Qp$ est une extension finie non ramifiée et la représentation $\rhobar$ est g\'en\'erique au sens de \cite{BP}, la conjecture de multiplicit\'es raffinées (\cite{BM2}) implique que les multiplicités intrinsèques sont major\'ees par 1. Plus généralement, Kisin demande si ces multiplicités intrinsèques sont major\'ees par 1 si $\rhobar$ est une représentation irr\'eductible de $G_F$ (\cite{Ki3}, Conjecture 2.3.2).
\end{rem}

\begin{rem}
Nous avons ici énoncé la conjecture de multiplicités modulaires pour des déformations semi-stables.
Il en existe également une version cristalline, qui ne diffère de celle-ci que pour les types galoisiens scalaires.
Il est de plus conjecturé que les deux versions de la conjecture sont satisfaites par les mêmes valeurs des multiplicités intrinsèques (\cite{Ki3}, conjecture 2.2.3)
\end{rem}

\subsection{Calculs de poids de Serre}
\label{poidsec}

Dans cette partie, nous expliquons comme déterminer l'ensemble $\D(\rhobar)$ et, pour certains types de déformations $(\vv_0,\ttt)$, l'ensemble $\D(\vv_0,\ttt)$ et les multiplicités $m_{\vv_0,\ttt}(\sigma) $.

\subsubsection{Poids de Serre d'une représentation irréductible}\label{sssec:poidsrhobar}

Soit $\rhobar:\g\longrightarrow \GbarE$ une représentation continue irréductible fixée.
L'ensemble $\D(\rhobar)$ des poids de Serre de $\rhobar$ d\'efini dans \cite[\S\ 3]{BDJ} (voir aussi Proposition A.3 de \cite{Br2}) est l'ensemble des poids de Serre (la puissance du d\'eterminant fix\'ee est sous-entendue, pour des pr\'ecisions voir \cite{BP}):
$$\bigotimes_{\tau\in \SSS}(\Sym^{r_{\tau}}k_E^2)^{\tau}$$
avec $(r_\tau)_{\tau\in \SSS}\in\{0,\ldots, p-1\}^f$ pour lesquels il existe $\SSS_1\subset \SSS'$ tel que $|\SSS_1|=f$, $\SSS=\{\tau'_{|F},\tau'\in\SSS_1\}$ et $\chi'$ un caract\`ere de l'inertie qui s'\'etend \`a $\g$ tels que
$$
\rhobar_{|\ii}\cong\begin{pmatrix}\prod_{\tau'\in \SSS_1}\omega_{\tau'}^{(r_{\tau'_{|F}}+1)}&0\\0&
\prod_{\tau'\in \SSS' \setminus \SSS_1}\omega_{\tau'}^{(r_{\tau'_{|F}}+1)}\end{pmatrix}\otimes\chi'.
$$

Explicitons un algorithme pratique pour obtenir $\D(\rhobar)$. Soit $\rhobar: \g\longrightarrow\GbarE$ une repr\'esentation continue irr\'eductible telle que 
$$\rhobar_{|\ii}\cong\Big(\omega_{\tau'_0}^{\sum_{j=0}^{f-1}(r_j+1)p^j}\oplus\omega_{\tau'_0}^{p^f\sum_{j=0}^{f-1}(r_j+1)p^j}\Big)\otimes \chi'$$
pour un plongement $\tau'_0\in \SSS'$ fix\'e avec la famille $(r_j)_{0\leq j\leq f-1}$ dans $\{0,\ldots,p-1\}\times\{-1,\ldots,p-2\}^{f-1}$.
 L'ensemble des poids de Serre de $\rhobar$ est l'ensemble des $f$-uplets $(r'_j)_{0\leq j\leq f-1}\{0,\ldots,p-1\}^f$ satisfaisant une congruence de la forme
\begin{equation}
\label{congruencef}
\sum_{j=0}^{f-1}\pm (r_j'+1)p^j\equiv \sum_{j=0}^{f-1}(r_j+1)p^j \mod (p^f+1)
\end{equation}
pour un des $2^f$ choix de signes possibles dans le membre de gauche.
Si $\rhobar$ est g\'en\'erique (\cite{BP}, Définition 11.7), autrement dit si 
la famille $(r_j)_{0\leq j\leq f-1}$ est dans $\{1,\ldots,p-2\}\times\{0\ldots,p-3\}^{f-1}$, 
nous obtenons ainsi $2^f$ $f$-uplets distincts (correspondants aux 
$2^f$ choix de signes dans la congruence (\ref{congruencef})) qui 
s'expriment formellement en fonction des $(r_j)_{0\leq j\leq f-1}$ 
(\cite{BP}, Lemma 11.4). Si $\rhobar$ n'est pas g\'en\'erique, il est 
possible que les formules donnant les $2^f$ $f$-uplets dans le cas 
g\'en\'erique donnent des $f$-uplets n'appartenant pas \`a 
$\{0,\ldots,p-1\}^f$. Il faut alors revenir \`a la congruence 
(\ref{congruencef}) pour modifier convenablement le $f$-uplet pour 
d\'eterminer le poids de Serre, \'el\'ement de $\{0,\ldots,p-1\}^{f}$ 
associ\'e \`a $\rhobar$ correspondant \`a ce $f$-uplet. Dans \cite{Da}, A. David donne 
une description plus directe de l'ensemble $\DD(\rhobar)$ et une formule explicite pour la modification à apporter dans le cas non générique.

\begin{definit}\label{defpdmod}
Soit $\rhobar:G_F\longrightarrow\GbarE$ une repr\'esentation continue irr\'eductible non g\'en\'erique.
Les poids de Serre de $\rhobar$ qui s'obtiennent apr\`es modification des formules donnant les poids de Serre dans le cas g\'en\'erique sont dits \emph{modifiés}.
\end{definit}

Pour $f=2$, notons $\omega_4=\omega_{\tau'_0}$ le caractère fondamental de 
niveau $4$.
Il y a quatre cas possibles de représentations irréductibles 
non génériques.  Nous indiquons pour chacune ses poids de Serre, en 
commen\c{c}ant par l'unique poids modifié et continuant par son 
symétrique (voir \cite{Da}).
Dans tous les cas ci-dessous, $s$ est un entier dans $\Z$ et $\theta$ un élément de $k_E^{\times}$.
 \begin{enumerate}[(i)]
\item 
$\rhobar \simeq \Ind^{G_{F}}_{G_{F'}}\left(\omega_4^{ 1 + r_0} \cdot \nr' (\theta)    \right) \otimes \omega_2^s$
avec $1\leq r_0 \leq p-2$~;\\
$\begin{array}{l l c l}
\DD(\rhobar) = \{ &   (r_0 + 1 , p-1) \otimes \det^{-1} & , & (p-2 - r_0,0) \otimes \det^{-(p-1-r_0)} , \\
 & (p-1-r_0,p-2)\otimes \det^{r_0} & , & (r_0 - 1 , p-1)\otimes \mathds{1}) \} \otimes \det^s. \\
\end{array}
$
\item 
$\rhobar=  \Ind^{G_{F}}_{G_{F'}} \left( \omega_4 \cdot \nr' (\theta) \right) \otimes \omega_2^s$~;\\
$\DD(\rhobar) = \left\{ (1,p-1) \otimes \det^{-1} , (p-2, 0) \otimes \det^{ -(p-1)} , (p-1, p-2) \otimes \mathds{1} \right\} \otimes {\det}^s .$
\item 
$\rhobar \simeq \Ind^{G_{F}}_{G_{F'}} \left( \omega_4^{p( 2 + r_1)} \cdot \nr' (\theta)  \right) \otimes \omega_2^s$ avec $0\leq r_1 \leq p-3$ ;\\
$\begin{array}{l l c l}
\DD(\rhobar) = \{ &  (p-1,r_1+2) \otimes \det^{-p}  & , &  (0,p-3-r_1)  \otimes \det^{p-1 + p(1+r_1)} , \\
 & (p-2,p-2-r_1)  \otimes \det^{p(1+r_1)} & , & (p-1,r_1) \otimes \mathds{1}  \}\otimes {\det}^s. \\
\end{array}$
\item 
$\rhobar \simeq \Ind^{G_{F}}_{G_{F'}} \left( \omega_4^{ p} \cdot \nr' (\theta)  \right) \otimes \omega_2^s$~;\\
$\DD(\rhobar) = \left\{ (p-1,1) \otimes \det^{-p} , (0, p-2) \otimes \det^{p-1} , (p-2, p-1) \otimes \mathds{1} \right\}  \otimes {\det}^s . $
\end{enumerate}

Dans les cas (i) et (iii) de la liste ci-dessus, un seul des deux entiers $(r_0 , r_1)$ prend une valeur non g\'en\'erique, à savoir $0$ ou $p-1$ pour $r_0$ et $-1$ ou $p-2$ pour $r_1$.
Dans les cas (ii) et (iv), les entiers $r_0$ et $r_1$ sont tous les deux non g\'en\'eriques et nous appelons \textit{totalement non g\'en\'eriques} les repr\'esentations associ\'ees.

\subsubsection{Poids du type}
\label{sssec:poidstype}

Fixons deux caractères distincts modérément ramifiés $\eta,\eta':\ii \rightarrow \oE^{\times}$ qui s'\'etendent \`a $G_F$ et considérons le type galoisien $\ttt =\eta\oplus\eta'$. Les conditions sur $\eta, \eta'$ impliquent qu'ils se factorisent par $\Gal(F^{\nr}[\sqrt[q-1]{-p}]/F^{\nr})=\Gal(F[\sqrt[q-1]{-p}]/F)\simeq \Fq^{\times}$ (cf. (\ref{fondamental})) et qu'il y a donc une mani\`ere naturelle de les \'etendre \`a $\g$ (\emph{via} la th\'eorie du corps de classes local, cela revient juste \`a envoyer $p\in \widehat F^{\times}$ vers $1$). Notons $\I(\oF)$ le sous-groupe d'Iwahori de $\K$  des matrices triangulaires sup\'erieures modulo $p$.

Le type $\sigma(\ttt)$ associé par \cite{He} à $\ttt$ est alors $\ind_{\I(\oF)}^{\K}(\eta'\otimes\eta)$ o\`u $\eta'\otimes\eta: \I(\oF)\rightarrow \oE^{\times}$ est le caractère :
\begin{eqnarray*}
\begin{pmatrix} a & b\\ pc & d\end{pmatrix}\mapsto \eta'(\overline a)\eta(\overline d)
\end{eqnarray*}
et  $\ind_{\I(\oF)}^{\K}(\eta'\otimes\eta)$ est le $E$-espace vectoriel des fonctions $f:\K\rightarrow E$ telles que :
$$\forall k\in \I(\oF), \, k'\in \K, \;\; f(kk')=(\eta'\otimes\eta)(k)f(k')$$
 muni de l'action à gauche de $\K$ par translation à droite sur les 
fonctions. Cette action se factorise par $\GL_2(\Fq)$.

Notons de m\^eme $\ind_{\I(\oF)}^{\K}(\overline\eta'\otimes\overline\eta)$ le $k_E$-espace vectoriel des fonctions $f:\K\rightarrow k_E$ telles que $f(kk')=(\overline\eta'\otimes\overline\eta)(k)f(k')$ muni de la m\^eme action de $\GL_2(\Fq)$. C'est la réduction modulo $\pE$ du $\oE$-réseau de $\sigma(t)$ des fonctions à valeurs dans $\oE$. Nous notons $\D(\ttt)$ l'ensemble des poids de Serre de $F$ qui sont des constituants irréductibles de $\ind_{\I(\oF)}^{\K}(\overline\eta'\otimes\overline\eta)$.
L\`a encore, nous disposons d'une détermination de $\D(\ttt)$ d\^ue \`a Breuil et Pa\v{s}k${\overline{\rm u}}$nas (\cite{BP} \S2 lemme 2.2) et d'un algorithme combinatoire dû à David (\cite{Da}).
Notamment pour $f=2$, l'ensemble des poids de Serre qui sont des constituants de $\ind_{\I(\oF)}^{\K}\overline\eta'\otimes\overline\eta$ avec $\overline\eta=\omega_{\tau_0}^{c_0+pc_1}\overline\eta'$ ($0\leq c_0,c_1\leq p-1$)
 sont les couples de $\{0,\ldots,p-1\}^2$ apparaissant dans la liste de couples suivante :
\begin{equation}(c_0,c_1) \otimes \overline\eta',
\begin{array}{cc}
(p-2-c_0,c_1-1) \otimes \det^{c_0 + 1} \overline \eta'  \cr
 (c_0-1,p-2-c_1) \otimes \det^{p(c_1 + 1)}  \overline\eta'  \cr
 \end{array},
(p-1-c_0,p-1-c_1)  \otimes {\det}^{c_0 + pc_1} \overline\eta' .
\end{equation}

Dans la partie \ref{sec:degre2}, nous considérons des contraintes de déformations $(\vv_0,\ttt)$, avec $\vv_0 = ((0,2))_{\tau \in \SSS}$ et $\ttt$ comme ci-dessus.
Dans ce cadre, la représentation $\sigma(\vv_0)$ est triviale.
L'ensemble $\D(\vv_0,\ttt) $ est donc l'ensemble $\D(\ttt) $ ;  la multiplicité $m_{\vv_0, \ttt}(\sigma)$ est $1$ pour tout $\sigma$ dans $\D(\ttt)$, $0$ sinon.

\section{Outils de théorie de Hodge $p$-adique}
\label{sec:outils}

Les méthodes que nous allons utiliser par la suite pour déterminer 
explicitement certains espaces de déformations $R^\psi(\vv, \ttt, \rhobar)$
sont purement locales. Elles sont basées essentiellement sur la théorie 
des modules de Breuil--Kisin (\cite{Br-Corps}, \cite{Ki-Crys}, 
\cite{Ki-Moduli}, \cite{Ki2}, \cite{Ki3}). Cette seconde partie est 
consacrée à présenter succintement et à compléter sur certains points 
les résultats principaux de la théorie de Breuil--Kisin que nous serons
amenés à utiliser couramment dans la suite.

\subsection{Rappels sur la théorie de Breuil--Kisin}

Soit $K$ une extension finie de $\Qp$\footnote{Plus généralement, le 
contenu de cette partie \S \ref{sec:outils} s'étend sans modification à 
un corps complet pour une valuation discrète, de caractéristique mixte et 
de corps résiduel parfait.}. La théorie de Breuil--Kisin permet d'étudier 
les représentations semi-stables de~$G_K$. Pour l'exposer, nous avons
besoin d'introduire quelques notations supplémentaires :
\begin{itemize}
\item $K_0$, l'extension maximale non ramifiée de $\Qp$ 
incluse dans $K$ ;
\item $f = [K_0 : \Qp]$ le degré résiduel de $K$ ;
\item $\varphi$, l'endomorphisme de Frobenius agissant sur 
$K_0$ ;
\item $W$, l'anneau des entiers de $K_0$ ; %
\item $E(u)$, le polynôme minimal de $\pK$ (une uniformisante fixée de $K$) sur $K_0$ ;
\item $(\pi_s)_{s \in \N}$, un système compatible de racines $p^s$-ièmes de $\pK$ dans $\Kbar$ ;
\item $\Koo$, l'extension de $K$ engendrée par les $\pi_s$ pour $s$ dans $\N$ ;
\item $\Goo = \Gal(\Kbar / \Koo)$ ;
\item les anneaux $\SK = W [[ u]]$ et $\ocE$ défini comme le complété $p$-adique de $\SK[1/u]$,
$$
\ocE = \left\{ \sum_{i \in \Z} a_i u^i \;  \Big|\;  a_i \in W, \lim_{i \to - \infty} a_i = 0 \right\} ;
$$
nous les munissons d'un endomorphisme de Frobenius, que par abus nous 
notons encore $\varphi$, défini par :
$$\varphi\Big(\sum a_i u^i\Big) = \sum \varphi(a_i) u^{pi}.$$
\end{itemize}
Remarquons que $\ocE$ contient $\SK$ et que le Frobenius défini 
ci-dessus est compatible à cette inclusion. Notons également que 
$\ocE$ est un anneau de valuation discrète (pour la valuation $p$-adique)
complet et que son corps résiduel s'identifie canoniquement à $k_K((u))$.

\subsubsection{Classification des représentations de $\Goo$} 
\label{sssec:classifGoo}

D'après la théorie du corps des normes de Fontaine et Wintenberger (voir 
\cite{Wi}), le groupe de Galois $\Goo$ s'identifie au groupe de Galois 
absolu d'un corps de séries de Laurent en une variable à coefficients 
dans le corps résiduel $k_K$. Autrement dit, nous avons un isomorphisme 
(qui peut être rendu canonique)
\begin{equation}
\label{eq:corpsnormes}
\Goo \simeq \Gal(k_K((u))^\sep / k_K((u)))
\end{equation}
où $k_K((u))^\sep$ désigne une clôture séparable de
$k_K((u))$.
Notons $\ocEnr$ l'extension étale (infinie) de~$\ocE$ correspondant à 
l'extension résiduelle $k_K((u))^\sep / k_K((u))$. Le groupe de Galois 
absolu de $k_K((u))$ agit naturellement sur $\ocEnr$. Il en va donc de
même de $\Goo$ grâce à l'isomorphisme \eqref{eq:corpsnormes}.

\begin{definit}\label{defi-phi-module}
Soit $R$ une $\Zp$-algèbre locale complète. 

Un \emph{$\varphi$-module} sur $R \hat\otimes_{\Zp} \ocE$ est un
$(R \hat\otimes_{\Zp} \ocE)$-module $M$ libre de rang fini muni d'une 
application $\varphi_M : M \to M$ qui est linéaire par rapport à $R$
et $\varphi$-semi-linéaire par rapport à $\ocE$.

Le $\varphi$-module $M$ est dit \emph{étale} si l'image de $\varphi_M$ 
engendre $M$ comme $(R \hat\otimes_{\Zp} \ocE)$-module.
\end{definit}

\begin{rem}
Étant donné $\varphi_M$ comme dans la définition \ref{defi-phi-module}, il est
souvent pratique de travailler avec son \og linéarisé \fg :
$$\Id \otimes \varphi_M : (R \hat\otimes_{\Zp} \ocE) \otimes_{\varphi, 
R \hat\otimes_{\Zp} \ocE} M \longrightarrow M$$
où l'endomorphisme $\varphi$ de $R \hat\otimes_{\Zp} \ocE$ qui apparaît
ci-dessus agit sur $R$ par l'identité et sur $\ocE$ par le Frobenius.
L'application $\Id \otimes \varphi_M$ est alors $(R \hat\otimes_{\Zp} 
\ocE)$-linéaire et le $\varphi$-module $M$ est étale si et seulement si
$\Id \otimes \varphi_M$ est un isomorphisme.
\end{rem}

Nous avons le résultat suivant qui est une variante à coefficients 
d'un théorème classique de Fontaine (voir \cite{Fo2}) :

\begin{thm}\label{thm:equivFontaine}
Pour toute $\Zp$-algèbre locale complète noetherienne $R$, le foncteur
$$
V \mapsto \bbM^\star(V) = ( V^\star \hat\otimes_{\Zp} \ocEnr )^{\Goo}
$$
réalise une équivalence de catégories entre la catégorie des 
$R$-représentations libres de rang fini de $\Goo$ et celle des 
$\varphi$-modules\footnote{La structure de $\varphi$-module sur
$( V^\star \hat\otimes_{\Zp} \ocEnr )^{\Goo}$ est donnée par
l'application $\Id \otimes \varphi$.} étales sur $R \hat\otimes_{\Zp} 
\ocE$.
\end{thm}

\begin{rem}
Le foncteur $\bbM^\star$, comme toutes les constructions présentées dans ce 
paragraphe, commute au changement de base.
\end{rem}

\paragraph*{Le cas d'une $W$-algèbre}

Si l'anneau des coefficients $R$ est une $W$-algèbre, nous disposons 
d'une variante du théorème \ref{thm:equivFontaine} obtenue
essentiellement en remplaçant $\varphi$ par $\varphi^f$.

\begin{definit}
Soit $R$ une $W$-algèbre locale complète.

Un \emph{$\varphi^f$-module} sur $R \hat\otimes_W \ocE$ est un $(R
\hat\otimes_W \ocE)$-module $M$ libre de rang fini muni d'une application 
$\varphi^{(f)}_M : M \to M$ qui est linéaire par rapport à $R$ et
$\varphi^f$-semi-linéaire par rapport $\ocE$.

Le \emph{$\varphi^f$-module} $M$ est dit \emph{étale} si l'image de 
$\varphi^{(f)}_M$ engendre $M$ comme $(R \hat\otimes_W \ocE)$-module.
\end{definit}

\begin{thm} \label{thm:equivrepPhimod}
Pour toute $W$-algèbre locale complète noetherienne $R$, le foncteur
$$
V \mapsto \bbM^\star_W(V) = ( V^\star \hat\otimes_W \ocEnr )^{\Goo}
$$
réalise une équivalence de catégories entre la catégories des 
$R$-représentations libres de rang fini de $\Goo$ et celle des 
$\varphi^f$-modules\footnote{La structure de $\varphi^f$-module sur 
$(V^\star \hat\otimes_W \ocEnr)^{\Goo}$ est donnée par l'application 
$\Id \otimes \varphi^f$.} étales sur $R \hat\otimes_W \ocE$.
\end{thm}

Si $R$ est une $W$-algèbre et $V$ est une $R$-représentation libre de 
rang fini de $\Goo$, nous disposons donc simultanément des deux objets 
$\bbM^\star(V)$ et $\bbM^\star_W(V)$ que nous allons comparer. Comme $R$ est 
une $W$-algèbre, nous avons une décomposition canonique de l'anneau
$R \hat\otimes_{\Zp} W$ :
\begin{eqnarray*}\label{decomposition}
R \hat\otimes_{\Zp} W & \simeq & \textstyle \prod^{f-1}_{i = 0} R \\
y \otimes x & \mapsto & (\varphi^{-i}(x) \cdot y)_{0 \leq i \leq f-1}.
\end{eqnarray*}
En tensorisant par $\ocE$ sur $W$, nous en déduisons un isomorphisme 
canonique :
\begin{equation}
\label{eq:decompocER}
R \hat\otimes_{\Zp} \ocE \simeq 
\textstyle \prod^{f-1}_{i \in 0} 
R\hat\otimes_{\iota \circ \varphi^{-i}, W} \ocE.
\end{equation}
où $\iota$ désigne le morphisme structurel de $R$ comme $W$-algèbre.
Concrètement, le $i$-ième facteur $R \hat\otimes_{\iota \circ 
\varphi^{-i}, W} \ocE$ admet la description explicite suivante :
\begin{equation}
\label{eq:ocERexpl}
R \hat\otimes_{\iota \circ \varphi^{-i}, W} \ocE \,\, \simeq \,\,
\Bigg\{ \sum_{j \in \Z} a_j u^j \quad  \Big|\quad  a_j \in R, \lim_{j \to - \infty} a_j = 0 \Bigg\}
\end{equation}
l'identification faisant correspondre le tenseur pur $\lambda \otimes
(\sum a_j u^j)$ avec la série $\lambda \sum \varphi^{-i} (a_j) \:u^j$. 
Avec ce choix, l'isomorphisme \eqref{eq:ocERexpl} est compatible à la fois à la
multiplication par les éléments de $W$ agissant sur le facteur $\ocE$
et à la multiplication par les éléments de $R$ agissant sur le facteur
$R$.

Soit $e_i$ l'idempotent de $R \hat\otimes_{\Zp} \ocE$ correspondant au 
$i$-ième facteur de la décomposition \eqref{eq:decompocER}. Si $M$ est un 
module sur $R \hat\otimes_{\Zp} \ocE$, en posant $M^{(i)} = e_i M$, nous 
avons une décomposition canonique de $M$ :
\begin{equation}\label{decompositionmodule}
M = M^{(0)} \oplus M^{(1)} \oplus \cdots \oplus M^{(f-1)}.
\end{equation}

Examinons à présent le comportement du Frobenius. En revenant aux 
définitions, nous nous apercevons que l'action de $\Id \otimes \varphi$ 
sur $R \hat\otimes_{\Zp} \ocE$ correspond à l'endomorphisme de 
$\prod^{f-1}_{i = 0} R \hat\otimes_{W, \iota \circ \varphi^{-i}} \ocE$ 
suivant :
$$(s_0(u), s_1(u), \ldots, s_{f-1}(u)) \mapsto
(s_{f-1}(u^p), s_0(u^p), s_1(u^p), \ldots, s_{f-2}(u^p))$$
où les $s_i(u)$ désignent des séries à coefficients dans $R$, éléments
du membre de droite de l'isomorphisme \eqref{eq:ocERexpl}. Nous en déduisons
que, si $M$ est un $\varphi$-module sur $R \hat\otimes_{\Zp} \ocE$,
l'endomorphisme $\varphi_M$ envoie $M^{(i)}$ sur $M^{(i+1 \text{ mod } f)}$ 
pour $i$ compris entre $0$ et $f-1$. Notons en particulier que la
$f$-ième puissance de $\varphi_M$ stabilise chacun des $M^{(i)}$. La proposition suivante est immédiate.

\begin{prop}\label{propnomfarfelu}
Soient $R$ une $W$-algèbre locale complète et $V$ une $R$-représentation 
libre de rang fini de $\Goo$. Alors, avec les notations introduites 
précédemment, nous avons un isomorphisme canonique de $\varphi^f$-modules 
sur $R \hat\otimes_W \ocE$ :
$$\bbM^\star_W(V) = \bbM^\star(V)^{(0)}$$
où $\bbM^\star(V)^{(0)}$ est muni d'une structure de $\varphi^f$-module
sur $R \hat\otimes_W \ocE$ \emph{via} l'endomorphisme $\varphi_{\bbM^\star(V)}
\circ \varphi_{\bbM^\star(V)} \circ \cdots \circ \varphi_{\bbM^\star(V)}$ ($f$ fois).
\end{prop}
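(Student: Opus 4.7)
The plan is to chase through the decomposition \eqref{eq:decompocER} and check that everything in sight commutes with taking $\Goo$-invariants and with the Frobenius operators.

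First, I would observe that the idempotents $e_0, \ldots, e_{f-1}$ defining the decomposition \eqref{eq:decompocER} live already in $R \hat\otimes_{\Zp} W$, and hence lie in $R \hat\otimes_{\Zp} \ocE$, which injects into $R \hat\otimes_{\Zp} \ocEnr$. Since the action of $\Goo$ on $\ocEnr$ fixes pointwise the subring $W \subset \ocE$, these idempotents are $\Goo$-invariant. Consequently, for any $R$-representation $V$ of $\Goo$, the decomposition $V^\star \hat\otimes_{\Zp} \ocEnr = \bigoplus_{i=0}^{f-1} e_i (V^\star \hat\otimes_{\Zp} \ocEnr)$ is $\Goo$-stable, and taking $\Goo$-invariants commutes with the splitting into components.

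Second, I would identify the component $i = 0$. By construction of the isomorphism \eqref{eq:decompocER}, the factor indexed by $i = 0$ is precisely $R \hat\otimes_{\iota, W} \ocE = R \hat\otimes_W \ocE$, so that $e_0 \cdot (V^\star \hat\otimes_{\Zp} \ocEnr) = V^\star \hat\otimes_W \ocEnr$ as $R \hat\otimes_W \ocE$-modules. Combining with the previous step, I get a canonical identification
$$\bbM^\star(V)^{(0)} = e_0 \bigl(V^\star \hat\otimes_{\Zp} \ocEnr\bigr)^{\Goo} = \bigl(V^\star \hat\otimes_W \ocEnr\bigr)^{\Goo} = \bbM^\star_W(V)$$
at the level of $R \hat\otimes_W \ocE$-modules.

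Third, I would verify that the Frobenius structures agree. By the computation recalled just before the proposition, the endomorphism $\varphi_{\bbM^\star(V)} = \Id \otimes \varphi$ cyclically permutes the components $M^{(i)} = \bbM^\star(V)^{(i)}$, sending $M^{(i)}$ into $M^{(i+1 \bmod f)}$. Thus its $f$-th iterate stabilizes $M^{(0)}$ and endows it with a $\varphi^f$-semi-linear endomorphism over $\ocE$, which is $R$-linear and preserves the $R \hat\otimes_W \ocE$-module structure; moreover its image generates $M^{(0)}$ as soon as the image of $\varphi_{\bbM^\star(V)}$ generates $\bbM^\star(V)$ (which is ensured by Theorem~\ref{thm:equivFontaine}), so the resulting $\varphi^f$-module on $M^{(0)}$ is \'etale. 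Under the identification of the previous step, this Frobenius corresponds exactly to $\Id \otimes \varphi^f$ on $V^\star \hat\otimes_W \ocEnr$, i.e.\ to the $\varphi^f$-module structure on $\bbM^\star_W(V)$ furnished by Theorem~\ref{thm:equivrepPhimod}. No step here is a genuine obstacle: the entire argument is a formal bookkeeping of the decomposition \eqref{eq:decompocER}, and the only point one must be slightly careful about is that the idempotents $e_i$ are defined from $R \hat\otimes_{\Zp} W$ and not from $\ocEnr$, which is precisely what makes them $\Goo$-invariant.
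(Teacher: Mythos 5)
The paper gives no proof at all — it declares the proposition ``imm\'ediate'' after having set up the decomposition \eqref{eq:decompocER}, the idempotents $e_i$, and the observation that $\varphi_{\bbM^\star(V)}$ cyclically permutes the components. Your argument is exactly the unpacking of that assertion, and it is correct: the key points you isolate (the idempotents live in $R\hat\otimes_{\Zp}W$ and are therefore $\Goo$-invariant, so projection onto the $i=0$ factor commutes with taking invariants; the $i=0$ factor of $V^\star\hat\otimes_{\Zp}\ocEnr$ is canonically $V^\star\hat\otimes_W\ocEnr$; and the induced Frobenius on $M^{(0)}$ is $\Id\otimes\varphi^f$, with the \'etaleness carried over from the \'etaleness of $\bbM^\star(V)$) are precisely the facts the paper is implicitly invoking. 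Your proof takes the same route the paper intends, just written out in full.
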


\subsubsection{Modules de Breuil--Kisin}

Lorsque $V$ est une représentation semi-stable de $G_K$ à coefficients 
dans l'anneau des entiers d'une extension finie de $\Qp$, le 
$\varphi$-module $\bbM^\star(V)$ associé à la restriction de $V$ à $\Goo$ par
le théorème \ref{thm:equivFontaine} a une forme particulière qui rend
son étude plus facile. Dans ce paragraphe, nous introduisons la notion de
module de Breuil--Kisin qui rend compte de cette forme particulière
agréable.

\begin{definit}\label{defmoduleBK}
Soit $R$ une $\Zp$-algèbre locale noetherienne complète.

Un \emph{module de Breuil--Kisin} sur $R \hat\otimes_{\Zp} \SK$ est
la donnée d'un $(R \hat\otimes_{\Zp} \SK)$-module de type fini $\MK$ 
et d'un endomorphisme $\varphi_\MK : \MK \to \MK$ vérifiant les 
conditions suivantes :
\begin{enumerate}[(i)]
\item le module $\MK$ n'a pas de $u$-torsion ;
\item l'application $\varphi_\MK$ est linéaire par rapport à 
$R$ et $\varphi$-semi-linéaire par rapport à $\SK$ ;
\item \label{item:hauteurfinie}
le $(R \hat\otimes_{\Zp} \SK)$-module engendré par l'image
de $\varphi_\MK$ contient $E(u)^r \MK$ pour un certain 
entier $r$ positif ou nul.
\end{enumerate}
Le plus petit entier $r$ vérifiant la condition 
\eqref{item:hauteurfinie} est appelé la \emph{$E(u)$-hauteur} de $\MK$.
\end{definit}

\begin{rem} 
De même que pour un $\varphi$-module, il est souvent commode, lorsque
$\MK$ est un module de Breuil--Kisin, de considérer le linéarisé de
$\varphi_\MK$ :
$$\Id \otimes \varphi_\MK :
(R \hat\otimes_{\Zp} \SK) \otimes_{\Id \otimes \varphi, 
R \hat\otimes_{\Zp} \SK} \MK \longrightarrow \MK.$$
qui est une application $(R \hat\otimes_{\Zp} \SK)$-linéaire. La
condition \eqref{item:hauteurfinie} de la définition \ref{defmoduleBK} est
alors équivalente au fait que le conoyau de $\Id \otimes \varphi_\MK$
est annulé par $E(u)^r$.

Lorsque $R$ est de plus une $W$-algèbre, la décomposition de $(R \hat\otimes_{\Zp} \SK)$ analogue à 
\eqref{decomposition} induit une décomposition
\begin{equation}
\label{decompBKmodule}
\MK=\MK^{(0)}\otimes\ldots\otimes\MK^{(f-1)}
\end{equation}
analogue à \eqref{decompositionmodule}.
\end{rem}

\begin{definit}
Soit $R$ une $\Zp$-algèbre locale noetherienne complète et $h$ un entier naturel. 
Une $R$-représentation $V$ de $G_K$ libre de 
rang fini est dite de \emph{$E(u)$-hauteur inférieure ou égale à $ h$} s'il existe un 
module de Breuil--Kisin $\MK$ sur $R \hat\otimes_{\Zp} \SK$ de hauteur inférieure ou égale à
$ h$ tel que nous ayons un isomorphisme de $\varphi$-modules :
$$\bbM^\star(V) \simeq \ocE \otimes_{\SK} \MK,$$
la structure de $\varphi$-module sur $\ocE \otimes_{\SK} \MK$ étant
donnée par $\varphi \otimes \varphi_\MK$.
\end{definit}

\begin{thm}[Kisin]
\label{thm:Eusst}
Soit $R$ une $\Zp$-algèbre locale noetherienne complète que nous
supposons de plus plate sur $\Zp$.
Soit $V$ une représentation de $G_K$ à coefficients dans $R$ qui est 
libre de rang fini. Nous supposons que pour tout morphisme d'anneaux $R 
\to \Qpbar$, la représentation $\Qpbar \otimes_R V$ est semi-stable
à poids de Hodge--Tate compris entre $0$ et $h$. Alors :
\begin{enumerate}[(i)]
\item le $\varphi$-module associé à $V_{|\Goo}$ par le 
théorème~\ref{thm:equivFontaine} est de $E(u)$-hauteur $\leq h$ ;
\item \label{item:uniciteBK}
si $\MK_1$ et $\MK_2$ sont deux modules de Breuil--Kisin tels que :
$$\bbM^\star(V_{|\Goo}) \simeq \ocE \otimes_{\SK} \MK_1 \simeq
 \ocE \otimes_{\SK} \MK_2$$
alors le second isomorphisme ci-dessus identifie $\MK_1$ à 
$\MK_2$ à l'intérieur de $\bbM^\star(V_{|\Goo})$.
\end{enumerate}
\end{thm}

\begin{proof}
En reprenant les notations de \cite{Ki3}, les hypothèses du théorème
assurent que $R = R^{\leqslant h}$. La 
première assertion suit ainsi du corollaire 1.7 de \emph{loc. cit.}

La seconde assertion, quant à elle, suit de la proposition 2.1.12
de \cite{Ki-Crys}.
\end{proof}

Les représentations cristallines étant en particulier semi-stables, le 
théorème \ref{thm:Eusst} vaut \emph{a fortiori} si les représentations 
$\Qpbar \otimes_R V$ sont cristallines pour tout morphisme d'anneaux 
$R\rightarrow \Qpbar$. Dans ce cas, la situation est encore meilleure car 
le théorème de pleine fidélité de Kisin (voir corollaire 2.1.14 de 
\cite{Ki-Crys}) implique que la représentation $V$ de $G_K$ est entièrement caractérisée par sa 
restriction à $\Goo$. Par ailleurs, soulignons qu'il n'est pas vrai, en 
général, qu'une représentation de $E(u)$-hauteur finie de $\Goo$ se 
prolonge en une représentation cristalline de $G_K$. Toutefois, ceci se 
produit dans le cas particulier notable des représentations de 
$E(u)$-hauteur inférieure ou égale à $ 1$, comme le précise le théorème suivant.

\begin{thm}
\label{thm:hauteur1}
Toute $R$-représentation libre de rang fini de $\Goo$ de $E(u)$-hauteur 
inférieure ou égale à $1$ se prolonge de manière canonique à $G_K$. Ce prolongement est
fonctoriel et commute aux changements de base.

De plus, si $R$ est l'anneau des entiers d'une extension finie de $E$,
ce prolongement est caractérisé par le fait qu'il soit cristallin.
\end{thm}

\begin{proof}
L'existence du prolongement et son caractère \og canonique \fg\ découlent 
d'une variante à coefficients du lemme 5.1.2 de \cite{BCDT} après une 
traduction entre le langage des modules de Breuil--Kisin et celui des 
modules fortement divisibles de Breuil.

La caractérisation par le caractère cristallin est, quant à elle, une
conséquence du corollaire 2.1.14 de \cite{Ki-Crys}.
\end{proof}

\subsubsection{Données de descente}
\label{ssec:descente}

Une représentation de $G_K$ est dite \textit{potentiellement 
semi-stable} si elle devient semi-stable après restriction à un 
sous-groupe d'indice fini, c'est-à-dire après restriction à un sous-groupe de la forme $G_L 
= \Gal(\Kbar/L)$, où $L$ est une extension finie de $K$.
Une approche usuelle pour étudier une telle représentation $V$ à l'aide 
de la théorie de Kisin est de munir le module de Breuil--Kisin de $V_{| 
G_L}$ (qui est semi-stable) d'une donnée supplémentaire, appelée 
\textit{donnée de descente}, qui rend compte du prolongement de l'action 
à $G_K$.

À partir de maintenant, nous nous restreignons au cas où $L$ est de la 
forme $L = K(\sqrt[n]{\pK})$ pour un certain entier $n$ premier à $p$. 
Nous supposons de plus que $K$ contient une racine primitive $n$-ième de 
l'unité. L'extension $L/K$ est alors galoisienne et son groupe de Galois 
est canoniquement isomorphe au groupe $\mu_n(K)$ des racines $n$-ièmes de 
l'unité dans $K$. Notons $\eta_n : \Gal(L/K) \to \mu_n(K)$ cet 
isomorphisme.

Dans la partie \ref{sec:methode}, nous considérerons notamment la situation suivante :
$F$ est l'extension non ramifiée de degré $f$ de $\Qp$ ; l'uniformisante $\varpi_F$ choisie est $-p$ ; $n$ est $p^f - 1$.
Le caractère $\eta_n$ est alors le caractère $\varepsilon_f$ introduit dans la partie \ref{sssec:notations}.

Nous ajoutons un indice $K$ ou $L$ à toutes les données associées aux 
corps de base $K$ et $L$ respectivement. Par exemple, nous notons $u_K$ et $u_L$ les deux versions de la variable que nous
notions $u$ jusqu'alors et nous utilisons les notations $\SK_K$ et $\SK_L$ 
(resp. $\ocEK$ et $\ocEL$) pour les deux versions de l'anneau $\SK$ 
(resp. $\ocE$). En particulier, nous avons $\SK_K = W[[u_K]]$ et $\SK_L = 
W[[u_L]]$. Remarquons, qu'étant donné que l'extension $L/K$ est 
totalement ramifiée, l'anneau $W$ est le même que nous travaillions avec 
$K$ ou $L$ ; il est donc inutile de le décorer d'un indice 
supplémentaire.

Nous supposons de plus, à partir de maintenant, que les uniformisantes 
$\pi_K$ et $\pi_L$, ainsi que leurs racines $p^s$-ièmes $\pi_{s,K}$ et 
$\pi_{s,L}$ sont choisies de façon à ce que $\pi_{s,L}^n$ soit $\pi_{s,K}$ 
pour tout entier $s$. Le polynôme minimal $E_L(u_L)$ de $\varpi_L$ sur 
$K_0$ est donc égal à $E_K(u_L^n)$. Nous identifions $\SK$ (resp. $\ocEK$) à 
un sous-anneau de $\SK_L$ (resp. $\ocEL$) en envoyant la variable $u_K$ 
sur $u_L^n$. \emph{Via} cette identification, les polynômes $E_K(u_K)$ 
et $E_L(u_L)$ se correspondent. Par ailleurs, l'extension $\Loo$ 
(obtenue en ajoutant à $L$ tous les $\pi_{s,L}$) est galoisienne sur
$\Koo$ et son groupe de Galois s'identifie à celui de $L/K$ et donc
encore à $\mu_n(K)$ en suivant l'homomorphisme $\omega$.

Définissons une action de $\Gal(L/K) \simeq \Gal(\Loo/\Koo)$ sur $\ocEL$ 
en faisant agir un élément $g$ de $\Gal(L/K)$ sur la variable $u_L$ 
par multiplication par $\eta_n(g)$ considéré comme un élément de
$W$. Concrètement, nous avons :
$$g \cdot \sum_{i \in \Z} a_i u_L^i = \sum_{i \in \Z} 
\eta_n(g)^i a_i u_L^i.$$
Clairement, cette action stabilise $\SK_L$. De plus, en utilisant
que $\eta_n$ est d'ordre exactement $n$, il est facile de déterminer les
points fixes :
$$H^0(\Gal(L/K), \ocEL) = \ocEK \quad \text{et} \quad
H^0(\Gal(L/K), \SK_L) = \SK_K.$$
Si $R$ est une $\Zp$-algèbre locale noetherienne complète, nous prolongeons 
l'action de $\Gal(L/K)$ au produit tensoriel $R \hat\otimes_{\Zp} \ocEL$ en 
convenant que $\Gal(L/K)$ agit trivialement sur $R$. Si $R$ est, de
surcroît, une $W$-algèbre, nous disposons de l'isomorphisme 
\eqref{eq:decompocER} :
$$R \hat\otimes_{\Zp} \ocEL \simeq 
\textstyle \prod^{f-1}_{i \in 0} R \hat\otimes_{\iota \circ 
\varphi^{-i}, W} \ocEL.$$
\emph{Via} cette identification, l'action de $\Gal(L/K)$ se fait
composante par composante et, sur le $i$-ième facteur vu à travers
l'isomorphisme \eqref{eq:ocERexpl}, elle est donnée par la formule :
$$g \cdot \sum_{j \in \Z} a_j u_L^j = \sum_{j \in \Z} 
(\varphi^{-i} \circ \eta_n)(g)^j \: a_j \: u_L^j.$$

Si $V$ une $R$-représentation de $\GooK$ qui est libre de rang fini, nous 
nous apercevons en revenant aux définitions que $\bbM^\star(V_{|\GooL})$ (qui est 
un  $\varphi$-module sur $\ocEL \hat\otimes_{\Zp} R$) hérite d'une 
action semi-linéaire de $\Gal(L/K)$ et que :
\begin{equation}
\label{eq:MKML}
\bbM^\star(V) = H^0(\Gal(L/K), \bbM^\star(V_{|\GooL})).
\end{equation}
Nous en déduisons, en particulier, que le foncteur qui à $V$ comme 
précédemment associe le $\varphi$-module $\bbM^\star(V_{|\GooL})$ muni de
l'action supplémentaire de $\Gal(L/K)$ établit une équivalence de
catégorie entre la catégories des $R$-représentations libres de rang
fini de $\GooK$ et celle des $\varphi$-modules sur $R 
\hat\otimes_{\Zp} \ocEL$ équipés d'une action semi-linéaire de $\Gal(L/K)$.

Examinons maintenant plus attentivement le cas où la représentation
$V_{|\GooL}$ est de $E_L(u_L)$-hauteur finie. Par définition, il existe 
alors un module de Breuil--Kisin $\MK_L$ tel que 
$$\bbM^\star(V_{|G_L}) = \ocEL \otimes_{\SK_L} \MK_L.$$
Si nous supposons de plus que $R$ est plat sur $\Zp$, ce module est 
unique d'après l'alinéa \eqref{item:uniciteBK} du théorème 
\ref{thm:Eusst}. Nous en déduisons qu'il hérite par restriction d'une action 
de $\Gal(L/K)$. Toutefois, l'espace des points fixes $H^0(\Gal(L/K),
\MK_L)$ ne jouit généralement pas de bonnes propriétés\footnote{Typiquement,
ce n'est pas en général un module de Breuil--Kisin et, de fait, la 
représentation $V$ n'a pas de raison d'être de $E_K(u_K)$-hauteur
finie.}. Pour conserver
la trace de la semi-stabilité de $V$, il est donc nécessaire de travailler
avec des modules de Breuil--Kisin munis de données de descente.

\subsection{Sur la restriction à $\Goo$}
\label{ssec:nuisibilite}

La théorie de Breuil--Kisin permet de contrôler de manière efficace 
l'action de $\Goo$ sur une représentation (potentiellement) semi-stable. 
Dans sa version la plus simple --- telle que nous venons de la présenter 
--- elle ne dit toutefois pas grand chose sur l'action complète du groupe 
$G_K$. L'objectif de cette partie est d'étudier, dans les situations qui 
nous intéresserons par la suite, dans quelle mesure nous pouvons nous 
contenter de travailler uniquement avec l'action du sous-groupe $\Goo$.

Fixons une suite compatible $(\zeta_s)_{s \geq 0}$ de racines primitives 
$p^s$-ièmes de l'unité dans $\Kbar$, c'est-à-dire, une suite vérifiant 
$\zeta_0 = 1$, $\zeta_1 \neq 1$ et $\zeta_{s+1}^p = \zeta_s$ pour tout 
entier $s \geq 0$. Ce choix étant fait, pour tout $g \in G_K$, il existe 
un unique élément $c(g)$ de $\Zp$ tel que :
$$g(\pi_s) = \zeta^{c(g)}_s \pi_s
\quad \text{pour tout } s \text{ dans } \N.$$
Cette association définit une application $c : G_K \to \Zp$ vérifiant, 
pour tous $g$ et $g'$ dans $G_K$, la relation de cocycle $c(g g') = c(g) 
+ \epsilon(g) c(g')$ (où nous rappelons que $\epsilon$ désigne le
caractère cyclotomique).

Notons $K(\zeta_{\infty})$ l'extension de $K$ engendrée par tous les 
$\zeta_s$ ; il s'agit d'une extension galoisienne dont le groupe de 
Galois, noté $\Gamma$, s'identifie à un sous-groupe ouvert de 
$\Zp^\times$ \emph{via} $\epsilon$. Le groupe de Galois de l'extension 
$\Koo(\zeta_{\infty}) / K(\zeta_{\infty})$ s'identifie, quant à lui, à 
$\Zp$ \emph{via} le cocycle $c$ (qui, bien sûr, en restriction à 
$\Gal(\Kbar/K(\zeta_{\infty}))$ est un morphisme de groupes). Enfin, le 
lemme 5.1.2 de \cite{Li2} affirme que les extensions $\Koo$ et 
$K(\zeta_{\infty})$ sont linéairement disjointes au-dessus de $K$ 
(rappellons que $p$ est choisi différent de $2$). Nous en déduisons que 
le groupe de Galois de l'extension $\Koo(\zeta_{\infty}) / K$ est 
isomorphe au produit semi-direct $\Zp \rtimes \Gamma$, où $\gamma \in 
\Gamma$ agit sur $\Zp$ par multiplication par $\epsilon(\gamma)$. Soit 
$\tau$ un élément du sous-groupe d'inertie sauvage de $G_K$ dont l'image 
dans $\Zp \rtimes \Gamma$ est $(1,1)$. En particulier, nous avons 
$\epsilon(\tau) = 1$, $c(\tau) = 1$ et la suite des $\tau^{p^s}$ converge 
vers l'identité lorsque $s$ tend vers l'infini. Ce dernier fait permet de 
définir $\tau^a$ pour tout élément $a$ dans $\Zp$.

\subsubsection{Une suite exacte de type inflation-restriction}
\label{sssec:inflrest}

Soit $R$ une $\Zp$-algèbre locale d'idéal maximal $\m_R$ supposée 
noetherienne, séparée et complète pour la topologie définie par $\m_R$. 
Soit encore $V$ une $R$-représentation de $G_K$ qui est de type fini 
comme $R$-module.
Appellons $\tau_V$ l'automorphisme de $V$ donné par l'action de $\tau$
et, pour tout entier $n$ positif ou nul,  posons :
$$[n]_{\tau,V} = \Id_V + \tau_V + \tau_V^2 + \cdots + \tau_V^{n-1}
\in \End(V)$$
Nous avons la relation : %
\begin{equation}
\label{eq:binomequantique}
[a+b]_{\tau,V} = [a]_{\tau,V} + \tau_V^a \circ [b]_{\tau,V}
\end{equation}
pour tous entiers $a$ et $b$.

\begin{lem}\label{lemunique}
L'application 
$$\N \to \End(V), \,\, n \mapsto [n]_{\tau,V}$$
s'étend de manière unique en une application continue $\Zp \to \End(V)$.
\end{lem}

\begin{proof}
L'unicité du prolongement résulte de la densité de $\N$ dans $\Zp$ et
de la séparation de $R$.

Montrons à présent l'existence. Considérons un élément $a$ de $\Zp$ ainsi 
qu'une suite $(a_s)$ de nombres entiers positifs ou nuls tels que $a_s 
\equiv a \pmod{p^s}$. Nous devons démontrer que la suite des $[a_s]_{\tau,V}$
converge. Or, en écrivant $a_{s+1} = a_s + p^s b$ pour un certain
entier $b$, nous vérifions que nous avons la relation :
\begin{equation}
\label{eq:asquantique}
[a_{s+1}]_{\tau,V} = 
[a_s]_{\tau,V} + \tau^{a_s} \circ \big(\Id + \tau^{p^s} + \cdots +
\tau^{(b-1)p^s} \big) \circ [p^s]_{\tau,V}.
\end{equation}
Il suffit donc de montrer que la suite des $[p^s]_{\tau,V}$ converge
vers l'endomorphisme nul de $V$. Or, par continuité de l'action de 
$G_K$, il existe un entier $n$ tel que l'endomorphisme $[p^n]_{\tau,V}$ 
soit congru à l'identité modulo $\m_R$. Comme le corps résiduel de $R$ 
est de caractéristique $p$, nous en déduisons que pour tout $s \geq n$, 
l'endomorphisme
$$f_s = \Id + [p^s]_{\tau,V} + ([p^s]_{\tau,V})^2 +
\cdots + ([p^s]_{\tau,V})^{p-1}$$
prend ses valeurs dans $\m_R \cdot V$. De la relation 
$[p^{s+1}]_{\tau,V} = [p^s]_{\tau,V} \circ f_s$, nous déduisons par 
récurrence sur $s$ que, pour $s \geq n$, l'image de $[p^s]_{\tau,V}$ est 
incluse dans $\m_R^{s-n} \cdot V$. Nous concluons en reportant cette
information dans \eqref{eq:asquantique}.
\end{proof}

\begin{rem}
Si $a$ est un élément de $\Zp$, nous utilisons encore la notation 
$[a]_{\tau,V}$ pour désigner l'image de $a$ par l'unique prolongement
promis par le lemme \ref{lemunique}.
De plus, avec cette notation, la relation \eqref{eq:binomequantique}
s'étend par continuité à tout couple $(a,b)$ d'éléments de $\Zp$.
\end{rem}

Introduisons l'espace $V_\tau$ défini par :
$$V_\tau = \Big\{ \,\, x \in V \quad \Big| \quad
g(x) = [\epsilon(g)]_{\tau,V}(x), \,\, 
\forall g \in \Goo \,\,\Big\}.$$
Nous vérifions immédiatement que l'application $\tau - 1$ induit par
restriction et corestriction un morphisme de $H^0(\Goo, V)$ dans 
$V_\tau$. 

\begin{lem}
Pour tout élément $x$ de $V_\tau$, l'application
$$\gamma_x : G_K \to V, \,\, 
g \mapsto [c(g)]_{\tau,V}(x)$$
est un $1$-cocycle.
\end{lem}

\begin{proof}
Soient $g$ et $g'$ deux éléments de $G_K$. Calculons :
\begin{eqnarray}
\gamma_x(g g') & = & [c(gg')]_{\tau,V}(x) \label{eq:cocycle1} \\
& = & [c(g) + \epsilon(g) c(g')]_{\tau,V}(x) \nonumber \\
& = & \big([c(g)]_{\tau,V} + \tau_V^{c(g)} \circ [\epsilon(g) c(g')]_{\tau,V}\big)(x) \nonumber \\
& = & \gamma_x(g) + \tau_V^{c(g)} \circ [\epsilon(g) c(g')]_{\tau,V}(x). \nonumber
\end{eqnarray}
Posons $g_0 = \tau^{-c(g)} g$. Un calcul immédiat donne 
$\epsilon(g_0) = \epsilon(g)$ et $c(g_0) = 0$. Cette dernière 
égalité signifie que $g_0$ est dans le sous-groupe $\Goo$. Nous en 
déduisons la relation de commutation $g_0 \tau = \tau^{\epsilon(g_0)} 
g_0 = \tau^{\epsilon(g)} g_0$. Ainsi, en supposant en outre 
pour commencer que $c(g')$ et $\epsilon(g)$ sont des entiers 
positifs ou nuls, nous obtenons :
\begin{eqnarray*}
g_0 \cdot \gamma_x(g')
& = & \sum_{i=0}^{c(g')-1} g_0 \tau^i x 
\, = \, \sum_{i=0}^{c(g')-1} \tau^{i \epsilon(g)} g_0 x \\
& = & \sum_{i=0}^{c(g')-1} \tau^{i \epsilon(g)} 
      \sum_{j=0}^{\epsilon(g)-1} \tau^j x \\
& = & \sum_{i=0}^{\epsilon(g) c(g')-1} \tau^i x
\, = \, [\epsilon(g) c(g')]_{\tau,V}(x),
\end{eqnarray*}
la troisième égalité résultant de l'appartenance de $x$ à $V_\tau$.
Par continuité, l'égalité que nous venons d'établir vaut encore lorsque 
$c(g')$ et $\epsilon(g)$ sont, plus généralement, des éléments de 
$\Zp$. En appliquant $\tau^{c(g)}$ à cette égalité, nous obtenons :
$$g \cdot [c(g')]_{\tau,V}(x) = 
\tau_V^{c(g)} \circ [\epsilon(g) c(g')]_{\tau,V}(x).$$
En combinant ceci avec \eqref{eq:cocycle1}, nous trouvons finalement
$\gamma_x(g g') = \gamma_x(g) + g \cdot
\gamma_x(g')$,
c'est-à-dire la relation de cocycle que nous devions établir.
\end{proof}

Notons $\delta$ l'application de $V_\tau$ dans $H^1(G_K, V)$ qui à
$x$ associe le $1$-cocycle $\gamma_x$.  Nous vérifions immédiatement que
$\delta$ est $R$-linéaire.

\begin{prop}
\label{prop:inflrest}
Soit $V$ une représentation de $G_K$ à coefficients dans $R$. 
La suite de $R$-modules :
$$0 \longrightarrow H^0(G_K, V) \longrightarrow H^0(\Goo, V) 
\stackrel{\tau{-}1}\longrightarrow V_\tau
\stackrel{\delta}\longrightarrow
H^1(G_K, V) \longrightarrow H^1(\Goo, V)$$
est exacte.
\end{prop}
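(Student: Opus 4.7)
Le plan consiste \`a \'etablir l'exactitude aux quatre positions non triviales en s'appuyant sur deux faits structurels qui d\'ecoulent de la description $\Gal(\Koo(\zeta_\infty)/K) \simeq \Zp \rtimes \Gamma$ et de la caract\'erisation de $\Goo$ comme noyau de $c$. Premi\`erement, tout $g \in G_K$ se factorise en $g = \tau^{c(g)} g_0$ avec $g_0 \in \Goo$ : il suffit en effet d'observer, par la relation de cocycle pour $c$ et l'\'egalit\'e $\epsilon(\tau) = 1$, que $c(\tau^{-c(g)} g) = 0$. Deuxi\`emement, pour $g \in \Goo$, la conjugaison donne $g \tau g^{-1} = \tau^{\epsilon(g)} h$ pour un certain $h \in \Gal(\Kbar/\Koo(\zeta_\infty)) \subset \Goo$ ; ceci refl\`ete l'action par $\epsilon$ du facteur $\Gamma$ sur $\Zp$ dans le produit semi-direct.

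L'injectivit\'e en $H^0(G_K, V)$ est imm\'ediate. Pour l'exactitude en $H^0(\Goo, V)$, si $x$ est fix\'e par $\Goo$ et v\'erifie $\tau x = x$, alors $\tau^a x = x$ pour tout $a \in \Zp$ par continuit\'e, et donc $g x = \tau^{c(g)} g_0 x = x$ pour tout $g \in G_K$. Pour l'exactitude en $V_\tau$, si $\delta(y) = \gamma_y$ est le cobord d'un \'el\'ement $z \in V$, alors \'evaluer la relation $\gamma_y(g) = g z - z$ en $g \in \Goo$ (pour lequel $c(g) = 0$) donne $g z = z$, si bien que $z \in H^0(\Goo, V)$ ; l'\'evaluer en $g = \tau$ fournit alors $y = (\tau-1) z$. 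R\'eciproquement, le cocycle $\gamma_{(\tau - 1) z}$ et le cobord $g \mapsto g z - z$ co\"incident sur $\Goo$ (tous deux s'annulent) et en $\tau$ (tous deux valent $(\tau - 1) z$), donc sur $G_K$ tout entier par la relation de cocycle.

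L'\'etape la plus d\'elicate est l'exactitude en $H^1(G_K, V)$. Soit $[c]$ une classe de cohomologie dont la restriction \`a $\Goo$ est nulle ; quitte \`a modifier $c$ par un cobord, on peut supposer que le repr\'esentant $c'$ choisi v\'erifie $c'|_{\Goo} = 0$. Posons alors $y := c'(\tau)$. Pour v\'erifier que $y$ appartient \`a $V_\tau$, on calcule $c'(g\tau)$ de deux mani\`eres pour $g \in \Goo$ : d'une part $c'(g) + g \cdot c'(\tau) = g y$ par la relation de cocycle, d'autre part $c'(\tau^{\epsilon(g)} h g) = c'(\tau^{\epsilon(g)})$ via le second fait structurel, cette derni\`ere quantit\'e valant $[\epsilon(g)]_{\tau, V}(y)$ apr\`es prolongement par continuit\'e \`a $\Zp$ de l'identit\'e enti\`ere $c'(\tau^n) = [n]_{\tau, V}(y)$. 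L'\'egalit\'e des deux expressions donne $g y = [\epsilon(g)]_{\tau, V}(y)$. Enfin, pour tout $g \in G_K$, en \'ecrivant $g = \tau^{c(g)} g_0$, la relation de cocycle fournit $c'(g) = c'(\tau^{c(g)}) + \tau^{c(g)} c'(g_0) = [c(g)]_{\tau, V}(y) = \gamma_y(g)$, d'o\`u $[c] = \delta(y)$. L'obstacle technique principal du raisonnement est le passage continu des identit\'es concernant $\tau^n$ et $[n]_{\tau, V}$ de $n \in \N$ \`a $n \in \Zp$, ce qu'autorisent pr\'ecis\'ement le lemme \ref{lemunique} et la continuit\'e des cocycles sur le groupe profini $G_K$ ; le reste est une manipulation formelle analogue \`a celle de Hochschild--Serre, adapt\'ee au fait que $\Goo$ n'est pas distingu\'e dans $G_K$.
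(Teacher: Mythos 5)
Votre d\'emonstration est correcte et suit pour l'essentiel la m\^eme d\'emarche que la preuve de l'article : m\^eme d\'ecomposition $g = \tau^{c(g)} g_0$ avec $g_0 \in \Goo$, m\^emes v\'erifications de cocycles aux quatre positions, m\^eme passage par continuit\'e de $\N$ \`a $\Zp$ fond\'e sur le lemme \ref{lemunique}. La seule diff\'erence est que vous v\'erifiez explicitement, via la relation de conjugaison $g \tau g^{-1} = \tau^{\epsilon(g)} h$ avec $h \in \Goo$, que l'\'el\'ement $c'(\tau)$ appartient bien \`a $V_\tau$ --- point laiss\'e implicite dans la preuve de l'article pour l'exactitude en $H^1(G_K,V)$ --- ce qui compl\`ete l'argument sans en changer la nature.
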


\begin{proof} 
Le fait que la composée de deux flèches successives soit nulle se vérifie 
immédiatement. Il en va de même de l'injectivité de la flèche $H^0(G_K, 
V) \to H^0(\Goo, V)$. L'exactitude en $H^0(\Goo, V)$ ne pose guère plus 
de difficulté en remarquant que $G_K$ est topologiquement 
engendré par $\Goo$ et $\tau$.

Vérifions l'exactitude en $V_\tau$. Nous considérons pour cela un élément 
$x$ de $V_\tau$ tel que le cocycle associé $\gamma_x$ soit trivial en 
cohomologie. Cela signifie que $\gamma_x$ est un cobord, c'est-à-dire 
qu'il existe un élément $y$ dans $V$ tel que 
$$\gamma_x(g) = g(y) - y, \quad \forall g \in G_K.$$
En particulier, pour $g = \tau$, nous obtenons $x = \tau(y) - y$ et 
pour $g \in \Goo$, nous trouvons $g(y) = y$. L'élément $y$ 
appartient donc à $H^0(\Goo, V)$ et s'envoie sur $x$ par l'application 
$\tau - 1$. Ceci est exactement ce qu'il fallait démontrer.

Il ne reste plus qu'à démontrer l'exactitude en $H^1(G_K, V)$. Soit
$\gamma : G_K \to V$ un $1$-cocycle dont la restriction à $\Goo$ est
triviale en cohomologie. Quitte à modifier $\gamma$ par un cobord --- 
ce qui ne change pas son image dans $H^1(G_K, V)$ --- nous pouvons supposer
que $\gamma$ s'annule sur $\Goo$. Posons $x = \gamma(\tau)$. Nous nous
proposons de démontrer que $\gamma = \gamma_x$. Une récurrence immédiate 
sur $n$ montre que :
$$\gamma(\tau^n) = [n]_{\tau,V} (x)$$
pour tout entier naturel $n$ et, par continuité, l'égalité précédente
vaut encore si $n$ est un entier $p$-adique. Soit $g$ un élement 
de $G_K$. Posons $g_0 = \tau^{-c(g)} g$. Nous avons ainsi 
$c(g_0) = 0$, ce qui signifie que $g_0$ appartient à $\Goo$. 
Nous pouvons ainsi écrire : 
$$\gamma(g) = \gamma(\tau^{c(g)} g_0) = 
[c(g)]_{\tau,V} (x)$$
puisque $\gamma(g_0) = 0$. Nous en déduisons que $\gamma(g) =
\gamma_x(g)$ comme annoncé. Au final, $\gamma$ est dans l'image
de l'application $\delta$ et l'exactitude en $H^1(G_K, V)$ est
démontrée.
\end{proof}

\begin{cor}
\label{cor:inflrest}
Soit $\rhobar$ une $k_E$-représentation de $G_K$ sur laquelle $\tau$ 
agit trivialement. Alors nous avons une suite exacte :
$$0 \longrightarrow H^0(G_K, V(-1)) \longrightarrow H^1(G_K,V)
\longrightarrow H^1(\Goo,V)$$
et le premier terme non nul de cette suite s'identifie encore à 
$H^0(\Goo, V(-1))$.
\end{cor}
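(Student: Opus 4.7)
The plan is to derive the corollary as a direct specialization of Proposition~\ref{prop:inflrest} to the situation where $\tau$ acts trivially on $V$. First, I would examine how each term of the five-term exact sequence of that proposition simplifies under this hypothesis. Since $\tau_V = \Id_V$, the partial sums $[n]_{\tau,V}$ reduce to $n \cdot \Id_V$ for every $n \in \N$, so by the uniqueness part of Lemma~\ref{lemunique}, the continuous extension must satisfy $[a]_{\tau,V} = a \cdot \Id_V$ for every $a \in \Zp$. Consequently the defining condition of $V_\tau$ collapses to $g(x) = \epsilon(g) \, x$ for all $g \in \Goo$, which identifies $V_\tau$ with $H^0(\Goo, V(-1))$; moreover the map $\tau - 1 : H^0(\Goo, V) \to V_\tau$ becomes identically zero.

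Substituting these identifications into the exact sequence of Proposition~\ref{prop:inflrest} then immediately produces
$$0 \longrightarrow H^0(\Goo, V(-1)) \longrightarrow H^1(G_K, V) \longrightarrow H^1(\Goo, V),$$
which is the desired sequence with $H^0(\Goo, V(-1))$ in place of $H^0(G_K, V(-1))$.

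To conclude, I would verify that $H^0(G_K, V(-1))$ does coincide with $H^0(\Goo, V(-1))$. Since $\epsilon(\tau) = 1$ and $\tau$ acts trivially on $V$, the element $\tau$ also acts trivially on $V(-1)$; applying the first three terms of Proposition~\ref{prop:inflrest} to $V(-1)$ (which gives $H^0(G_K, V(-1)) = \ker(\tau - 1\colon H^0(\Goo, V(-1)) \to (V(-1))_\tau) = H^0(\Goo, V(-1))$) yields the required identification. Alternatively, one can argue that $G_K$ is topologically generated by $\Goo$ and $\tau$, so the $G_K$-invariance condition on $V(-1)$ reduces to $\Goo$-invariance once one knows it holds at $\tau$, which is automatic.

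No step is a genuine obstacle: the whole argument is a mechanical specialization of the preceding proposition. The only moment requiring care is the invocation of Lemma~\ref{lemunique} to propagate the equality $\tau_V = \Id_V$ to all $p$-adic integer exponents; once this is in place, every other simplification is a direct reading of the definitions.
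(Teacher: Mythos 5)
Your proposal is correct and follows essentially the same route as the paper's proof: specialize Proposition~\ref{prop:inflrest}, identify $V_\tau$ with $H^0(\Goo, V(-1))$, note that $\tau - 1$ vanishes on $H^0(\Goo, V)$, and observe that $H^0(G_K, V(-1)) = H^0(\Goo, V(-1))$ because $\tau$ also acts trivially on $V(-1)$. The paper states these same three facts more tersely; your write-up simply unwinds the details, and your invocation of Lemma~\ref{lemunique} to justify $[a]_{\tau,V} = a\cdot\Id_V$ for all $a\in\Zp$ is the right way to make the identification of $V_\tau$ fully rigorous.
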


\begin{proof}
En revenant aux définitions, nous déduisons de l'hypothèse, d'une part, que 
$H^0(\Goo, V(-1)) = H^0(G_K, V(-1)) \simeq V_\tau$ et, d'autre part, 
que l'application $\tau - 1$ de $H^0(\Goo, V)$ dans $V_\tau$ est nulle.
Le corollaire \ref{cor:inflrest} en découle.
\end{proof}

\begin{rem}
\label{rem:inflrest}
L'hypothèse du corollaire \ref{cor:inflrest} est satisfaite pour les
représen\-tations \emph{irréductibles} de $G_K$. En effet, nous savons que le 
sous-groupe d'inertie sauvage --- et donc, en particulier $\tau$ --- 
agit trivialement sur une telle représentation.
Par extension, l'hypothèse du corollaire est aussi vérifiée si $\rhobar$ 
est une représentation semi-simple ou un produit tensoriel de 
représentations semi-simples.
\end{rem}

\subsubsection{Application aux anneaux de déformations}

Soient $R_1$ et $R_2$ deux $\oE$-algèbres locales noetheriennes complètes, 
d'idéaux maximaux respectifs $\m_1$ et $\m_2$.
Pour $i = 1, 2$, supposons que le morphisme structurel de $R_i$ induit 
un isomorphisme de $k_E$ sur le corps résiduel de $R_i$.
Considérons en outre un morphisme de $\oE$-algèbres $f$ de $R_1$ dans 
$R_2$ qui, sur les corps résiduels, induit l'identité de $k_E$.

Notons $k_E[\varepsilon]$ l'anneau des nombres duaux sur $k_E$ (nous avons 
donc $\varepsilon^2 = 0$).
Pour $i = 1, 2$, l'espace tangent $\Hom_{\oE-\mathrm{alg}} (R_i , 
k_E[\varepsilon]) $ est naturellement isomorphe au dual du $k_E$-espace 
vectoriel $\frac{\m_i}{\pE R_i + \m_i^2} $. En particulier, il hérite 
ainsi d'une structure de $k_E$-espace vectoriel. Notons 
$$
f^* : \Hom_{\oE-\mathrm{alg}} (R_2 , k_E[\varepsilon]) \longrightarrow 
\Hom_{\oE-\mathrm{alg}} (R_1 , k_E[\varepsilon])
$$ 
l'application $k_E$-linéaire induite par $f$ sur les espaces tangents.
Rappelons le lemme classique suivant.

\begin{lem}\label{lem:fsurj}
Le morphisme $f$ est surjectif si et seulement si l'application tangente 
$f^*$ est injective.
\end{lem}

\begin{proof}
Par dualité, l'application $f^*$ est injective si, et seulement si son
application duale :
$$\frac{\m_1}{\pE R_1 + \m_1^2} \longrightarrow
\frac{\m_2}{\pE R_2 + \m_2^2}$$
est surjective. Le lemme en résulte par un argument classique.
\end{proof}

Supposons donnés à présent une $\oE$-algèbre locale noetherienne $R$, 
de corps résiduel $k_E$, ainsi qu'un morphisme de $\oE$-algèbres $f$ de 
$R^\psi(\vv, \ttt , \rhobar)$ dans $R$.
La théorie des espaces de déformations permet d'identifier l'espace 
tangent de $R^\psi(\vv, \ttt , \rhobar)$ à un sous-$k_E$-espace 
vectoriel de $\Ext_{G_K}^1(\rhobar, \rhobar)$.
Nous pouvons ainsi considérer que l'application tangente $f^*$ prend ses 
valeurs dans $\Ext_{G_K}^1(\rhobar, \rhobar)$.

Notons $f^{\sharp}$ l'application composée :
$$f^{\sharp} : \Hom_{\oE-\mathrm{alg}} (R , k_E[\varepsilon]) 
\stackrel{f^*}{\longrightarrow} \Ext_{G_K}^1(\rhobar, \rhobar) 
\longrightarrow \Ext_{\Goo}^1(\rhobar, \rhobar),$$
où la deuxième flèche est induite par la restriction à $\Goo$ sur les 
représentations.

\begin{rem}
Nous verrons au paragraphe \ref{sssec:calculExt1} qu'au moins dans les cas traités 
dans cet article, l'espace $\Ext_{\Goo}^1(\rhobar, \rhobar)$ et la 
composée $f^\sharp$ se décrivent explicitement en termes de 
$\varphi$-modules.
\end{rem}

Nous déduisons du lemme~\ref{lem:fsurj} que l'injectivité de $f^\sharp$ 
implique la surjectivité de $f$ et que la réciproque est vraie sous 
l'hypothèse supplémentaire que l'application naturelle de 
$\Ext_{G_K}^1(\rhobar, \rhobar)$ dans $\Ext_{\Goo}^1(\rhobar, \rhobar)$ 
est injective.
Le lemme ci-après donne une condition suffisante pour que cette hypothèse 
soit satisfaite.

\begin{lem}
\label{lem:rinj}
Soit $\rhobar$ une $k_E$-représentation absolument irréductible de $G_K$ 
de dimension $d$.
\begin{enumerate}
\item  Si $\rhobar$ n'est pas isomorphe à $\rhobar(1)$, alors l'application :
$$r : \Ext_{G_K}^1(\rhobar, \rhobar) \longrightarrow \Ext_{\Goo}^1(\rhobar, \rhobar)$$
est injective.
\item Si $\rhobar$ est isomorphe à $\rhobar(1)$, alors $p-1$ divise $de_K$, où
$e_K$ désigne l'indice de ramification absolue de $K$.
\end{enumerate}
\end{lem}

\begin{rem}
En combinant les deux assertions du lemme, nous obtenons l'injectivité de 
$r$ sous l'hypothèse que $p-1$ ne divise pas le produit de l'indice de
ramification absolue de $K$ et de la dimension de $\rhobar$.
\end{rem}

\begin{proof}
Nous démontrons d'abord la première assertion.
Posons $V = \rhobar^* \otimes \rhobar$. L'espace
$\Ext^1_G(\rhobar, \rhobar)$ s'identifie canoniquement à $H^1(G, 
V)$ lorsque $G$ désigne l'un des deux groupes $G_K$ ou $\Goo$. Ainsi, 
d'après le corollaire \ref{cor:inflrest}, le noyau de $r$ s'identifie
à $H^0(G_K, V(-1))$, soit encore à $\Hom_{G_K}(\rhobar, \rhobar(-1))$.
Comme $\rhobar$ est irréductible et non isomorphe à $\rhobar(-1)$ 
d'après l'hypothèse, cet espace s'annule et le
morphisme $r$ est injectif.

Pour la deuxième assertion, notons $e$ (resp. $f$) l'indice de 
ramification (resp. le degré résiduel) de l'extension $K / \Qp$. 
Considérons $K_d$ l'unique extension non ramifiée de $K$ de degré $d$ 
vivant à l'intérieur de $\Qpbar$. D'après la classification des 
représentations absolument irréductibles de $G_K$, nous savons que :
$$\rhobar \simeq \Ind_{G_{K_d}}^{G_K} (\omega_{df}^h \otimes \chi_\nr)$$
où $\omega_{df}$ désigne un caractère fondamental de Serre de niveau 
$df$, $h$ est un entier et $\chi_\nr$ est un caractère non ramifié. À 
partir de là, la condition d'isomorphisme entre $\rhobar$ et 
$\rhobar(1)$ implique l'existence d'un entier $n$ vérifiant la 
congruence :
$$p^n h \equiv h + e \cdot \frac{p^d-1}{p-1} \pmod{p^d-1}.$$
Il résulte de ceci que $p-1$ divise $e \cdot \frac{p^d-1}{p-1}$, ce qui 
ne peut se produire que si $p-1$ divise $de$.
\end{proof}

Nous en arrivons au résultat principal de cette partie. Pour l'énoncer, 
considérons une extension finie $E$ de $\Qp$ et une représentation 
galoisienne $\rhobar$ de $G_K$ de dimension finie $d$ à coefficients 
dans le corps résiduel $k_E$ de $E$. Nous supposons que $\rhobar$ n'a 
pas d'endomorphisme autre que les multiplications par les éléments de 
$k_E^\times$. Soient encore $\vv$ un type de Hodge--Tate, $\ttt$ un type 
galoisien ainsi que $\chi: G_K \to \oE^\star$ un caractère %
relevant le déterminant de $\rhobar$. 
L'anneau $R_{\cri}^{\det = \chi}(\vv, \ttt, \rhobar)$ 
qui paramètre les déformations potentiellement cristallines\footnote{La 
proposition \ref{prop:factorisationf} ci-après vaudrait encore si nous 
remplacions \og cristalline \fg\ par \og semi-stable \fg.} de 
$\rhobar$ de type $(\vv, \ttt)$ et de déterminant $\chi$ a alors un 
sens.

\begin{prop}\label{prop:factorisationf}
Nous reprenons les notations ci-dessus. Nous supposons en outre que 
$\rhobar$ est absolument irréductible, n'est pas isomorphe à 
$\rhobar(1)$ et que le type $\ttt$ se factorise par le groupe de
Weil d'une extension modérément ramifiée de $K$.

Soient $R$ une $\oE$-algèbre locale complète noetherienne de corps 
résiduel $k_E$ et $f$ un morphisme de $R^{\det = \chi}_\cri(\vv, \ttt, \rhobar)$ 
dans $R$.
Soient $V$ la $R^{\det = \chi}_\cri(\vv, \ttt, \rhobar)$-représentation universelle de 
$G_K$ et $V_R = R \otimes_{R^{\det = \chi}_\cri(\vv, \ttt, \rhobar)} V$. Nous 
supposons que la restriction de $V_R$ à $\Goo$ est définie sur un 
sous-$\oE$-algèbre $R'$ de $R$, c'est-à-dire qu'il existe une 
$R'$-représentation $V_{R'}$ de $\Goo$ qui est libre comme $R'$-module 
et telle que $V_R \simeq R \otimes_{R'} V_{R'}$. Alors, le morphisme $f$ 
prend ses valeurs dans $R'$.
\end{prop}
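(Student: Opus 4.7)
L'id\'ee est de construire un prolongement de la repr\'esentation $V_{R'}$ de $\Goo$ en une $G_K$-repr\'esentation $\tilde V_{R'}$ libre sur $R'$ satisfaisant $R \otimes_{R'} \tilde V_{R'} \simeq V_R$ comme $G_K$-repr\'esentations, puis d'en d\'eduire la factorisation de $f$ par la propri\'et\'e universelle de l'anneau de d\'eformations non restreint $R^\psi(\rhobar)$ combin\'ee \`a l'injectivit\'e de $R' \hookrightarrow R$.

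Pour construire $\tilde V_{R'}$, nous proc\'edons par approximation artinienne. Posons $A_n = R/\m_R^n$ et notons $B_n \hookrightarrow A_n$ l'image de $R'$ dans $A_n$, qui s'identifie canoniquement au quotient $R'/(R' \cap \m_R^n)$. Nous construisons par r\'ecurrence sur $n$ une $G_K$-repr\'esentation $\tilde V_{B_n}$ sur $B_n$, libre de rang $d = \dim \rhobar$, telle que $A_n \otimes_{B_n} \tilde V_{B_n} \simeq V_{A_n}$ comme $G_K$-repr\'esentations, $\tilde V_{B_n}\vert_{\Goo} \simeq B_n \otimes_{R'} V_{R'}$, et compatible avec les rel\`evements pr\'ec\'edents ; le cas $n=1$ est trivial (on prend $\tilde V_{B_1} = \rhobar$).

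L'\'etape inductive repose sur la th\'eorie des obstructions et sur le Lemme \ref{lem:rinj}. Notons $I = \m_R^{n-1}/\m_R^n$ et $I_0 = \ker(B_n \rightarrow B_{n-1})$ ; ce sont des $k_E$-espaces vectoriels et l'inclusion $B_n \hookrightarrow A_n$ induit une injection $I_0 \hookrightarrow I$. L'obstruction au rel\`evement de $\tilde V_{B_{n-1}}$ sur $B_n$ vit dans $H^2(G_K, \End \rhobar) \otimes_{k_E} I_0$, s'envoie injectivement sur l'obstruction analogue \`a valeurs dans $I$ (qui s'annule puisque $V_{A_n}$ existe), et s'annule donc. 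Soit $\tilde V_{B_n}^{(0)}$ un tel rel\`evement. Les deux rel\`evements $A_n \otimes_{B_n} \tilde V_{B_n}^{(0)}$ et $V_{A_n}$ de $V_{A_{n-1}}$ \`a $A_n$ diff\`erent d'une classe $x \in H^1(G_K, \End \rhobar) \otimes_{k_E} I$. L'hypoth\`ese que $V_{A_n}\vert_{\Goo}$ provient de $B_n$ entra\^{\i}ne que l'image de $x$ dans $H^1(\Goo, \End \rhobar) \otimes I$ appartient au sous-espace $H^1(\Goo, \End \rhobar) \otimes I_0$. Une chasse au diagramme dans le carr\'e commutatif de sommets $H^1(G_K, \End \rhobar) \otimes I$, $H^1(\Goo, \End \rhobar) \otimes I$, $H^1(G_K, \End \rhobar) \otimes (I/I_0)$ et $H^1(\Goo, \End \rhobar) \otimes (I/I_0)$, dont les fl\`eches horizontales sont injectives gr\^ace au Lemme \ref{lem:rinj} et \`a la platitude des $k_E$-espaces $I$ et $I/I_0$, montre alors que $x$ appartient lui-m\^eme \`a l'image de $H^1(G_K, \End \rhobar) \otimes I_0$. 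En modifiant $\tilde V_{B_n}^{(0)}$ par un ant\'ec\'edent convenable, on obtient $\tilde V_{B_n}$ satisfaisant toutes les conditions requises ; la compatibilit\'e en restriction \`a $\Goo$ r\'esulte de la m\^eme chasse au diagramme appliqu\'ee dans $H^1(\Goo, \cdot)$.

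En passant \`a la limite projective, on obtient $\tilde V_{R'}$, libre de rang $d$ sur $R'$, de d\'eterminant $\chi$, r\'eduisant sur $\rhobar$, avec $R \otimes_{R'} \tilde V_{R'} \simeq V_R$ comme $G_K$-repr\'esentations. Par la propri\'et\'e universelle de $R^\psi(\rhobar)$, elle correspond \`a un morphisme $\Phi' : R^\psi(\rhobar) \rightarrow R'$ dont le compos\'e avec $R' \hookrightarrow R$ co\"{\i}ncide avec la composition $R^\psi(\rhobar) \twoheadrightarrow R^{\det = \chi}_\cri(\vv, \ttt, \rhobar) \rightarrow R$ induite par $f$ ; ce dernier compos\'e annule l'id\'eal de $R^\psi(\rhobar)$ d\'efinissant $R^{\det = \chi}_\cri(\vv, \ttt, \rhobar)$, et l'injectivit\'e de $R' \hookrightarrow R$ force alors $\Phi'$ \`a annuler aussi cet id\'eal, donc \`a se factoriser en un morphisme $R^{\det = \chi}_\cri(\vv, \ttt, \rhobar) \rightarrow R'$ relevant $f$. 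Le principal obstacle est la chasse au diagramme de l'\'etape inductive, qui exploite \`a la fois l'hypoth\`ese sur $V_R\vert_{\Goo}$ et l'injectivit\'e fournie par le Lemme \ref{lem:rinj}.
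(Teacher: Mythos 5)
Your route is genuinely different from the paper's. The paper never descends the $G_K$-structure by hand: it introduces (after Kim) an auxiliary deformation ring $R^{\det = \chi}(h,\star,\rhobar_\infty)$ classifying deformations of $\rhobar|_{\Goo}$ whose restriction to $\GooL$ has $E_L(u_L)$-height at most $h$ --- this is exactly where the tameness of $\ttt$ and the potential crystallinity are used --- proves that the natural map $g$ from this ring to $R^{\det=\chi}_\cri(\vv,\ttt,\rhobar)$ is surjective via Lemmas \ref{lem:fsurj} and \ref{lem:rinj}, applies the universal property of that ring to $V_{R'}$ to produce $f'_\infty$ with values in $R'$, and concludes by a diagram chase using the injectivity of $R'\hookrightarrow R$. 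You replace Kim's ring by a direct artinian d\'evissage (obstruction theory plus injectivity of $\Ext^1_{G_K}(\rhobar,\rhobar)\to \Ext^1_{\Goo}(\rhobar,\rhobar)$) so as to descend $V_R$ itself to a $G_K$-representation over $R'$, and then factor through the unrestricted fixed-determinant ring; if completed, this would prove a statement that does not use the height or tameness hypotheses at all. However, two genuine gaps remain.

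First, your chase treats the set of lifts of a $\Goo$-deformation along a small extension as a \emph{free} $H^1(\Goo,\End\rhobar)\otimes I$-set: this is what lets you identify $x|_{\Goo}$ with the image of the class $y\in H^1(\Goo,\End\rhobar)\otimes I_0$ comparing the two $B_n$-models, and what gives $\tilde V_{B_n}|_{\Goo}\simeq B_n\otimes_{R'}V_{R'}$, without which the induction cannot continue. Freeness requires $\End_{k_E[\Goo]}(\rhobar)=k_E$; Lemma \ref{lem:rinj} controls $\Ext^1$, not $H^0(\Goo,\End\rhobar)$, and this condition is not among the hypotheses. It is true here (an absolutely irreducible $\rhobar$ stays absolutely irreducible after restriction to $\Goo$, because $K_\infty/K$ is totally wildly ramified and the relevant tame and unramified characters survive restriction), but it must be stated and proved; without it you only pin down $x|_{\Goo}$ modulo the stabilizer of the deformation. (The paper needs a condition of this kind as well, but delegates it to the representability theorem it cites.) Second, $\varprojlim_n B_n$ is not $R'$ but its closure $\bigcap_n (R'+\m_R^n)$ in $R$; your $\tilde V_{R'}$ is free over that closure, so as written you only show that $f$ takes values in the closure of $R'$, whereas the statement allows an arbitrary sub-$\oE$-algebra $R'$. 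You must either assume $R'$ closed (true in the paper's applications, where $R'$ is the isomorphic image of a complete local noetherian ring, but it should be said) or give a further argument. Minor points in the same direction: the universal property you invoke is that of the fixed-determinant ring, so you should check $\det\tilde V_{R'}=\chi$ (immediate from the injectivity into $R$, since $\chi$ is $\oE$-valued) and interpret that universal property pro-artinianly, the closure of $R'$ having no reason to be noetherian.
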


\begin{rem}
Par l'équivalence de catégories du théorème~\ref{thm:equivFontaine}, 
dire que la restriction de $V_R$ à $\Goo$ est définie sur $R'$ revient à 
dire que le $\varphi$-module associé $M_R$ à $V_R$ s'écrit sous la forme 
$(R \hat \otimes_{\Zp} \ocE) \otimes_{R' \hat \otimes_{\Zp} \ocE} 
M_{R'}$ pour un certain $\varphi$-module $M_{R'}$ \emph{libre} sur $R' 
\hat \otimes_{\Zp} \ocE$.
\end{rem}

\begin{proof}
Quitte à tordre par une puissance du caractère cyclotomique, nous pouvons
supposer que tous les poids de Hodge--Tate qui interviennent dans 
$\vv$ sont positifs ou nuls. Notons $h$ le plus grand d'entre eux.
Soit $L$ une extension finie modérément ramifiée de $K$ par laquelle
$\ttt$ se factorise. Pour simplifier l'exposition\footnote{Quitte à 
agrandir $L$, dans le cas général, nous pouvons toujours supposer que 
$L$ s'écrit comme la composée d'une extension non ramifiée et d'une 
extension du type que nous considérons ici. Pour éténdre la 
démonstration au cas général, il s'agit de prendre en compte la partie 
non ramifiée dans la donnée de descente, ce qui est standard et ne pose 
pas de difficultés particulières.}, nous supposons que $L$ s'obtient à 
partir de $K$ en ajoutant une racine $n$-ième $\pi_L$ de $\pi$ pour un 
entier $n$, premier avec $p$, tel que $K$ contienne une racine primitive 
$n$-ième de l'unité. Nous sommes ainsi dans la situation du \S 
\ref{ssec:descente} (dont nous reprenons les notations) et pouvons 
décrire les représentations de $G_K$ qui deviennent semi-stables en 
restriction à $G_L$ à l'aide de modules de Breuil--Kisin munis de 
données de descente.

Notons par ailleurs que les représentations paramétrées par $R_\cri^{\det = \chi} 
(\vv, \ttt, \rhobar)$ sont cristallines et de $E_L(u_L)$-hauteur inférieure ou égale à
$h$ en restriction à $\GooL$. Soit $\rhobar_\infty$ la restriction de 
$\rhobar$ à $\Goo$. En copiant la démonstration du théorème 3.2 de 
\cite{Kim}, nous construisons une $\oE$-algèbre complète locale 
noetherienne $R^{\det = \chi}(h, \star, \rhobar_\infty)$ qui paramètre les 
déformations de $\rhobar_\infty$ qui sont des représentations de 
déterminant $\chi$ dont la restriction à $\GooL$ est de $E_L(u_L)$-hauteur 
inférieure ou égale à $ h$. Nous avons un morphisme canonique
$$g : R^{\det = \chi}(h, \star, \rhobar_\infty)
\longrightarrow R_\cri^{\det = \chi}(\vv, \ttt, \rhobar).$$
L'application tangente de $g$ s'identifie à l'application canonique
$$r : \Ext_{G_K}^1(\rhobar, \rhobar) \longrightarrow 
\Ext_{\Goo}^1(\rhobar, \rhobar)$$
convenable restreinte et corestreinte. Elle est donc injective d'après 
le lemme \ref{lem:rinj}. Nous en déduisons, par le lemme \ref{lem:fsurj}, 
que $g$ est surjective.

Par ailleurs, comme la restriction de $V_R$ à $\GooL$ est de 
$E_L(u_L)$-hauteur inférieure ou égale à $h$; il en va de même de $V_{R'}$. Par la 
propriété universelle définissant l'anneau $R^{\det = \chi}(h, \star, 
\rhobar_\infty)$, la représentation $V_{R'}$ correspond à un morphisme 
d'anneaux
$$f'_\infty : R^{\det = \chi}(h, \star, \rhobar_\infty) \longrightarrow R'.$$
En outre, en notant $\iota$ l'inclusion canonique de $R'$ dans $R$, nous
déduisons du fait que $V_{R'}$ redonne $V_R$ après extension des scalaires
que $\iota \circ f'_\infty = f \circ g$. Pour conclure, il suffit de 
construire un morphisme d'anneaux $f'$ de $R_\cri^{\det = \chi}(\vv, \ttt, \rhobar)$ 
dans $R'$ rendant commutatif le diagramme suivant :
$$\xymatrix @C=50pt @R=10pt {
& R^{\det = \chi}_\cri(\vv, \ttt, \rhobar) \ar[dr]^-{f} 
\ar@{.>}^-{f'} [dd] \\
R^{\det = \chi}(h, \star, \rhobar_\infty) 
\ar[dr]_-{f'_\infty} \ar@{->>}[ur]^-{g} & & R \\
& \hspace{0.2cm} R' \hspace{0.2cm} \ar@{^(->}[ur]_-{\iota}}$$
Nous construisons $f'$ par une chasse au diagramme élémentaire. Soit 
$x$ un élément de $R^{\det = \chi}_\cri(\vv, \ttt, \rhobar)$. Par surjectivité 
de $g$, il s'écrit sous la forme $g(y)$ pour un certain $y$ dans 
$R^{\det = \chi}(h, \star, \rhobar_\infty)$. En outre, si $y'$ est un 
autre antécédent de $x$ par $g$, les images de $y$ et $y'$ par $\iota 
\circ f'_\infty$ coïncident dans $R$ par commutativité du diagramme. 
Comme $\iota$ est injectif, cela implique que $f'_\infty(y) = 
f'_\infty(y')$. Nous pouvons ainsi définir $f'(x)$ sans ambiguïté en 
posant $f'(x) = f'_\infty(y)$. Nous concluons enfin en vérifiant que 
$f'$ est un morphisme d'anneaux et fait commuter le diagramme.
\end{proof}

\begin{rem}
Dans cet article, nous appliquerons la proposition 
\ref{prop:factorisationf} uniquement pour des types $\vv$ correspondant 
à des poids de Hodge--Tate dans $\{0, 1\}$.
Or, pour de telles représentations, une autre approche basée sur le 
théorème \ref{thm:hauteur1} serait possible. L'approche que nous allons 
suivre, bien que probablement moins directe, nous paraît toutefois plus 
intéressante car susceptible de s'étendre à des espaces de déformations
qui ne sont pas (potentiellement) Barsotti--Tate, mais peuvent avoir
des poids de Hodge--Tate plus élevés.
\end{rem}

\section{Méthode de calcul des déformations potentiellement Barsotti--Tate}
\label{sec:methode}

\newcommand{\NP}{\text{\rm NP}}

Nous reprenons à présent les notations de la partie \ref{sec:motivations}. 
Notamment, la lettre $F$ (resp. $F'$) désigne l'unique extension non ramifiée 
de $\Qp$ de degré $f$ (resp. $2f$), nous posons $q = p^f$ et notons $L$ le 
corps obtenu en adjoignant à $F$ une racine $(q-1)$-ième de $(-p)$, notée 
$\pi_L$. Afin de pouvoir appliquer la théorie de Breuil--Kisin, nous 
choisissons en outre un système compatible $(\pi_{L,n})$ de racines 
$p^n$-ièmes de $\pi_L$ et posons $\pi_{F,n} = \pi_{L,n}^{q-1}$ pour tout $n$. 
La famille des $\pi_{F,n}$ forme alors un système compatible de racines 
$p^n$-ièmes de $(-p)$. Nous notons $F_\infty$ (resp. $L_\infty$) l'extension 
de $F$ (resp. de $L$) engendrée par tous les $\pi_{F,n}$ (resp. tous les 
$\pi_{L,n}$). Enfin, nous posons $\Goo = \Gal(\Qpbar/ F_\infty)$.

Le but de cette partie est de présenter une méthode pour déterminer les 
anneaux de déformations $R^{\psi}(\vv_0, \ttt, \rhobar)$ pour des valeurs
particulières de $\vv_0$, $\ttt$ et $\psi$ :
\begin{itemize}
\item[$\bullet$]  pour tout plongement $\tau$ de $F$ dans $E$, ${\vv_0}_\tau = (0,2)$ 
(cas potentiellement Barsotti--Tate) ;
\item[$\bullet$] le type galoisien $\ttt$ s'écrit sous la forme $(\eta\oplus \eta')_{| I_F}$, où $\eta$ et
$\eta'$ sont deux caractères distincts 
de $\Gal(L/F)$ dans $\oE^\times $ ;
\item[$\bullet$]  le choix de $\psi$ est compatible avec les $\ttt$ et $\vv_0$
ci-dessus (voir relation (\ref{relpsi}), \S\ref{sssec:defgal}).
\end{itemize}
Cette méthode est largement inspirée de précédents travaux de Breuil et 
Mézard (\cite{BM1}, \cite{BM2}).
Ces deux approches présentent cependant une différence notable : alors 
que Breuil et Mézard emploient des modules fortement divisibles, nous 
travaillons avec les modules de Breuil--Kisin, dont la manipulation nous 
paraît plus aisée et nous permet d'affaiblir les hypothèses.

\begin{rem'}
Les types  galoisiens $\ttt$ considérés ci-dessus étant non scalaires, 
les déformations potentiellement semi-stables paramétrées par 
$R^\psi(\vv_0, \ttt, \rhobar)$ sont en fait potentiellement cristallines.
\end{rem'}

Afin de rester plus proche de \cite{BM2}, nous allons utiliser dans la
suite la version covariante du foncteur de Breuil--Kisin au lieu de sa
version contravariante définie au \S \ref{sec:outils}. Précisément, si $V$ est une représentation de $G_\infty$ ou de 
$\Gal(\Qpbar/L_\infty)$, nous posons :
$$\bbM(V) = \bbM^\star(V^\star(1)),$$
où $V^\star$ désigne la représentation duale de $V$. Le foncteur $\bbM$
ainsi obtenu est alors covariant. À partir de maintenant, lorsque nous
parlerons du $\varphi$-module ou du $\varphi^f$-module associé à $V$, 
ou, réciproquement, de la représentation associée à un $\varphi$-module,
un $\varphi^f$-module ou un module de Breuil--Kisin, ce sera toujours
au sens du foncteur \emph{covariant} $\bbM$.

Le plan de cette partie est le suivant.
Nous classifions au \S \ref{ssec:classification} les modules de 
Breuil--Kisin associés à des représentations potentiellement 
Barsotti--Tate de type galoisien comme ci-dessus. Nous expliquons ensuite au \S 
\ref{ssec:calculdeform} comment déduire de cette classification une 
méthode de calcul des espaces de déformations correspondants. Cela nous 
amène notamment à déterminer la réduction modulo $p$ des représentations 
associées à des modules de Breuil--Kisin (\S \ref{sssec:represid}) et à 
calculer une base explicite de $\Ext^1_{G_\infty}(\rhobar,\rhobar)$, 
groupe qui nous permet de contrôler l'espace tangent aux déformations de 
la représentation $\rhobar$ (\S \ref{sssec:calculExt1}).

\subsection{Un théorème de classification des modules de Breuil--Kisin}
\label{ssec:classification}

Dans ce paragraphe, nous énonçons et démontrons un raffinement de la 
proposition~5.2 de \cite{Br2}. Hormis le fait que nous l'exprimons 
dans le langage des modules de Breuil--Kisin (et non dans celui des
modules fortement divisibles), notre résultat se distingue de celui
de \emph{loc. cit.} par deux aspects essentiels : premièrement, nous traitons le cas d'un anneau de coefficients $R$
qui est une $\oE$-algèbre locale noetherienne complète \emph{plate}
quelconque (et non pas uniquement le cas $R = \oE$) et,
deuxièmement, nous affaiblissons l'hypothèse de généricité sur le type galoisien.
Ce cadre plus général est indispensable pour une application aux représentations non génériques.

\subsubsection{Type des modules de Breuil--Kisin}

Afin de pouvoir démontrer notre résultat de classification dans la 
généralité énoncée ci-dessus, nous avons besoin, en guise de préalable,
d'étendre les notions de type de Hodge et de type galoisien à n'importe
quel module de Breuil--Kisin à coefficients. C'est l'objet de ce paragraphe.

\begin{definit}
\label{def:typeBK}
Soient $R$ une $\oE$-algèbre locale, complète, noetherienne, de corps 
résiduel $k_E$ et $\MK$ un module de Breuil--Kisin \emph{libre} de rang $2$ 
sur $R \hat \otimes_{\Zp} \SK$ avec donnée de descente de $L$ à $F$. Le 
module $\MK$ est dit :
\begin{itemize}
\item[$\bullet$] \emph{de type de Hodge $\vv_0$} si pour tout $i$ dans $\Z / f \Z $, l'idéal déterminant du frobenius $\varphi$ de $\MK^{(i)}$ dans $\MK^{(i+1)}$ est l'idéal principal engendré par $u^e + p$ ;
\item[$\bullet$] \emph{de type galoisien $\ttt$} si pour tout $i$ dans $\Z / f \Z $, il existe une base $\left(e^{(i)}_\eta, e^{(i)}_{\eta'}\right) $ de $\MK^{(i)}$ comme $R[[ u ]]$-module telle que $\Gal(L/F)$ agit sur $e^{(i)}_\eta$ par $\eta$ et sur $e^{(i)}_{\eta'}$ par $\eta'$ ;
\item[$\bullet$] \emph{de déterminant $\psi \epsilon$} si le déterminant 
de la représentation de $G_{\infty}$ qui lui est associée est
$(\psi \epsilon)_{|G_{\infty}}$.
\end{itemize}
Il est dit de type $(\vv_0 , \ttt, \psi )$ s'il satisfait aux trois 
propriétés ci-dessus.
\end{definit}

\begin{rem}
Dans la définition ci-dessus, nous avons supposé que $\MK$ est libre en 
tant que module sur $R \hat \otimes_{\Zp} \SK$. La raison en est, d'une 
part, que la définition est plus simple à écrire sous cette hypothèse 
additionnelle et, d'autre part, que dans le théorème de classification 
que nous allons énoncer (voir proposition \ref{propecriturejolie}), 
l'hypothèse de liberté est absolument nécessaire. Il nous est donc paru 
opportun de restreindre la définition \ref{def:typeBK} à cette situation.
\end{rem}

\begin{rem}
Soit $\MK$ un module de Breuil--Kisin comme dans la définition 
\ref{def:typeBK}. Supposons que $\MK$ soit de type de Hodge $\vv_0$ et 
de type galoisien $\ttt$. Il existe alors un élément $\delta$ de 
$R^\times$ ainsi qu'une base $w$ de $(\det \MK)^{(0)} $ telle que
$$\varphi^f (w) = \delta \cdot (u^e + p) \cdot \varphi(u^e +p) \cdots 
\varphi^{f-1}(u^e +p) \cdot w.$$
De plus, le prolongement canonique de la représentation galoisienne 
associée à $\MK$ donné par le théorème \ref{thm:hauteur1} (et après la renormalisation du début de la partie \ref{sec:methode}) a pour
déterminant $\eta \eta' \cdot \nr(\delta^{-1}) \cdot \epsilon$.
\end{rem}

La proposition suivante, qui est une conséquence simple des résultats
de Kisin, fait le lien entre être de type $(\vv_0, \ttt, \psi)$ pour
une représentation et pour un module de Breuil--Kisin.

\begin{prop}\label{propstroumphe}
Supposons que $R$ soit l'anneau des entiers d'une extension finie de 
$E$. Soit $V$ une représentation de $G_F$ de dimension $2$, libre sur
$R$.
Alors, $V$ est de type $(\vv_0, \ttt, \psi)$ si et seulement s'il
existe un module de Breuil--Kisin $\MK$ de type $(\vv_0, \ttt, \psi)$
dont la représentation de $G_F$ associée \emph{via} le foncteur
$\bbM$ est $V$.
\end{prop}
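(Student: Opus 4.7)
La strat�gie consiste � appliquer le th�or�me~\ref{thm:Eusst} dans un sens et le th�or�me~\ref{thm:hauteur1} dans l'autre, en contr�lant au passage les conditions de type. Le point de d�part est l'observation suivante~: comme $\pi_L^{q-1} = -p$, nous avons $E_L(u_L) = u_L^{q-1} + p$, c'est-�-dire exactement le polyn�me $u^e + p$ qui intervient dans la d�finition du type de Hodge $\vv_0$ (d�finition~\ref{def:typeBK}). Un module de Breuil--Kisin libre de rang~$2$ est donc de type de Hodge $\vv_0$ si et seulement s'il est de $E_L(u_L)$-hauteur $\leq 1$ avec d�terminant de Frobenius engendr� par $E_L(u_L)$ sur chaque composante $\MK^{(i)}$~; c'est l'invariant \og potentiellement Barsotti--Tate \fg\ traduit sur $\MK$.

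Pour le sens \og s'il existe \fg, partant d'un module de Breuil--Kisin $\MK$ de type $(\vv_0, \ttt, \psi)$, le th�or�me~\ref{thm:hauteur1} prolonge canoniquement la $G_{L_\infty}$-repr�sentation associ�e en une repr�sentation cristalline de $G_L$, et la donn�e de descente rappel�e au \S\ref{ssec:descente} �tend ensuite cette action � $G_F$, ce qui fournit une repr�sentation potentiellement cristalline. Les trois invariants de type se lisent alors directement sur $\MK$~: les poids de Hodge--Tate $(0,1)$ en chaque plongement se d�duisent par un calcul standard de la forme des diviseurs �l�mentaires du Frobenius~; l'action de $\Gal(L/F)$ sur les bases $(e_\eta^{(i)}, e_{\eta'}^{(i)})$ donne la structure $\eta \oplus \eta'$ en restriction � l'inertie sur la repr�sentation de Weil--Deligne associ�e~; le d�terminant $\psi\varepsilon$, enfin, est impos� par la troisi�me condition de la d�finition \ref{def:typeBK}.

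R�ciproquement, soit $V$ de type $(\vv_0, \ttt, \psi)$. Sa restriction � $G_L$ est cristalline de poids de Hodge--Tate dans $\{0,1\}$, ce qui permet d'appliquer le th�or�me~\ref{thm:Eusst} et de trouver un module de Breuil--Kisin $\MK$ de $E_L(u_L)$-hauteur $\leq 1$ sur $R \hat\otimes_{\Zp} \SK_L$ v�rifiant $\bbM(V|_{G_{L_\infty}}) \simeq \ocEL \otimes_{\SK_L} \MK$. L'unicit� donn�e par l'alin�a \eqref{item:uniciteBK} du m�me th�or�me --- qui s'applique ici puisque $R$, anneau des entiers d'une extension finie de $E$, est plat sur $\Zp$ --- permet de munir $\MK$ d'une action semi-lin�aire de $\Gal(L/F)$ compatible avec celle sur $\bbM(V|_{G_{L_\infty}})$. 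Il reste � v�rifier que ce $\MK$ est bien de type $(\vv_0, \ttt, \psi)$, ce qui revient essentiellement � diagonaliser l'action de $\Gal(L/F)$ sur chaque $\MK^{(i)}$ selon les caract�res $\eta$ et $\eta'$.

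Le point le plus d�licat me semble �tre cette diagonalisation. L'action de $\Gal(L/F)$ sur $\MK^{(i)}$ est semi-lin�aire par rapport � $R[[u_L]]$, commute � la multiplication par $u = u_L^n$, et le groupe $\Gal(L/F)$ est d'ordre $q-1$, premier � $p$. L'hypoth�se $\eta \neq \eta'$ entra�ne que les r�ductions modulo $\pE$ restent distinctes (les caract�res �tant d'ordre premier � $p$ sur un groupe cyclique), ce qui fournit, par l'argument classique d'idempotents dans l'alg�bre de groupe $R[\Gal(L/F)]$, la d�composition de $\MK^{(i)}$ en somme directe de sous-modules propres pour $\eta$ et $\eta'$. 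Un argument de rang au niveau de la fibre sp�ciale combin� au lemme de Nakayama montre que chacun de ces sous-modules est libre de rang $1$ sur $R[[u_L]]$, d'o� l'existence de la base voulue et la conclusion.
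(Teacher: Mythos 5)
Your proof follows the same overall strategy as the paper's: one direction rests on reading the three type invariants off a Breuil--Kisin module of type $(\vv_0,\ttt,\psi)$ --- the Hodge--Tate weights via the determinant ideal of Frobenius (the paper cites lemme 1.2.2 de \cite{Ki-Crys} at this point), the Galois type via the descent action on $\MK/u\MK$ inverted en $p$, and the determinant via the third condition of the d\'efinition~\ref{def:typeBK} --- while the other direction passes par le th\'eor\`eme~\ref{thm:Eusst}, obtenant $\MK$ de $E_L(u_L)$-hauteur $\leq 1$ muni d'une donn\'ee de descente par unicit\'e. The paper's own proof is extremely terse (the converse is dismissed by \og{}la r\'eciproque se traite de mani\`ere similaire\fg{}), so your write-up actually supplies more of the substance.

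There is one imprecision worth flagging in your converse. You assert that the $\eta$- and $\eta'$-eigenspaces of $\MK^{(i)}$ are \og{}libres de rang $1$ sur $R[[u_L]]$\fg{}. This cannot be taken literally: since $\Gal(L/F)$ acts semi-lin\'eairement on $R[[u_L]]$ (multiplying $u_L$ par une racine de l'unit\'e), multiplication par $u_L$ envoie un espace propre de caract\`ere $\chi$ dans celui de caract\`ere $\eta_n\chi$, so the eigenspaces are not $R[[u_L]]$-submodules --- they are only modules over $R[[u_L^n]]=R[[u_K]]$. The correct mechanism, which you do allude to via \og{}argument de rang au niveau de la fibre sp\'eciale combin\'e au lemme de Nakayama\fg{}, is: the quotient $\MK^{(i)}/u_L\MK^{(i)}$ is a free rank-$2$ $R$-module carrying a genuinely $R$-lin\'eaire action of $\Gal(L/F)$, which decomposes into rank-$1$ eigenspaces for $\eta$ and $\eta'$ (the idempotent argument applies here because $|\Gal(L/F)|=q-1$ is premier \`a $p$ and $\eta\not\equiv\eta'\pmod{\pE}$, and the appearance of exactly these two characters uses the hypoth\`ese \og{}type galoisien $\ttt$\fg{} through the identification de la repr\'esentation de Weil--Deligne avec l'action de descente sur $\MK/u\MK$); one then lifts eigenvectors by averaging over $\Gal(L/F)$ and invokes Nakayama to get a basis of $\MK^{(i)}$ sur $R[[u_L]]$. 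This is a rewording issue rather than a genuine gap, but as written the claim about $R[[u_L]]$-freeness of eigenspaces is false.
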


\begin{rem}
\emph{A priori}, la représentation associée à $\MK$ \emph{via} le
foncteur $\bbM$ n'est qu'une représentation de $\Goo$. Toutefois,
d'après le théorème \ref{thm:hauteur1}, celle-ci se prolonge de façon
canonique au groupe $G_F$ tout entier.
L'énoncé de la proposition \ref{propstroumphe} a donc bien un sens.
\end{rem}

\begin{rem}
Si, comme dans la proposition \ref{propstroumphe}, $R$ est l'anneau
des entiers d'une extension finie de $E$, tout module de Breuil--Kisin
sur $R \hat\otimes_{\Zp} \SK$ est automatiquement libre. Il n'y a donc ici 
aucune subtilité à l'hypothèse de liberté.
\end{rem}

\begin{proof}
Supposons d'abord l'existence de $\MK$. Le fait que la représentation 
$V$ soit de déterminant $\psi\varepsilon$ est alors immédiat. Elle est de type 
galoisien $\ttt$ car la représentation de Weil--Deligne associée à $V$ 
est donnée par l'action de la donnée de descente sur $(\MK/u\MK)[1/p]$.
Enfin, en ce qui concerne la condition sur le type de Hodge, remarquons 
que, d'après le lemme 1.2.2 de \cite{Ki-Crys}, les poids de Hodge--Tate 
de $V$ sont tous positifs ou nuls et, pour tout $i \in \Z/f\Z$, l'idéal 
déterminant de $\varphi : \MK^{(i)} \to \MK^{(i+1)}$ est engendré par 
$(u^e + p)^{h_i}$ où $h_i$ est la somme des poids de Hodge--Tate pour le 
plongement $\tau_i$. Notre hypothèse s'écrit donc $h_i = 1$. Nous en
déduisons que, pour chaque plongement $\tau$, les poids de Hodge--Tate de 
$V$ sont $0$ et $1$, ce qui signifie bien que $V$ est de type de Hodge 
$\vv_0$.

La réciproque se traite de manière similaire.
\end{proof}

Par le théorème \ref{thm:Eusst}, le module $\MK$ de la proposition 
\ref{propstroumphe}, s'il existe, est unique à isomorphisme près. Nous 
obtenons ainsi une bijection canonique entre les représentations $V$ de 
type $(\vv_0, \ttt, \psi)$ et les modules de Breuil--Kisin de type 
$(\vv_0, \ttt, \psi)$ sur $R \hat \otimes_{\Zp} \SK$.

\subsubsection{Genre d'un module de Breuil--Kisin}

Rappelons que $\SSS$ désigne l'ensemble des plongements de $F$ dans 
$\Qpbar$ et que nous avons fixé un élément privilégié $\tau_0$ dans $\SSS$. Les autres 
éléments de $\SSS$ s'obtiennent alors en composant $\tau_0$ par les
puissances successives du Frobenius agissant sur $F$.
Nous pouvons écrire :
$$
\eta \cdot (\eta')^{-1} = (\tau_0 \circ \varepsilon_f)^{\sum\limits^{f-1}_{i = 0} c_ip^i} = \prod_{i \in \Z/f\Z} (\tau_0 \circ \varphi^i \circ \varepsilon_f)^{c_i}
$$
et de même
$$
\eta' = (\tau_0 \circ \varepsilon_f)^{\sum\limits^{f-1}_{i = 0} b_ip^i} = \prod_{i \in \Z/f\Z} (\tau_0 \circ \varphi^i \circ \varepsilon_f)^{b_i}
$$
où les $c_i$ et $b_i$ sont des entiers compris entre $0$ et $p-1$.
Il est commode, et nous le ferons, de considérer que les indices des $b_i$ et des $c_i$ vivent dans $\Z/f\Z$.
Posons également
$$\gamma_i=\sum_{j=0}^{i-1}c_{f-(i-j)}p^j+\sum_{j=i}^{f-1}c_{j-i}p^j
$$
et
$\beta_i$
l'équivalent pour les $b_i$.
Pour tout $i$, ces entiers satisfont aux congruences
$p^i \gamma_0 \equiv \gamma_i \pmod{p^f - 1}$
et $p^i \beta_0 \equiv \beta_i \pmod{p^f - 1}$
de sorte que nous avons
$\eta' = \tau_0 \circ \varphi^{-i} \circ \varepsilon_f^{\beta_i}$
et 
$\eta = \tau_0 \circ \varphi^{-i} \circ \varepsilon_f^{\gamma_i + \beta_i}$.
Notons que l'hypothèse $\eta \neq \eta'$ assure que tous les entiers $\gamma_i$ sont compris entre $1$ et $e-1$,
 ou de manière équivalente que le $f$-uplet $(c_0,\ldots,c_{f-1})$ n'est ni $(0,\ldots, 0 )$ ni $(p-1, \ldots , p-1)$.

Considérons à présent une $\oE$-algèbre $R$ que nous supposons comme 
habituellement locale, complète, noetherienne et de corps résiduel 
$k_E$. Nous notons $\m_R$ l'idéal maximal de $R$. 
Soit $\MK$ un module de Breuil--Kisin sur $R \hat\otimes_{\Zp} \SK$ de 
type $(\vv_0, \ttt, \psi)$. D'après la définition, il existe pour tout 
$i$ dans  $\Z/f\Z$, une base $\left(e_\eta^{(i)},e_{\eta'}^{(i)}\right)$ de $\MK^{(i)}$ 
dans laquelle l'action de la donnée de descente est diagonale et donnée 
par les caractères $\eta$ et $\eta'$. Il en résulte que la matrice du 
Frobenius dans ces mêmes bases est de la forme
\begin{equation}
\label{formematriceG}
G^{(i)} = 
\begin{pmatrix} s_1^{(i)}&u^{e-\gamma_{i+1}}s_2^{(i)}\\
u^{\gamma_{i+1}}s_3^{(i)}& s_4^{(i)}\\ \end{pmatrix}
\end{equation}
avec $s_j^{(i)}\in R[[u^e]]$ pour $1\leq j\leq 4$ et $\det 
G^{(i)}=(u^e+p)\alpha$ avec $\alpha\in R[[u]]$ inversible.
Un changement de base de $\MK^{(i)}$ compatible à l'action de 
$\Gal(L/F)$ a pour matrice
\begin{equation}
\label{formechgtbase}
P^{(i)}= \begin{pmatrix} \sigma_1^{(i)}&u^{e-\gamma_i}\sigma_2^{(i)}\\
u^{\gamma_i}\sigma_3^{(i)}& \sigma_4^{(i)}\\ \end{pmatrix}
\end{equation}
avec $\sigma_j^{(i)}\in R[[u^e]]$ pour $1\leq j\leq 4$. La matrice de 
$\varphi:\MK^{(i)}\rightarrow\MK^{(i+1)}$ après changement de base
s'écrit
\begin{equation}
\label{changementdebase}
H^{(i)} = \left(P^{(i+1)}\right)^{-1}G^{(i)}\varphi(P^{(i)})
\end{equation}
Réciproquement, une famille $(G^{(i)})_{0\leq i\leq f-1}$ de matrices de 
cette forme définit un module de Breuil--Kisin et deux telles familles 
définissent le même module de Breuil--Kisin de type $\eta\oplus \eta'$ à 
isomorphisme près si et seulement si elles s'obtiennent par des 
changements de base de la forme (\ref{changementdebase}) pour $0\leq 
i\leq f-1$. Le lemme suivant nous permet de définir la notion de genre d'un module de Breuil--Kisin :
\begin{lem}
Soit
$$
G=\begin{pmatrix} s_1&u^{e-\gamma}s_2\\
u^{\gamma}s_3& s_4\\ \end{pmatrix}$$
avec $s_j\in R[[u^e]]$ pour $1\leq j\leq 4$, $\gamma$ un entier entre $0$ et $e$ et $\det G=\alpha(u^e+p)$ pour $\alpha$ dans $R[[u^e]]$ inversible. Notons $\bar{s}_i \equiv s_i\pmod {u^e}$.
\begin{enumerate}[(i)]
\item Si $\bar{s}_4$ est inversible dans $R$, notons $\GG(G)=\I_\eta$,
\item Si $\bar{s}_1$ est inversible dans $R$, notons $\GG(G)=\I_{\eta'}$,
\item Sinon, notons
$\GG(G)=\II$.
\end{enumerate}
Soit $\MK$ un module de Breuil--Kisin sur $R \hat\otimes_{\Zp} \SK$ de 
type $(\vv_0, \ttt, \psi)$ et $(G^{(i)})_{0\leq i\leq f-1}$ une famille de matrices de Frobenius pour un choix de bases de $\MK^{(i)}$. Alors la suite des genres $(\GG(G^{(i)}))_{0\leq i\leq f-1}$ est uniquement déterminée par le module de Breuil--Kisin $\MK$ et est dite genre de $\MK$.

\end{lem} 
\begin{proof} 
Il suffit de constater que la définition de $\GG(G)$ est compatible au changement de base (\ref{changementdebase}).
\end{proof}
Dans la suite, nous allons énoncer un théorème de classification (Proposition \ref{propecriturejolie} des modules de Breuil--Kisin excluant certains genres très particuliers dont l'étude fera l'objet d'un article ultérieur. 

\begin{definit}\label{def:mauvaisgenre}
Un module de Breuil--Kisin $\MK$ sur $R \hat\otimes_{\Zp} \SK$ de 
type $(\vv_0, \ttt, \psi)$ est dit de mauvais genre s'il n'a aucun facteur de genre $\II$ et si pour tout $0\leq i\leq f-1$
$$c_{f-1-i}=\left\{\begin{array}{lll} 1&\mbox{ si }&\GG(G^{(i-1)})=\GG(G^{(i)})=\I_\eta,\cr
0&\mbox{ si }& \GG(G^{(i-1)})=\I_\eta\mbox{ et }\; \GG(G^{(i)})=I_{\eta'},\cr
p-2&\mbox{ si }&\GG(G^{(i-1)})=\GG(G^{(i)})=\I_{\eta'},\cr
p-1&\mbox{ si }& \GG(G^{(i-1)})=\I_{\eta'}\mbox{ et }\; \GG(G^{(i)})=I_{\eta}.\cr
\end{array}\right.$$
\end{definit}

\subsubsection{Énoncé du théorème de classification}
L'objectif du 
paragraphe \ref{ssec:classification} est de démontrer les propositions 
\ref{propecriturejolie} et \ref{prop:uniciteecriture} ci-dessous.

\begin{prop} \label{propecriturejolie}
Nous supposons soit que $R$ est plate sur $\oE$, soit que $R = k_E$.

Soit $\MK=\MK^{(0)}\oplus\cdots\oplus\MK^{(f-1)}$ un module de 
Breuil--Kisin \emph{libre} de rang $2$ sur $R \hat\otimes_{\Zp} \SK$ de 
type galoisien $\eta\oplus \eta'$ et de type de Hodge $\vv_0$.

Alors il existe des éléments $\alpha, \alpha'$ dans $R^\times$ ainsi que, 
pour tout $i\in \{0,\ldots,f-1\}$, une base $e_\eta^{(i)}, 
e_{\eta'}^{(i)}$ de $\MK^{(i)}$ et des paramètres $a_i, a'_i$ (vivant 
dans un espace précisé ci-après) tels que :
\begin{enumerate}[(1)]
\item pour tout $i$, la donnée de descente agit sur $e_\eta^{(i)}$
(resp. $e_{\eta'}^{(i)}$) par le caractère $\eta$ 
(resp. $ \eta'$) ;
\item la matrice $G^{(i)}$ pour $0\leq i\leq f-2$ (resp. 
$\begin{pmatrix}\alpha^{-1}&0\\ 0&\alpha'^{-1}\\ \end{pmatrix} 
G^{(f-1)}$ pour $i = f-1$) de $\varphi:\MK^{(i)}\rightarrow\MK^{(i+1)}$ 
est de l'une des formes suivantes :
\begin{itemize}
\item {\bf Genre $\I_\eta$}:
$\begin{pmatrix} u^{e}+p& 0\\a_iu^{\gamma_{i+1}}& 1\\ \end{pmatrix}$ pour $a_i$ dans $R + Ru^e$ et, par convention, $a'_i= 0$ ;
\item {\bf Genre $\I_{\eta'}$}:
$\begin{pmatrix} 1&a'_iu^{e-\gamma_{i+1}}\\0& u^e+p\\ \end{pmatrix}$
pour $a'_i$ dans $R + R u^e$ et, par convention, $a_i = 0$ ;
\item {\bf Genre $\II$}:
$\begin{pmatrix} a_i&u^{e-\gamma_{i+1}}\\u^{\gamma_{i+1}}& a'_i\\ \end{pmatrix}$ avec $a_i, a'_i$ dans $\m_R$ et $a_i a'_i=-p$.
\end{itemize}
\end{enumerate}
De plus, si le module $\MK$ n'a pas mauvais genre, on peut choisir $a_i$ et $a'_i$ dans $R$ pour tout $i$.
\end{prop}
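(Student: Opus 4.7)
Mon plan est de proc\'eder par normalisation it\'erative matrice par matrice, en suivant une approche d'approximations successives $u$-adiques. Je commence par fixer pour chaque $i$ une base $(e^{(i)}_\eta, e^{(i)}_{\eta'})$ compatible avec l'action de $\Gal(L/F)$, ce qui me donne les matrices $G^{(i)}$ de la forme \eqref{formematriceG}. La condition $\det G^{(i)} = \alpha_i (u^e + p)$ impose $\bar s_1^{(i)} \bar s_4^{(i)} \equiv 0 \pmod{\m_R}$ (puisque $p \in \m_R$ apr\`es r\'eduction en $u = 0$), ce qui entra\^ine qu'au plus une des deux entr\'ees diagonales r\'eduites est inversible. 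La discussion des trois genres $\I_\eta$, $\I_{\eta'}$, $\II$ est donc exhaustive, et le genre est un invariant du module.

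\textbf{Normalisation dans chaque genre.} Traitons d'abord le genre $\I_\eta$ pour lequel $\bar s_4^{(i)}$ est inversible. Par une action \`a gauche par $(P^{(i+1)})^{-1}$ de la forme \eqref{formechgtbase}, je peux rendre l'entr\'ee $s_4^{(i)}$ \'egale \`a $1$ et annuler l'entr\'ee $u^{e - \gamma_{i+1}} s_2^{(i)}$ (les multiplications par des \'el\'ements de $R[[u^e]]$ sur les lignes suffisent pour cela, car le ph\'enom\`ene de division par $s_4^{(i)}$ converge gr\^ace au caract\`ere inversible de $\bar s_4^{(i)}$ et \`a la compl\'etude). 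La condition de d\'eterminant force alors $s_1^{(i)} = \beta_i (u^e + p)$ avec $\beta_i \in R[[u^e]]^\times$ ; une derni\`ere renormalisation \`a droite (par $\varphi(P^{(i)})$ d'un scalaire de type $\mathrm{diag}(\beta_i^{-1}, 1)$) absorbe $\beta_i$. Il reste l'entr\'ee $s_3^{(i)}$ que nous \'ecrivons $a_i$ ; l'argument cl\'e pour obtenir $a_i \in R + Ru^e$ (et pas seulement $R[[u^e]]$) repose sur l'utilisation r\'esiduelle des changements de base affectant uniquement cette entr\'ee : les termes en $u^{2e}, u^{3e}, \ldots$ peuvent \^etre absorb\'es successivement via des $P^{(i)}$ de la forme $\mathrm{diag}(1, 1 + u^e R[[u^e]])$ ou analogues, en exploitant que les applications $\lambda \mapsto \lambda - \varphi(\lambda) u^e$ sur $u^e R[[u^e]]$ ont une image de codimension finie contr\^ol\'ee par les exposants $c_i$ -- c'est pr\'ecis\'ement ici qu'intervient la d\'efinition \ref{def:mauvaisgenre} du mauvais genre, comme obstruction \`a ramener $a_i$ dans $R$. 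Le genre $\I_{\eta'}$ se traite de fa\c{c}on sym\'etrique. Pour le genre $\II$, le fait que ni $\bar s_1^{(i)}$ ni $\bar s_4^{(i)}$ ne soient inversibles permet une normalisation compl\`ete des entr\'ees hors diagonale \`a $u^{\gamma_{i+1}}$ et $u^{e - \gamma_{i+1}}$ respectivement (les facteurs $s_2^{(i)}, s_3^{(i)}$ doivent \^etre inversibles, sinon $u^e$ diviserait $\det G^{(i)}$, contredisant la factorisation par $u^e + p$), puis la relation de d\'eterminant calcul\'ee modulo $u^e$ donne exactement $a_i a'_i = -p$.

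\textbf{Synchronisation cyclique et facteurs $\alpha, \alpha'$.} L'obstacle principal est que chaque changement de base sur $\MK^{(i)}$ affecte \`a la fois $G^{(i-1)}$ (\`a gauche) et $G^{(i)}$ (\`a droite, via $\varphi$). Je proc\`ede donc dans l'ordre $i = 0, 1, \ldots, f-2$, en utilisant \`a chaque \'etape pour normaliser $G^{(i)}$ uniquement des changements de base sur $\MK^{(i+1)}$ (qui n'ont pas encore \'et\'e fix\'es). Cette strat\'egie laisse enti\`erement libre le choix de la base de $\MK^{(0)}$. \`A la fin, pour normaliser $G^{(f-1)}$, il ne reste comme libert\'e que les changements de base \`a droite $\varphi(P^{(0)})$ (qui ne touchent que les puissances $u^{pk}$) : cela permet encore d'annuler la partie de type off-diagonal mais interdit en g\'en\'eral d'absorber des scalaires constants diagonaux, car une dilatation $\mathrm{diag}(\alpha, \alpha')$ de la base de $\MK^{(0)}$ appara\^it \`a droite comme $\mathrm{diag}(\alpha^p, \alpha'^p)$ dans $G^{(f-1)}$ (apr\`es application de $\varphi$) et \`a gauche comme $\mathrm{diag}(\alpha^{-1}, \alpha'^{-1})$ apr\`es un cycle. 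Comme $\alpha \mapsto \alpha^{p-1}$ n'est pas surjective sur $R^\times$ en g\'en\'eral, il subsiste des scalaires diagonaux $(\alpha, \alpha') \in (R^\times)^2$ qui d\'ecorent n\'ecessairement $G^{(f-1)}$, d'o\`u la formulation de l'\'enonc\'e.

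L'hypoth\`ese que $R$ est plate sur $\oE$ ou que $R = k_E$ intervient \`a deux endroits : elle assure que les arguments d'approximations successives ci-dessus convergent proprement (le module $\MK$ \'etant libre et sans $u$-torsion, nous pouvons faire des manipulations sur les coefficients sans craindre d'artefacts de torsion), et elle permet de d\'eduire de $a_i a'_i = -p$ que $a_i, a'_i \in \m_R$ dans le cas $\II$ (ce qui serait faux pour un $R$ g\'en\'eral ayant du $p$-torsion). L'unicit\'e annonc\'ee par la proposition compagnonne \ref{prop:uniciteecriture} s'obtiendra en analysant finement quelles matrices $P^{(i)}$ pr\'eservent les formes canoniques : c'est un calcul direct une fois la classification \'etablie.
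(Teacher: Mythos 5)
Votre sch\'ema de synchronisation ne peut pas aboutir tel quel, et c'est l\`a le trou principal. Avec la convention $H^{(i)}=(P^{(i+1)})^{-1}G^{(i)}\varphi(P^{(i)})$, normaliser $G^{(i)}$ pour $i=0,\ldots,f-2$ en n'utilisant que la base de $\MK^{(i+1)}$ est effectivement possible (c'est la factorisation $G=\BB(G)\cdot N$ du lemme \ref{propdefgenrepassage}, qui produit d'ailleurs directement des param\`etres \emph{constants}, sans votre absorption terme \`a terme) ; mais apr\`es ce premier tour, la matrice au dernier indice est $G^{(f-1)}\varphi(P^{(f-1)})$, dont les coefficients sont de vraies s\'eries de $R[[u^e]]$, et la seule libert\'e restante, la base de $\MK^{(0)}$, appara\^it \`a la fois \`a gauche de $G^{(f-1)}$ (par $(P^{(0)})^{-1}$ --- et non \`a droite par $\varphi(P^{(0)})$ comme vous l'\'ecrivez) et \`a droite de $G^{(0)}$ \emph{via} $\varphi(P^{(0)})$ : toute utilisation de cette libert\'e d\'etruit la normalisation d\'ej\`a faite en $i=0$. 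Il faut donc refaire ind\'efiniment le tour du cycle, et toute la difficult\'e --- absente de votre r\'edaction --- est de montrer que cette it\'eration converge : c'est pr\'ecis\'ement l'objet des id\'eaux $I_t$, $I_t^\varphi$, des lemmes de polygones de Newton \ref{lemtechnique1} et \ref{lemtechnique2}, et du lemme cl\'e \ref{lemjolieecriture}, qui montre qu'un tour complet am\'eliore d'un cran la proximit\'e $\pE$-adique \`a l'identit\'e des matrices de passage, sauf exactement dans les cas de retenue $c\in\{0,1,p-2,p-1\}$ qui d\'efinissent le mauvais genre ; c'est aussi l\`a, via la valuation $v_R$, qu'intervient r\'eellement la platitude de $R$ (et non dans une simple absence d'artefacts de torsion). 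Votre argument de \og codimension finie \fg\ pour $\lambda\mapsto\lambda-\varphi(\lambda)u^e$ ne remplace pas cette estimation : il n'explique ni pourquoi les param\`etres peuvent \^etre pris dans $R$ (plut\^ot que $R+Ru^e$) exactement hors mauvais genre, ni comment vos changements de base diagonaux en $u^e$ \'eviteraient de perturber les indices voisins d\'ej\`a normalis\'es.

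Par ailleurs, votre explication de la survie de $\alpha,\alpha'$ repose sur une pr\'emisse fausse : le Frobenius de $R\hat\otimes_{\Zp}\SK$ est \emph{$R$-lin\'eaire}, donc une matrice constante $\mathrm{diag}(\alpha,\alpha')$ n'est pas transform\'ee en $\mathrm{diag}(\alpha^p,\alpha'^p)$, et la non-surjectivit\'e de $\alpha\mapsto\alpha^{p-1}$ ne joue aucun r\^ole. La vraie raison est la contrainte de coh\'erence cyclique sur les changements de base diagonaux : ils se propagent inchang\'es \`a travers les indices de genre $\I_\eta$ ou $\I_{\eta'}$ et sont \'echang\'es aux indices de genre $\II$, comme le montre la preuve de la proposition \ref{prop:uniciteecriture} ; on peut donc pousser ces scalaires jusqu'au dernier indice mais pas les \'eliminer. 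Enfin, votre \og derni\`ere renormalisation \`a droite par $\varphi(P^{(i)})$ \fg\ pour absorber $\beta_i$ utilise une base que votre propre strat\'egie a d\'ej\`a fig\'ee, et exigerait de surcro\^it que $\beta_i^{-1}$ soit dans l'image de $\varphi$, ce qui est faux pour une unit\'e g\'en\'erale de $R[[u^e]]$.
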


\begin{rem}
\label{rem:constrmodBK}
Réciproquement, la donnée d'une suite de genres $(g_0, \ldots, g_{f-1})$
et de paramètres $(\alpha, \alpha', a_0, a'_0, \ldots, a_{f-1}, a'_{f-1})$
satisfaisant aux conditions de la proposition \ref{propecriturejolie} définit un module
de Breuil--Kisin de type de Hodge $\vv_0$, de type galoisien $\eta 
\oplus \eta'$ et de déterminant $\eta \eta' \cdot \nr\left((-1)^{| \II |} (\alpha\alpha')^{-1} \right)\cdot \varepsilon $, 
où $| \II | $ désigne le nombre de $g_i$ égaux à $\II$.
\end{rem}

\begin{rem} Le résultat de la proposition \ref{propecriturejolie} pour $R=k_E$ se déduit du cas $R=\oE$. En effet, il suffit de constater que tout module de Breuil--Kisin de type galoisien $\eta\oplus\eta'$ et de type de Hodge $\vv_0$ sur $k_E$ se relève en un module de Breuil--Kisin sur $\oE$ de mêmes types puis de réduire modulo $\pE$ la matrice à coefficients dans $\oE$ obtenue par la proposition \ref{propecriturejolie}.
\end{rem}

\begin{prop}
\label{prop:uniciteecriture}
Nous conservons les hypothèses de la proposition \ref{propecriturejolie} et supposons de plus que $\MK$ n'a pas mauvais genre.
Nous nous donnons deux écritures de $\MK$ correspondant à une
suite de genres $(g_0, \ldots, g_{f-1})$ et 
à des paramètres $(\alpha, \alpha', a_0, a'_0, \ldots, a_{f-1}, 
a'_{f-1})$ et $(\beta, \beta', b_0, b'_0, \ldots, b_{f-1}, b'_{f-1})$
respectivement.

En notant $n_i$ 
le nombre de genres $\II$ parmi $g_0, \ldots, g_{i-1}$, il existe un élément 
$\lambda$ de $R^\times$ vérifiant :
$$\text{pour tout } i \in \{0,  \ldots, f-1\}, \quad
b_i = \lambda^{(-1)^{n_i}} 
a_i \text{ et } b'_i = \lambda^{-(-1)^{n_i}} a'_i$$
et, de plus :
\begin{itemize}
\item si $n_f$ est pair, $\alpha = \beta$, $\alpha' = \beta'$,
tandis que
\item si $n_f$ est impair, $\alpha \alpha' = \beta \beta'$ et
$\lambda = \frac{\alpha}{\beta} = \frac{\beta'}{\alpha'}.$
\end{itemize}
\end{prop}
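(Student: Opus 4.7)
Le plan est d'exploiter la contrainte que la coexistence des deux \'ecritures impose sur les matrices de changement de base entre les rep\`eres sous-jacents. Plus pr\'ecis\'ement, je fixerais pour chaque $i$ les bases $(e_\eta^{(i)}, e_{\eta'}^{(i)})$ associ\'ees \`a la premi\`ere \'ecriture et j'appellerais $P^{(i)}$ la matrice qui exprime dans ces bases la base associ\'ee \`a la seconde. La compatibilit\'e \`a l'action de $\Gal(L/F)$ impose \`a $P^{(i)}$ d'\^etre de la forme \eqref{formechgtbase}, et l'\'egalit\'e des deux matrices de Frobenius se traduit par la relation $P^{(i+1)} H^{(i)} = G^{(i)} \varphi(P^{(i)})$ pour $0 \leq i \leq f-2$, \`a laquelle il faut ajouter au rang $i = f-1$ les twists diagonaux par $\mathrm{diag}(\alpha, \alpha')$ et $\mathrm{diag}(\beta, \beta')$.

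La premi\`ere \'etape, et sans doute la plus d\'elicate, consiste \`a analyser finement cette \'equation matricielle, en d\'eveloppant les coefficients $\sigma_j^{(i)}$ de $P^{(i)}$ en s\'eries en $u^e$. En exploitant la forme explicite des matrices $G^{(i)}$ et $H^{(i)}$ fournie par la proposition \ref{propecriturejolie} et le fait que le Frobenius de $\SK = W[[u]]$ multiplie par $p$ les exposants de $u$, je d\'egagerais les constantes scalaires $\lambda_i, \lambda'_i \in R^\times$ attach\'ees \`a $P^{(i)}$ (coefficients constants des entr\'ees diagonales dans les genres $\I_\eta, \I_{\eta'}$, ou des entr\'ees antidiagonales dans le genre $\II$). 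L'hypoth\`ese d'absence de mauvais genre, qui impose aux param\`etres $a_i, a'_i, b_i, b'_i$ de vivre dans $R$ et non dans $R + R u^e$, permettra ensuite, par r\'ecurrence sur la valuation $u$-adique, de v\'erifier que les autres degr\'es de libert\'e de $P^{(i)}$ ne contribuent pas aux relations entre les deux \'ecritures.

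La deuxi\`eme \'etape consiste \`a traduire les termes dominants de l'\'equation matricielle en relations entre les constantes $\lambda_i, \lambda'_i$ et les param\`etres. Dans les genres $\I_\eta$ et $\I_{\eta'}$, on trouve que le couple $(\lambda_i, \lambda'_i)$ est pr\'eserv\'e par le passage de $i$ \`a $i+1$, et que $b_i = (\lambda_i/\lambda'_i) a_i$ (avec la relation sym\'etrique pour $b'_i$). Dans le genre $\II$, en revanche, la sym\'etrie des matrices $G^{(i)}$ et $H^{(i)}$ impose un \emph{\'echange} $(\lambda_{i+1}, \lambda'_{i+1}) = (\lambda'_i, \lambda_i)$, et l'on obtient \emph{simultan\'ement} $b_i = (\lambda_i/\lambda'_i) a_i$ et $b'_i = (\lambda'_i/\lambda_i) a'_i$. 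En posant $\lambda = \lambda_0 / \lambda'_0$, le suivi du quotient $\lambda_i / \lambda'_i$ donne alors imm\'ediatement $\lambda_i/\lambda'_i = \lambda^{(-1)^{n_i}}$, puisque ce quotient n'est invers\'e qu'au passage d'un genre $\II$ ; d'o\`u les formules annonc\'ees $b_i = \lambda^{(-1)^{n_i}} a_i$ et $b'_i = \lambda^{-(-1)^{n_i}} a'_i$.

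Il restera, dans une derni\`ere \'etape, \`a analyser la contrainte cyclique au rang $i = f-1$, o\`u les twists par $(\alpha, \alpha')$ et $(\beta, \beta')$ interviennent et o\`u l'on doit se recoller sur $P^{(f)} = P^{(0)}$. L'\'equation modifi\'ee donnera, apr\`es identification des coefficients scalaires, une relation entre $(\alpha, \alpha')$, $(\beta, \beta')$ et le bilan des \'echanges accumul\'es le long de la boucle. Si $n_f$ est pair, le couple $(\lambda_i, \lambda'_i)$ retrouve sa position initiale apr\`es avoir parcouru tous les indices et la relation impose simplement $\alpha = \beta$ et $\alpha' = \beta'$. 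Si $n_f$ est impair, la boucle se termine sur un \'echange global, ce qui fournit $\alpha/\beta = \lambda = \beta'/\alpha'$ et, par produit, $\alpha \alpha' = \beta \beta'$. L'obstacle principal reste la scalarisation des $P^{(i)}$ de la premi\`ere \'etape : elle n\'ecessite un contr\^ole fin des identit\'es dans $R[[u]]$ et fait intervenir de mani\`ere essentielle l'hypoth\`ese d'absence de mauvais genre.
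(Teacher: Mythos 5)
Votre strat\'egie est essentiellement celle du texte : on introduit les matrices de passage $P^{(i)}$ entre les deux \'ecritures, on constate qu'elles sont n\'ecessairement diagonales \`a coefficients constants inversibles, puis les relations $(P^{(i+1)})^{-1}G^{(i)}\varphi(P^{(i)})=H^{(i)}$ (avec le twist par $\mathrm{diag}(\alpha,\alpha')$ et $\mathrm{diag}(\beta,\beta')$ en $i=f-1$) donnent exactement vos r\'ecurrences, l'\'echange au passage de chaque genre $\II$, le suivi du quotient $\lambda_i/\mu_i$ et la contrainte cyclique finale. La seule diff\'erence est que le texte d\'eduit la forme diagonale des $P^{(i)}$ d'un examen de l'algorithme de r\'eduction de la proposition \ref{propecriturejolie2}, l\`a o\`u vous proposez de l'\'etablir directement par d\'eveloppement en s\'eries en $u^e$ et r\'ecurrence sur la valuation ; c'est bien l'\'etape d\'elicate, que vous laissez \`a l'\'etat d'esquisse comme le texte la traite bri\`evement. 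Petite rectification : les entr\'ees antidiagonales de $P^{(i)}$ sont divisibles par $u^{\gamma_i}$ ou $u^{e-\gamma_i}$ avec $1\leq\gamma_i\leq e-1$, donc n'ont pas de terme constant ; m\^eme au voisinage d'un genre $\II$, les $P^{(i)}$ restent diagonales et l'\'echange que vous d\'ecrivez se lit dans les relations $\lambda_i=\mu_{i+1}$, $\mu_i=\lambda_{i+1}$ --- ce qui est d'ailleurs exactement ce que vous utilisez ensuite, de sorte que la fin de votre calcul est identique \`a celle du texte.
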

Il est alors immédiat de déduire le résultat suivant :
\begin{cor}
\label{cor:uniciteecriture}
Dans le cadre de la proposition
\ref{prop:uniciteecriture}, nous avons les résultats suivants :
\begin{enumerate}
\item si le genre de $\MK$ possède un nombre impair de facteurs $\II$, 
alors $\MK$ admet une unique écriture avec le paramètre $\alpha$ égal à 
$1$ ;
\item si le genre de $\MK$ possède un nombre pair de facteurs $\II$ et
s'il existe un paramètre $a_i$ ou $a'_i$ qui est inversible dans $R$
(\emph{i.e.} non nul dans le corps résiduel $k_E$), alors $\MK$ admet
une unique écriture où ce paramètre est égal à $1$.
\end{enumerate}
\end{cor}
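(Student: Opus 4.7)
The plan is to deduce this corollary directly from Proposition~\ref{prop:uniciteecriture}, which describes the ambiguity between two writings of the same module~$\MK$ as being parametrized by a single scalar $\lambda \in R^\times$, subject to constraints depending on the parity of $n_f$.

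First, for part~(1), where $n_f$ is odd, Proposition~\ref{prop:uniciteecriture} asserts the relation $\lambda = \alpha/\beta$ between the leading parameters of any two writings. Starting from any given writing with parameter $\alpha$, imposing $\beta = 1$ on the target writing forces $\lambda = \alpha$, which is automatically an element of $R^\times$ since $\alpha$ is. This unique value of $\lambda$ then determines $\beta'$ via $\alpha\alpha' = \beta\beta'$, as well as every $b_i$ and $b'_i$ through the transformation rules. Conversely, any two writings that both satisfy the normalization $\alpha = 1$ would force $\lambda = 1$ and thus coincide. Existence and uniqueness of the normalized writing follow.

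Next, for part~(2), where $n_f$ is even, the parameters $\alpha$ and $\alpha'$ are intrinsic invariants of $\MK$ (because $\alpha = \beta$ and $\alpha' = \beta'$), so the only remaining freedom lies in the choice of $\lambda$. Assuming that some $a_i$ is invertible in~$R$ (the case of an invertible $a'_i$ being strictly symmetric), I would impose $b_i = 1$ in the target writing. The rule $b_i = \lambda^{(-1)^{n_i}} a_i$ then yields the unique solution $\lambda = a_i^{-(-1)^{n_i}}$, which lies in $R^\times$ precisely because $a_i$ does. All other parameters of the normalized writing are then determined from this $\lambda$, and the same cancellation argument as above yields uniqueness.

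I do not anticipate any genuine obstacle: the corollary is a direct formal consequence of the moduli description provided by Proposition~\ref{prop:uniciteecriture}. The only mild subtlety lies in the bookkeeping of the sign exponent $(-1)^{n_i}$ and in verifying that the constructed $\lambda$ is indeed a unit of~$R$, which is immediate from the invertibility hypothesis on $a_i$ (or $a'_i$).
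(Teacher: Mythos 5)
Your proof is correct and matches the paper's approach: the paper itself treats the corollary as an immediate consequence of Proposition~\ref{prop:uniciteecriture} with no further argument, and your write-up is simply the natural unfolding of that deduction (solve for the unique $\lambda$ that achieves the normalization, note it is a unit, and observe that two normalized writings force $\lambda=1$). The only point you leave implicit, as does the paper, is that conversely any $\lambda\in R^\times$ actually produces a valid writing via the diagonal change of base appearing in the proof of Proposition~\ref{prop:uniciteecriture}; this is needed for the existence half and is clear from that proof.
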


\subsubsection{Polygones de Newton des éléments de $R[[u]]$}

Avant d'entamer la démonstra\-tion de la proposition 
\ref{propecriturejolie}, nous regroupons dans ce numéro quelques 
résultats (classiques) sur les polygones de Newton que nous utilisons
constamment dans la suite.

L'hypothèse de platitude que nous avons faite sur $R$ implique que 
l'uniformisante $\pE$ n'est pas diviseur de $0$ dans $R$. Nous pouvons 
ainsi définir une fonction $v_R : R \to \N \cup \{\infty\}$ comme suit : 
pour $x$ dans $R$, $v_R(x)$ désigne le plus grand entier $n$ tel que 
$\pE^n$ divise $x$ avec la convention $v_R(0) = +\infty$. Pour $x$ et 
$y$ dans $R$, nous avons :
\begin{itemize}
\item $v_R(xy) \geq v_R(x) + v_R(y)$ avec égalité dès que $x$ ou
$y$ est dans $\oE$, et
\item $v_R(x+y) \geq \min(v_R(x), v_R(y))$ avec égalité dès que
$v_R(x) \neq v_R(y)$.
\end{itemize}

\begin{definit} 
Soit $a = \sum_{i=0}^\infty a_i u^i$ un élément de $R[[u]]$.
Le \emph{polygone de Newton} $\NP(a)$ de $a$ est l'enveloppe convexe
dans le plan des points de coordonnées $(i, v_R(a_i))$ pour $i$
variant dans $\N$ et d'un point supplémentaire situé à l'infini 
dans la direction des ordonnées positives.
\end{definit}

\begin{lem}\label{polygoneNewton}
Soit $a, b\in R[[u]]$. Le polygone de Newton $\NP(ab)$ est inclus 
dans la somme de Minkowski $\NP(a) + \NP(b)$.

Si en outre $a$ ou $b$ est dans $\oE[[u]]$, alors l'inclusion précédente 
est une égalité.
\end{lem}

\begin{proof}
Écrivons $a = \sum_{i=0}^\infty a_i u^i$, $b = \sum_{i=0}^\infty b_i 
u^i$ et $ab = \sum_{i=0}^\infty c_i u^i$ où les $a_i$, les $b_i$ et les 
$c_i$ sont des éléments de $R$. Pour tout entier $s$, nous avons
$c_s = \sum_{i+j=s} a_i b_j$ et donc 
\begin{equation}
\label{eq:inegNewton}
v_R(c_s) \geq \min_{i+j = s} v_R(a_i) + v_R(b_j).
\end{equation}
L'inclusion $\NP(ab) \subset \NP(a) + \NP(b)$ en résulte directement.

Supposons maintenant que tous les $a_i$ appartiennent à $\oE$. Pour 
démontrer l'égalité souhaitée, il suffit de vérifier que tout point 
extrémal de $\NP(a) + \NP(b)$ est dans $\NP(ab)$. Supposons par 
l'absurde que ce ne soit pas le cas et notons $(s,v)$ les coordonnées 
d'un point $M$ qui est un contre-exemple. Par définition des polygones 
de Newton, notre supposition implique en particulier que l'inégalité
\eqref{eq:inegNewton} est stricte. Ainsi, il existe deux couples 
distincts $(i_1,j_1)$ et $(i_2,j_2)$ avec $i_1 + j_1 = i_2 + j_2 = 
s$ et 
$$\begin{array}{rcccl}
v & = & v_R(a_{i_1} b_{j_1}) \,\, = \,\, v_R(a_{i_1}) + v_R(b_{j_1}) \\
& = & v_R(a_{i_2} b_{j_2}) \,\, = \,\, v_R(a_{i_2}) + v_R(b_{j_2})
& < & v_R(c_s).
\end{array}$$
Les points $\big(i_1 + j_2, v_R(a_{i_1}) + v_R(b_{j_2})\big), \big(i_2 + 
j_1, v_R(a_{i_2}) + v_R(b_{j_1})\big)$ sont distincts et appartiennent à 
$\NP(a) + \NP(b)$ par construction. De plus, par ce qui précède, le 
milieu du segment qui les joint est $(s,v)$. Ainsi $(s,v)$ n'est pas un 
point extrémal de $\NP(a) + \NP(b)$, ce qui constitue une contradiction.
\end{proof}

\subsubsection{Démonstration de la proposition \ref{propecriturejolie}}

Nous ne traitons que le cas d'un module n'ayant pas mauvais genre, le cas général étant similaire.
Nous commençons par plusieurs lemmes préparatoires.

\begin{lem}\label{propdefgenrepassage}
Soit
$$
G=\begin{pmatrix} s_1&u^{e-\gamma}s_2\\
u^{\gamma}s_3& s_4\\ \end{pmatrix}$$
avec $s_j\in R[[u^e]]$ pour $1\leq j\leq 4$, $\gamma$ un entier entre $0$ et $e$ et $\det G=\alpha(u^e+p)$ pour $\alpha$ dans $R[[u^e]]$ inversible. Notons $\bar{s}_i \equiv s_i\pmod {u^e}$.
\begin{enumerate}[(i)]
\item Si $\bar{s}_4$ est inversible dans $R$ i.e. $\GG(G)=\I_\eta$, posons
$$\BB(G)=
\begin{pmatrix} 
\frac{s_1-u^eas_2}{u^e+p} & u^{e-\gamma}s_2\\
u^{\gamma} \: \frac{s_3-as_4}{u^e+p} & s_4
\end{pmatrix}$$
où $a$ dans  $R$ est défini par $a\equiv s_3/s_4\pmod {u^e+p}$. Alors
$$G=\BB(G)\begin{pmatrix}u^e+p&0\\ au^\gamma&1\\ \end{pmatrix}.$$
\item Si $\bar{s}_1$ est inversible dans $R$ i.e. $\GG(G)=\I_{\eta'}$, posons
$$\BB(G)=
\begin{pmatrix}s_1 & u^{e-\gamma} \: \frac{s_2-a's_1}{u^e+p}\\
u^{\gamma}s_3 & \frac{s_4-u^ea's_3}{u^e+p}
\end{pmatrix}$$
où $a'$ dans $R$ est défini par $a'\equiv s_2/s_1=u^{-e}s_4/s_3\pmod {u^e+p}$. Alors
$$G=\BB(G)\begin{pmatrix}1&a'u^{e-\gamma}\\ 0&u^e+p\\ \end{pmatrix}.$$
\item Sinon
$\GG(G)=\II$ et posons
$$\BB(G)=
\begin{pmatrix}
\frac{u^es_2-a's_1}{u^e+p} & u^{e-\gamma} \frac{s_1-as_2}{u^e+p} \\
u^{\gamma} \frac{s_4-a's_3}{u^e+p} & \frac{u^es_3-as_4}{u^e+p}
\end{pmatrix}$$
où $a,a'$ dans $R$ sont définis par $a \equiv s_1/s_2\pmod {u^e+p}$ et 
$a'\equiv s_4/s_3\pmod {u^e+p}$. Alors \\
$$G=\BB(G)\begin{pmatrix}a&u^{e-\gamma}\\ u^\gamma&a'\\ \end{pmatrix},$$
et $aa'=-p$.
\end{enumerate}
De plus, dans tous les cas, $\BB(G)\in \GL_2(R[[u]])$.
\end{lem}
\begin{proof}
(i) Commençons par montrer que la matrice $\BB(G)$ est bien définie. 
D'une part, comme $\bar{s}_4$ est inversible dans $R$, $s_4$ est 
inversible modulo $u^e+p$, ce qui montre que l'élément $a$ est bien 
défini. D'autre part, comme $\det G\equiv 0\pmod {u^e+p}$, nous avons 
encore $a\equiv u^{-e}s_1/s_2$. Nous en déduisons que $\BB(G)$ est à 
coefficients dans $R[[u]]$ et même, plus précisément, que la matrice 
$\BB(G)$ est de la forme
$$ \begin{pmatrix} \sigma_1 & u^{e-\gamma}\sigma_2\\
u^{\gamma}\sigma_3 & \sigma_4 \end{pmatrix} 
\mbox{ avec }\sigma_i\in R[[u^e]].$$
Le cas (ii) est analogue.

(iii) Supposons que $\bar{s}_1$ et $\bar{s}_4$ ne sont pas inversibles 
dans $R$. Rappelons que $\det G=\alpha(u^e+p)$ avec $\alpha$ dans $R[[u]]$ 
inversible. Ainsi $u^es_2s_3\equiv - u^e\alpha \pmod {\pE R[[u]]}$ et comme 
$u^e$ n'est pas diviseur de zéro dans $R[[u^e]]/\pE R[[u^e]]$, nous
obtenons $s_2s_3\equiv - \alpha \pmod {\pE R}$. Ainsi $s_2$ et $s_3$ 
sont inversibles dans $R[[u^e]]$, ce qui suffit à entraîner
que la matrice $\BB(G)$ est bien définie et à coefficients dans $R[[u]]$.

Enfin, de l'égalité $\det(G)=\alpha(u^e+p)$ avec $\alpha$ inversible 
dans $R$, nous déduisons que le déterminant de $\BB(G)$ est inversible 
dans $R[[u]]$ dans chacun des cas.
\end{proof}

Pour $t\in \N$, définissons les idéaux de $R[[u^e]]$
$$\textstyle
I_t=\Big\{ \sum_{i=0}^\infty a_iu^{ei}, a_i\in R, v_R(a_i)\geq t-
\frac{pi}{p-1} \Big\},$$
$$\textstyle
I_t^\varphi=\Big\{ \sum_{i=0}^\infty a_iu^{ei}, a_i\in R, v_R(a_i)\geq 
t- \frac i{p-1} \Big\}.$$
En particulier, nous avons $\varphi(I_t)\subset I_t^\varphi$.

\begin{lem}\label{lemtechnique1}
Supposons $t\in \N$. Soit $x\in R[[u^p]]$ tel que $(u^e+p)x\in 
I_t^\varphi$. Alors $x\in I_t$.
\end{lem}

\begin{proof} 
Si $(u^e+p)x\in I_t^\varphi$, son polygone de Newton est inclus dans la 
région du plan $D=\big\{ (a,b) \in\R^2, \, b\geq t- \frac a{p-1}\big\}$.
D'après le lemme \ref{polygoneNewton}, le polygone de Newton de $x$ est 
inclus dans le translaté $(-1,0)+D$, donc en notant $x=\sum_{i=0}^\infty 
a_i u^{ei}$, nous avons $v_R(a_i) \geq t- \frac{i+1}{p-1}$ pour tout 
$i\geq 0$. Nous en tirons $v_R(a_i)> t - \frac{pi}{p-1}$ pour tout $i \geq 0$ 
et, par suite, $x\in I_t$. 
\end{proof}

\begin{lem}\label{lemtechnique2}
Supposons $t\in \N^*$. Soit $x\in R[[u^e]]$ tel que 
$$(u^e+p)x\in u^{e}I_t^\varphi+\varphi(I_t\cap u^eR[[u^e]])+ p\pE^{t+1}R[[u^e]].$$
Alors $x\in I_{t+1}$.
\end{lem}
\begin{proof} Remarquons que, d'après le lemme \ref{polygoneNewton} :
\begin{itemize}
\item le polygone de Newton d'un élément de $u^{e} I_t^\varphi$ est inclus dans 
$$\textstyle
\big\{(a,b) \in\R^2 \,\, |\,\, a\geq 1,\, b\geq 0,\, b\geq t-\frac{a-1}{p-1}\big\},$$
\item le polygone de Newton d'un élément de $\varphi( I_t\cap u^eR[[u^e]])$ est inclus dans 
$$\textstyle
\big\{(a,b) \in\R^2 \,\ | \,\, a\geq p, \, b\geq 0, \, b\geq t-\frac a{p-1}\big\},$$
\item le polygone de Newton d'un élément de $p\pE^{t+1}R[[u^e]]$ est inclus dans
$$\{0\}\times[t+1+v_R(p),\infty[.$$
\end{itemize}
Ainsi le polygone de Newton de $(u^e+p)x$ est inclus dans l'enveloppe 
convexe des trois régions précédentes, représentées en gris sur la 
figure ci-après.

\begin{center}
\begin{tikzpicture}[scale=0.8]
\draw[->, thick] (-0.5,0)--(13,0);   
\draw[->, thick] (0,-0.5)--(0,8);   
\fill[opacity=0.3] (0,8)--(0,7)--(2,3)--(7,0.7)--(12,0)--(13,0)--(13,8)--cycle;
\draw[thick] (0,7)--(2,3)--(7,0.7)--(12,0);
\draw[very thick](0,5)--(2,2.5)--(5,0.7)--(10,0);
\draw[dotted] (0,3)--(2,3);
\draw[dotted] (2,0)--(2,3);
\draw[dotted] (0,2.5)--(2,2.5);
\draw[dotted] (2,0)--(2,2.5);
\draw[dotted] (0,0.7)--(7,0.7);
\draw[dotted] (7,0)--(7,0.7);
\draw[dotted] (0,0.7)--(5,0.7);
\draw[dotted] (5,0)--(5,0.7);
\node[below left, scale=0.8] at (0,0) { $0$ };
\node[below, scale=0.8] at (2,0) { $1$ };
\node[below, scale=0.8] at (5,0) { $p-1$ };
\node[below, scale=0.8] at (7,0) { $p\vphantom{-1}$ };
\node[below, scale=0.75] at (10,0) { $(p{-}1)t\vphantom{+1}$ };
\node[below, scale=0.75] at (12,0) { $(p{-}1)t+1$ };
\node[left, scale=0.8] at (0,7) { $t+1+v_R(p)$ };
\node[left, scale=0.8] at (0,5) { $t+1$ };
\node[left, scale=0.8] at (0,3) { $t$ };
\node[left, scale=0.8] at (0,2.5) { $t- \frac p{(p-1)^2}$ };
\node[left, scale=0.8] at (0,0.7) { $t- \frac p{p-1}$ };
\end{tikzpicture}
\end{center}

\noindent
Il suit alors du lemme \ref{polygoneNewton} que le 
polygone de Newton de $x$ est inclus dans la région délimitée par le 
trait en gras. Le lemme s'en déduit.
\end{proof}

\begin{lem} \label{lemjolieecriture}
Soit $t\in\N$ et
$$P'=\begin{pmatrix}\sigma'_1&u^{e-\gamma'}\sigma'_2\\
u^{\gamma'}\sigma'_3&\sigma'_4\\ 
\end{pmatrix}\in \GL_2(R[[u]])$$
avec 
$0\leq \gamma'\leq e$, $\sigma'_j\in R[[u^e]]$ pour $1\leq j\leq 4$ et
$$\left\{\begin{array}{l}\sigma_1'\equiv\sigma_4'\equiv 1\pmod {u^e, I_t},\cr
\sigma_2'\equiv\sigma_3'\equiv 0\pmod {I_t}.\end{array}\right.$$
Soit $c\in\{0,\ldots,p-1\}$ tel que $0\leq \gamma=p\gamma'-ec\leq e$. Alors
\begin{enumerate}[(i)]
\item
Si $G=\begin{pmatrix}u^e+p&0\\ a u^\gamma &1\\ \end{pmatrix}$ avec $a\in R$, alors
 $\GG(G)=\I_\eta=\GG(G\varphi(P'))$ et
 la matrice $\BB(G\varphi(P'))$ est de la forme
\begin{equation}\label{ecriturepassage}
\BB(G\varphi(P'))=\begin{pmatrix}\sigma_1&u^{e-\gamma}\sigma_2\\
u^{\gamma}\sigma_3&\sigma_4\\ \end{pmatrix}
\mbox{ avec }\left\{\begin{array}{l} 
\sigma_1\equiv\sigma_4\equiv 1\pmod {u^e, I_{t+1}}, \cr
\sigma_2\equiv\sigma_3\equiv 0\pmod {\pE,u^e}, \cr
\sigma_2\equiv0 \pmod {I_{t+1}},\cr 
\sigma_3\equiv 0\pmod {I_t}.\cr\end{array}\right.
\end{equation}
De plus, 
$$\sigma_3\equiv \left\{\begin{array}{ll} 0\pmod {I_{t+1}}&\mbox{ si }c\not\in\{1,p-1\},\cr
\varphi(\sigma'_3)\pmod {I_{t+1}}&\mbox{ si }c=1,\cr
-a^2\varphi(\sigma'_2)\pmod {I_{t+1}}&\mbox{ si }c=p-1.\cr
\end{array}\right.$$
\item
Si $G=\begin{pmatrix}1&a'u^{e-\gamma}\\0& u^e+p\\ \end{pmatrix}$, avec $a'\in R$, alors
 $\GG(G)=\I_{\eta'}=\GG(G\varphi(P'))$ et
 la matrice $\BB(G\varphi(P'))$ est de la forme
\begin{equation}
\BB(G\varphi(P'))=\begin{pmatrix}\sigma_1&u^{e-\gamma}\sigma_2\\
u^{\gamma}\sigma_3&\sigma_4\\ \end{pmatrix}
\mbox{ avec }\left\{\begin{array}{l} 
\sigma_1\equiv\sigma_4\equiv 1\pmod {u^e, I_{t+1}}, \cr
\sigma_2\equiv\sigma_3\equiv 0\pmod {\pE,u^e}, \cr
\sigma_3\equiv0 \pmod {I_{t+1}},\cr 
\sigma_2\equiv 0\pmod {I_t}.\cr\end{array}\right.
\end{equation}
De plus, 
$$\sigma_2\equiv \left\{\begin{array}{ll} 0\pmod {I_{t+1}}&\mbox{ si }c\not\in\{0,p-2\},\cr
\varphi(\sigma'_2)\pmod {I_{t+1}}&\mbox{ si }c=p-2,\cr
-a'^2\varphi(\sigma'_3)\pmod {I_{t+1}}&\mbox{ si }c=0.\cr
\end{array}\right.$$
\item
Si $G=\begin{pmatrix}a&u^{e-\gamma}\\ u^\gamma &a'\\ \end{pmatrix}$, avec $a,a'\in R$ et $aa'=-p$ alors
 $\GG(G)=\II=\GG(G\varphi(P'))$ et
 la matrice $\BB(G\varphi(P'))$ est de la forme
\begin{equation}
\BB(G\varphi(P'))=\begin{pmatrix}\sigma_1&u^{e-\gamma}\sigma_2\\
u^{\gamma}\sigma_3&\sigma_4\\ \end{pmatrix}
\mbox{ avec }\left\{\begin{array}{l} 
\sigma_1\equiv\sigma_4\equiv 1\pmod {u^e, I_{t+1}}, \cr 
\sigma_2\equiv\sigma_3\equiv 0 \pmod {\pE,u^e},\cr
\sigma_2\equiv\sigma_3\equiv0 \pmod {I_{t+1}}.\cr\end{array}\right.
\end{equation}

\end{enumerate}
\end{lem}
\begin{proof} 
Nous traitons en détails le cas (i), le cas (ii) est similaire. Un premier calcul donne
$$G\varphi(P')=
\begin{pmatrix}
(u^e+p)\varphi(\sigma_1') & u^{e-\gamma}(u^e+p)u^{e(p-1-c)}\varphi(\sigma_2')\\
u^{\gamma}(a\varphi(\sigma'_1)+u^{ec}\varphi(\sigma_3')) & 
au^{e(p-c)}\varphi(\sigma_2')+\varphi(\sigma_4')
\end{pmatrix}.$$
Remarquons que $au^{e(p-c)}\varphi(\sigma_2')+\varphi(\sigma_4')\equiv \sigma_4'\pmod {\m_{R[[u^e]]}}$ et $\det P'\equiv\sigma_1'\sigma_4'\pmod {\m_{R[[u^e]]}}$ est inversible car $P'\in\GL_2(R[[u]])$. Donc $\bar{\sigma}_4'$ est inversible dans $R$ et $\GG(G\varphi(P'))=I_\eta$. Il reste à montrer que $\BB(G\varphi(P'))$ satisfait (\ref{ecriturepassage}). 
Soit $b\in R$ tel que
$$b\equiv \frac{a\varphi(\sigma'_1)+u^{ec}\varphi(\sigma_3')}{
au^{e(p-c)}\varphi(\sigma_2')+\varphi(\sigma_4')}\pmod {u^e+p}.$$
Ainsi $\BB(G\varphi(P'))=\begin{pmatrix}\sigma_1&u^{e-\gamma}\sigma_2\\
u^{\gamma}\sigma_3&\sigma_4\\ \end{pmatrix}$ pour
$$\left\{\begin{array}{rcl}
\sigma_1&=&\varphi(\sigma'_1)-u^{e(p-c)}b\varphi(\sigma_2'),\cr
\sigma_2&=&(u^e+p)u^{e(p-1-c)}\varphi(\sigma_2'),\cr
(u^e + p)\: \sigma_3&=&a\varphi(\sigma'_1)+u^{ec}\varphi(\sigma_3')-abu^{e(p-c)}\varphi(\sigma_2')-b\varphi(\sigma_4'),\cr
\sigma_4&=&au^{e(p-c)}\varphi(\sigma'_2)+\varphi(\sigma_4').\cr\end{array}\right.$$
Des hypothèses sur $\sigma'_j$, $1\leq j\leq 4$ découle le résultat souhaité pour $\sigma_1,\sigma_4$. Pour $\sigma_2$, il suffit d'appliquer le lemme \ref{lemtechnique1}.  Pour $\sigma_3$,  nous avons
$$(u^e+p)\sigma_3=(a-b)+a\varphi(\sigma_1'-1)-b\varphi(\sigma_4'-1)+u^{ce}\varphi(\sigma_3')-abu^{e(p-c)}\varphi(\sigma_2').$$
Or
$$\frac{a\varphi(\sigma_1')+u^{ce}\varphi(\sigma_3')}{ au^{e(p-c)}\varphi(\sigma_2')+\varphi(\sigma_4')}\equiv a\pmod {I_t^{\varphi}}$$
 Comme $I_t^\varphi\pmod {u^e+p}=\pE^tR$, nous avons $a\equiv b\pmod {\pE^tR}$. Donc $(u^e+p)\varphi(\sigma_3)\in I_t^{\varphi}$. Le lemme \ref{lemtechnique1} implique $\sigma_3\in I_t$. Enfin 
$$ \frac{a\varphi(\sigma_1')+u^{ce}\varphi(\sigma_3')}{ au^{e(p-c)}\varphi(\sigma_2')+\varphi(\sigma_4')}=a\frac{\varphi(\sigma_1')}{\varphi(\sigma'_4)}+u^{ce}\frac{\varphi(\sigma'_3)}{\varphi(\sigma'_4)}-a^2u^{e(p-c)}\frac{\varphi(\sigma_1')\varphi(\sigma'_2)}{\varphi(\sigma'_4)^2}\pmod{ u^{2e}}.$$ 
Nous en déduisons  que, si $c\not\in\{1,p-1\}$, alors :
$$(u^e+p)\sigma_3\in u^{e}I_t^\varphi+\varphi(I_t\cap u^eR[[u^e]])+ p\pE^{t+1}R[[u^e]].$$
Si $c=1$,
$$(u^e+p)(\sigma_3-\varphi(\sigma_3')) \in u^{e}I_t^\varphi+\varphi(I_t\cap u^eR[[u^e]])+ p\pE^{t+1}R[[u^e]].$$
Si $c=p-1$,
$$(u^e+p)(\sigma_3+a^2\varphi(\sigma_2')) \in u^{e}I_t^\varphi+\varphi(I_t\cap u^eR[[u^e]])+ p\pE^{t+1}R[[u^e]].$$
Le lemme \ref{lemtechnique2} conclut.

\noindent
Dans le cas (iii), la stratégie est analogue. D'abord les termes diagonaux modulo $u^e$ de
$$G\varphi(P')=\begin{pmatrix}a\varphi(\sigma'_1)+u^{e(c+1)}\varphi(\sigma_3')&
u^{e-\gamma}(au^{e(p-1-c)}\varphi(\sigma_2')+\varphi(\sigma_4'))\\
u^{\gamma}(\varphi(\sigma_1')+a'u^{ec}\varphi(\sigma_3'))& u^{e(p-c)}\varphi(\sigma_2')+a'\varphi(\sigma_4')\\
\end{pmatrix}$$
ne sont pas inversibles dans $R$, donc $\GG(G\varphi(P'))=\II$. Ensuite nous définissons $b,b'\in R$ tels que
$$\begin{array}{ll}
\displaystyle
b\equiv\frac{a\varphi(\sigma'_1)+u^{e(c+1)}\varphi(\sigma_3')}{au^{e(p-1-c)}\varphi(\sigma_2')+\varphi(\sigma_4')}\pmod {u^e+p} \medskip \\
\displaystyle
b'\equiv\frac{ u^{e(p-c)}\varphi(\sigma_2')+a'\varphi(\sigma_4')}{\varphi(\sigma_1')+a'u^{ec}\varphi(\sigma_3')}\pmod {u^e+p}.$$
\end{array}$$
 La matrice
$\BB(G\varphi(P'))$ est alors de la forme $\begin{pmatrix}\sigma_1&u^{e-\gamma}\sigma_2\\
u^{\gamma}\sigma_3&\sigma_4\\ \end{pmatrix}$ pour
$$\left\{\begin{array}{rcl}
(u^e + p) \sigma_1&=& u^e(au^{e(p-1-c)}\varphi(\sigma'_2)+\varphi(\sigma_4'))-b'(a\varphi(\sigma_1')+u^{e(c+1)}\varphi(\sigma_3')) \cr
(u^e + p) \sigma_2&=& a\varphi(\sigma_1')+u^{e(c+1)}\varphi(\sigma_3')-b(au^{e(p-1-c)}\varphi(\sigma'_2)+\varphi(\sigma_4')) \cr
(u^e + p) \sigma_3&=& u^{e(p-c)}\varphi(\sigma_2')+a'\varphi(\sigma_4')-b'(\varphi(\sigma_1')+a'u^{ec}\varphi(\sigma_3')) \cr
(u^e + p) \sigma_4&=& u^e(\varphi(\sigma_1')+a'u^{ec}\varphi(\sigma_3'))-b(u^{e(p-c)}\varphi(\sigma_2')+a'\varphi(\sigma_4'))
\end{array}\right.$$
Nous avons ainsi $(u^e+p)\sigma_i\equiv 0\pmod {I_f^\varphi}$ pour $i=2,3$.
Comme dans le cas (i), nous obtenons $b\equiv a\pmod{\pE^tR}$ et $b'\equiv a'\pmod {\pE^tR}$ et $ab'\equiv a'b \equiv -p \pmod {\pE^tR}$. Donc $(u^e+p)(\sigma_i-1) \equiv 0\pmod {I_t^\varphi}$ pour $i=1, 4$.
Le lemme \ref{lemtechnique1} donne alors
$$\left\{\begin{array}{cc} \sigma_1\equiv\sigma_4\equiv 1\pmod {I_{t+1}}, \cr 
\sigma_2\equiv\sigma_3\equiv0 \pmod {I_{t+1}}.\cr\end{array}\right.$$
Les définitions de $b$ et $b'$ permettent d'obtenir les autres congruences annoncées.
\end{proof}

Soit $\MK$ un module de Breuil--Kisin de rang 2 de genre $\eta\oplus \eta'$ et de Frobenius donn\'e par la famille fixée des matrices $\underline{G}=(G^{(0)},\ldots,G^{(f-1)})$. Convenons d'étendre les suites $(\gamma_s)$ et $(G^{(s)})$ en des suites 
périodiques de période $f$ définies sur $\N$ tout entier. Soit la suite de matrices $(P^{(s)})_{s\in\N}$ définies par
$$P^{(0)}=\Id,\quad \BB(G^{(s)}\varphi(P^{(s)}))=\Delta^{(s)} P^{(s+1)}$$
avec 
$$P^{(s+1)}=\begin{pmatrix}\sigma_1^{(s+1)}& u^{e-\gamma_{s+1}}\sigma_2^{(s+1)}\\
u^{\gamma_{s+1}}\sigma_3^{(s+1)}&\sigma_4^{(s+1)}\end{pmatrix},$$
où $ \sigma_j^{(s+1)}\in R[[u^e]]$ ($1\leq j\leq 4$), $\sigma_1^{(s+1)}\equiv \sigma_4^{(s+1)}\equiv 1\pmod {u^e}$
et $\Delta^{(s)}$ est une matrice diagonale à coefficients dans $R$.
\begin{lem} 
\label{leminductionPassage}
Supposons que $\MK$ n'ait pas mauvais genre.
Alors, pour tout $0\leq j\leq f-1$ la suite des $P^{(j+fn)}$ converge vers une matrice $R^{(j)}$ dans $\mbox{GL}_2(R[[u]])$ quand $n$ tend vers l'infini.
\end{lem}

\begin{proof}
Étant donné deux nombres entiers $s$ et $t$ ainsi qu'une matrice $M$
prenant la forme :
$$M = \begin{pmatrix} \sigma_1 & u^{e-\gamma_{s}}\sigma_2\\ 
u^{\gamma_{s}}\sigma_3 & \sigma_4 \end{pmatrix}$$
convenons de dire que $M$ est \emph{$t$-proche de l'identité} si :
$$\sigma_1 \equiv \sigma_4 \equiv 1 \pmod{I_t, u^e}
\quad \text{et} \quad
\sigma_2 \equiv \sigma_3 \equiv 0 \pmod{I_t}.$$

Une récurrence sur $s$, à partir du lemme \ref{lemjolieecriture}, montre que, pour tout entier $s$, la matrice
$Q^{(s)} = P^{(s+f)} \cdot (P^{(s)})^{-1}$
est $t$-proche de l'identité pour $t$ égal à la partie entière de $\frac 
s f$. Ceci permet clairement de conclure.

\end{proof}

Sous les hypothèses du lemme \ref{leminductionPassage}, nous obtenons 
par passage à la limite des matrices 
$R^{(i)}=\lim_{n\rightarrow\infty}P^{(i+fn)}$, $0\leq i\leq f-1$ qui 
définissent un module de Breuil--Kisin isomorphe à $\MK$ de Frobenius 
donné par la famille de matrices : 
$$H^{(i)} = (R^{(i+1)})^{-1} \cdot G^{(i)} \cdot \varphi(R^{(i)})
\quad (0 \leq i \leq f-1)$$
qui prennent toutes l'une des trois formes suivantes :
\begin{itemize}
\item $\Delta^{(i)}\begin{pmatrix}u^e+p&0\\ au^{\gamma_{i+1}}& 1\end{pmatrix}$\\
\item $\Delta^{(i)}\begin{pmatrix}a&u^{e-\gamma_{i+1}}\\u^{\gamma_{i+1}}& b\end{pmatrix}$\\
\item $\Delta^{(i)}\begin{pmatrix}1 &u^{e-\gamma_{i+1}}\\0& u^e+p\end{pmatrix}$
\end{itemize}
où $\Delta^{(i)}$ est une matrice diagonale à coefficients dans $R$. Une dernière normalisation des vecteurs de base pour éliminer ces matrices diagonales $0\leq i<f-2$ permet d'obtenir l'écriture canonique suivante :

\begin{prop} \label{propecriturejolie2}

Soit $\MK=\MK^{(0)}\times\ldots\times\MK^{(f-1)}$ un module de 
Breuil--Kisin libre de rang 2 sur $R \hat\otimes \SK$ de type galoisien 
$\eta\otimes \eta'$ et de type de Hodge $\vv_0$ satisfaisant les 
hypothèses du lemme \ref{lemjolieecriture}.
Alors il existe $\alpha,\alpha'$ inversibles dans 
$R$ et pour tout $i\in \{0,\ldots,f-1\}$, il existe une base 
$e_\eta^{(i)}, e_{\eta'}^{(i)}$ de $\MK^{(i)}$ compatible à l'action de 
$\Gal(K/F)$ telle que si $0\leq i\leq f-2$ la matrice $G^{(i)}$ (resp. si $i=f-1$, la matrice 
$\begin{pmatrix}\alpha^{-1}&0\\0&\alpha'^{-1}\\ \end{pmatrix}$ $G^{(i)}$)
de $\varphi:\MK^{(i)}\rightarrow\MK^{(i+1)}$ soit de l'une des trois 
formes suivantes :
\begin{itemize}
\item {\bf Genre $\I_\eta$}:
$\begin{pmatrix} u^{e}+p& 0\\a_iu^{\gamma_{i+1}}& 1 \end{pmatrix}$ 
pour $a_i\in R$,
\item {\bf Genre $\I_{\eta'}$}:
$\begin{pmatrix} 1&a'_iu^{e-\gamma_{i+1}}\\0& u^e+p \end{pmatrix}$
pour $a'_i\in R$,
\item {\bf Genre $\II$}:
$\begin{pmatrix} a_i&u^{e-\gamma_{i+1}}\\u^{\gamma_{i+1}}& a'_i \end{pmatrix}$ 
avec $a_i,a'_i\in \m_R$ et $a_ia'_i=-p$.
\end{itemize}
\end{prop}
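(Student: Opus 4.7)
The plan is to produce the announced canonical basis by successive approximation, iteratively applying the factorization provided by Lemma \ref{propdefgenrepassage}. First, I extend the sequences $(\gamma_s)$ and $(G^{(s)})$ periodically to $s \in \N$ with period $f$ and set $P^{(0)} = \Id$. Then, recursively on $s$, I define a diagonal matrix $\Delta^{(s)} \in \GL_2(R)$ and a matrix $P^{(s+1)}$ of the form \eqref{formechgtbase} with diagonal entries congruent to $1$ modulo $u^e$, uniquely determined by
$$\BB(G^{(s)} \varphi(P^{(s)})) = \Delta^{(s)} \cdot P^{(s+1)}.$$
This recursion is well-posed since $G^{(s)} \varphi(P^{(s)})$ has the same genre as $G^{(s)}$ (the genre is read off from the residue modulo $u^e$, which is preserved by twisting by $\varphi(P^{(s)})$, as the latter is congruent to the identity modulo $u^e$).

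Next, I invoke Lemma \ref{leminductionPassage}: under the assumption that $\MK$ has no bad genre, each of the $f$ subsequences $(P^{(i+fn)})_{n \geq 0}$, for $0 \leq i \leq f-1$, converges in $\GL_2(R[[u]])$ to a matrix $R^{(i)}$. I then take these limits as the transition matrices between the original bases and new ones of each $\MK^{(i)}$ (still compatible with the descent action of $\Gal(L/F)$, since we only change the $R[[u]]$-linear part). In the new basis, the Frobenius matrices take the form $H^{(i)} = (R^{(i+1)})^{-1} G^{(i)} \varphi(R^{(i)}) = \Delta^{(i)} M^{(i)}$, where $\Delta^{(i)} \in \GL_2(R)$ is diagonal and $M^{(i)}$ is in one of the three canonical forms announced, with parameters $a_i, a'_i$ in $R$ and $a_i a'_i = -p$ in genre~$\II$.

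It remains to absorb the prefactors $\Delta^{(0)}, \ldots, \Delta^{(f-2)}$ by a further rescaling of the basis vectors $e_\eta^{(i)}, e_{\eta'}^{(i)}$ by scalars in $R^\times$. The key observation is that each of the three canonical forms is stable under simultaneous left- and right-multiplication by scalar diagonal matrices, at the cost of modifying the parameters: in genres $\I_\eta$ and $\I_{\eta'}$ only the scalar parameter $a_i$ or $a'_i$ gets rescaled, while in genre $\II$ the two parameters are rescaled by reciprocal factors, so the constraint $a_i a'_i = -p$ is preserved. Starting with the basis at position $0$ fixed, a cascading choice of scalars at positions $1, 2, \ldots, f-1$ trivializes $\Delta^{(0)}, \Delta^{(1)}, \ldots, \Delta^{(f-2)}$, and the residual diagonal factor $\mathrm{diag}(\alpha, \alpha')$ remains at position $f-1$.

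The main obstacle is the convergence statement of Lemma \ref{leminductionPassage}, which amounts to showing that $P^{(s+f)} (P^{(s)})^{-1}$ is $\lfloor s/f \rfloor$-close to the identity in the filtration defined by the ideals $I_t$. The delicate point is that the off-diagonal entries must gain exactly one level of the filtration per cycle of length $f$; this requires the Newton polygon estimates of Lemma \ref{lemtechnique2} combined with the explicit congruences for the image of $\BB$ established in Lemma \ref{lemjolieecriture} for each of the three genres, and the exclusion of bad genres is exactly what ensures that no compensating phenomenon breaks this gain of regularity around the full cycle.
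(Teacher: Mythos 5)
Votre démonstration est correcte et suit essentiellement la même voie que celle de l'article : récurrence $\BB(G^{(s)}\varphi(P^{(s)}))=\Delta^{(s)}P^{(s+1)}$ fondée sur le lemme \ref{propdefgenrepassage}, convergence des sous-suites $(P^{(i+fn)})_n$ via le lemme \ref{leminductionPassage} (lui-même reposant sur les lemmes \ref{lemjolieecriture} et \ref{lemtechnique2}), puis absorption des facteurs diagonaux $\Delta^{(i)}$ pour $0\leq i\leq f-2$ par renormalisation scalaire des bases, ne laissant que $\mathrm{diag}(\alpha,\alpha')$ en position $f-1$. Seule petite imprécision : la conservation du genre ne vient pas de ce que $\varphi(P^{(s)})$ serait congrue à l'identité modulo $u^e$ (ce qui est faux en général), mais est précisément l'une des conclusions du lemme \ref{lemjolieecriture}, que vous invoquez de toute façon.
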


Comparons enfin deux écritures obtenues via la proposition \ref{propecriturejolie} d'un même module $\MK$. Pour la commodité du lecteur, nous rappelons ici l'énoncé (proposition \ref{prop:uniciteecriture}) qu'il nous faut montrer.
\begin{prop}
Nous conservons les hypothèses de la proposition \ref{propecriturejolie2} 
et nous supposons données deux écritures de $\MK$ correspondant à une
suite de genres $(g_0, \ldots, g_{f-1})$  et 
à des paramètres $(\alpha, \alpha', a_0, a'_0, \ldots, a_{f-1}, 
a'_{f-1})$ et $(\beta, \beta', b_0, b'_0, \ldots, b_{f-1}, b'_{f-1})$
respectivement.

En notant $n_i$ 
le nombre de $\II$ parmi $g_0, \ldots, g_{i-1}$, il existe un élément 
$\lambda$ de $R^\times$ vérifiant :
$$\text{pour tout } i \in \{0,  \ldots, f-1\}, \quad
b_i = \lambda^{(-1)^{n_i}} 
a_i \text{ et } b'_i = \lambda^{-(-1)^{n_i}} a'_i$$
et, de plus :
\begin{itemize}
\item si $n_f$ est pair, $\alpha = \beta$, $\alpha' = \beta'$,\\
tandis que
\item si $n_f$ est impair, $\alpha \alpha' = \beta \beta'$ et
$\lambda = \frac{\alpha}{\beta} = \frac{\beta'}{\alpha'}.$
\end{itemize}
\end{prop}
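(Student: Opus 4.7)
Le plan est d'analyser la matrice de passage $P^{(i)}$ sur chaque facteur $\MK^{(i)}$ entre les bases des deux écritures canoniques. Par compatibilité avec la donnée de descente, $P^{(i)}$ prend nécessairement la forme \eqref{formechgtbase} à coefficients $s_j^{(i)}$ dans $R[[u^e]]$, et est reliée à $P^{(i+1)}$ par l'équation de changement de base $G_a^{(i)} \varphi(P^{(i)}) = P^{(i+1)} G_b^{(i)}$ pour $0 \leq i \leq f-2$ (avec les twists diagonaux appropriés en $i = f-1$), où $G_a^{(i)}$ et $G_b^{(i)}$ désignent les matrices de Frobenius des deux écritures canoniques.

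L'étape centrale, qui constitue le principal obstacle technique, consiste à établir que chaque $P^{(i)}$ est en réalité diagonale à coefficients constants dans $R^\times$, soit $P^{(i)} = \begin{pmatrix} \lambda_i & 0 \\ 0 & \lambda'_i \end{pmatrix}$. L'idée est d'exploiter l'extrême rigidité des matrices $G_a^{(i)}$ et $G_b^{(i)}$, dont les coefficients non nuls sont uniquement $u^e + p$, $1$, ou un scalaire de $R$ (voire $u^{\gamma_{i+1}}$ ou $u^{e-\gamma_{i+1}}$ en genre $\II$). Une analyse cas par cas de l'équation $G_a^{(i)} \varphi(P^{(i)}) = P^{(i+1)} G_b^{(i)}$ montre, via des contrôles sur les ordres en $u$ analogues à ceux du lemme \ref{polygoneNewton}, que tout terme d'ordre strictement positif en $u^e$ dans les $s_j^{(i)}$ serait incompatible avec la forme canonique. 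On retrouve aussi ce résultat en notant que le procédé itératif du lemme \ref{lemjolieecriture}, convergeant (lemme \ref{leminductionPassage}) vers l'écriture canonique pour toute base initiale sur $\MK^{(0)}$, produit à partir de deux bases initiales une écriture identique si et seulement si celles-ci diffèrent par un rescaling diagonal constant.

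Une fois la forme diagonale à coefficients constants acquise, et en utilisant que $\varphi$ agit trivialement sur $R$, l'équation $\begin{pmatrix} \lambda_{i+1} & 0 \\ 0 & \lambda'_{i+1} \end{pmatrix} G_b^{(i)} = G_a^{(i)} \begin{pmatrix} \lambda_i & 0 \\ 0 & \lambda'_i \end{pmatrix}$ se résout immédiatement. Pour $g_i \in \{\I_\eta, \I_{\eta'}\}$, on obtient $\lambda_{i+1} = \lambda_i$ et $\lambda'_{i+1} = \lambda'_i$ ; pour $g_i = \II$, la présence des entrées $u^{\gamma_{i+1}}$ et $u^{e-\gamma_{i+1}}$ dépourvues de coefficient scalaire force l'échange $\lambda_{i+1} = \lambda'_i$, $\lambda'_{i+1} = \lambda_i$. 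Dans les trois cas, l'identification des coefficients restants donne $b_i = (\lambda_i/\lambda'_i)\, a_i$ et $b'_i = (\lambda'_i/\lambda_i)\, a'_i$. En posant $\lambda = \lambda_0/\lambda'_0 \in R^\times$, une récurrence immédiate sur $i$ fournit $\lambda_i/\lambda'_i = \lambda^{(-1)^{n_i}}$, d'où les relations annoncées pour $b_i$ et $b'_i$.

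Il reste à traiter l'équation cyclique en $i = f-1$ faisant intervenir les twists $\begin{pmatrix}\alpha&0\\0&\alpha'\end{pmatrix}$ et $\begin{pmatrix}\beta&0\\0&\beta'\end{pmatrix}$. Selon la parité de $n_{f-1}$, on a $(\lambda_{f-1}, \lambda'_{f-1}) = (\lambda_0, \lambda'_0)$ ou $(\lambda'_0, \lambda_0)$, et selon que $g_{f-1}$ vaut $\II$ ou non, un éventuel échange supplémentaire intervient dans l'équation. Le regroupement des quatre sous-cas en fonction de la parité de $n_f$ mène directement, par identification, aux conclusions : $\alpha = \beta$ et $\alpha' = \beta'$ lorsque $n_f$ est pair, et $\beta = \alpha/\lambda$, $\beta' = \lambda \alpha'$ lorsque $n_f$ est impair, soit de façon équivalente $\alpha \alpha' = \beta \beta'$ et $\lambda = \alpha/\beta = \beta'/\alpha'$.
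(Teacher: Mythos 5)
Votre preuve est correcte et suit essentiellement la même démarche que celle de l'article : on ramène le problème à montrer que les matrices de passage $P^{(i)}$ sont diagonales à coefficients constants (ce que l'article attribue à un \og examen attentif de l'algorithme de réduction \fg{} et que vous justifiez, comme lui, par un contrôle des ordres en $u$ le long du cycle), puis on résout genre par genre la relation de commutation et on pose $\lambda = \lambda_0/\lambda'_0$. Un bémol : votre seconde justification de la diagonalité (\og le procédé itératif produit une écriture identique ssi les bases initiales diffèrent d'un rescaling diagonal \fg) est en réalité une reformulation de l'unicité à établir et ne constitue donc pas un argument indépendant, contrairement au contrôle des degrés qui, lui, est probant.
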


\begin{proof}
Soit $(G^{(i)})_{0\leq i\leq f-1}$ et $(H^{(i)})_{0\leq i\leq f-1}$ les 
matrices correspondant aux deux écritures de $\MK$. Il existe une 
famille $(P^{(i)})_{0\leq i\leq f-1}$ de matrices de passage qui 
préservent la donnée de descente et vérifient :
\begin{equation}\label{genredepassage}
(P^{(i+1)})^{-1}G^{(i)}\varphi(P^{(i)})=H^{(i)} 
\quad \text{pour } 0\leq i\leq f-1.
\end{equation}
Un examen attentif de l'algorithme de réduction de la preuve de la 
proposition \ref{propecriturejolie2}, montre que les $P^{(i)}$ sont
nécessairement de la forme 
$$P^{(i)}=\begin{pmatrix}\lambda_i&0\cr 0&\mu_i\cr\end{pmatrix}
\quad \text{pour } 0\leq i\leq f-1.$$
Les égalités \eqref{genredepassage}
conduisent pour $0\leq i\leq f-1$ :
\begin{itemize}
\item si $g_i=h_i=\I_\eta$ ou $\I_{\eta'}$,
$$\begin{array}{ll}\mbox{ si } 0\leq i\leq f-2 \, :&
\left\{\begin{array}{ll}
a_i=\mu_{i+1}b_i/\lambda_i, \, a_i'=\lambda_{i+1}b'_i/\mu_i, \smallskip \cr 
\lambda_i=\lambda_{i+1}, \, \mu_i=\mu_{i+1},\cr\end{array}\right. \smallskip \cr
\mbox{ si } i=f-1\,:&
\left\{\begin{array}{ll}
a_{f-1}=\beta'\mu_0b_{f-1}/(\alpha'\lambda_{f-1}), \, a'_{f-1}=\beta\lambda_0b'_{f-1}/(\alpha\mu_{f-1})\cr
\mu_0=\alpha'\mu_{f-1}/\beta', \, \lambda_0=\alpha\lambda_{f-1}/\beta.\cr\end{array}\right.\cr\end{array}$$
\item si $g_i=h_i=\II$,
$$
\begin{array}{ll}\mbox{ si } 0\leq i\leq f-2 \, : &
\left\{\begin{array}{ll}
a_i=\lambda_{i+1}b_i/\lambda_i, \, a_i'=\mu_{i+1}b_i'/\mu_i,\cr
 \lambda_i=\mu_{i+1}, \, \mu_i=\lambda_{i+1},\cr\end{array}\right. \smallskip \cr
\mbox{ si } i=f-1 \, :&
\left\{\begin{array}{ll}a_{f-1}=\beta\lambda_0b_{f-1}/(\alpha\lambda_{f-1}), \, a'_{f-1}=\beta'\mu_0b'_{f-1}/(\alpha'\mu_{f-1}) \smallskip \cr
\mu_0=\alpha'\lambda_{f-1}/\beta',\, \lambda_0=\alpha\mu_{f-1}/\beta\cr\end{array}\right.\cr
 \end{array}$$
\end{itemize}
En posant $\lambda=\lambda_0/\mu_0\in R^{\times}$, nous obtenons le résultat annoncé.
\end{proof}

\subsection{Application au calcul d'espaces de déformations}
\label{ssec:calculdeform}

Adaptant les idées de \cite{BM2} à notre situation plus générale, nous 
expliquons, dans ce numéro, comment utiliser les résultats de la partie 
précédente pour déterminer les espaces de déformations $R^\psi(\vv_0, 
\eta \oplus \eta', \rhobar)$ avec $\rhobar$ absolument irréductible et $\vv_0$, $\ttt$ fixés comme au début de la partie \ref{sec:methode}.

Nous commençons par rappeler ou établir quelques résultats préparatoires 
concernant, d'une part, le calcul de la représentation résiduelle d'une 
représentation donnée par un module de Breuil--Kisin et, d'autre part, le 
calcul explicite de certains groupes d'extensions de représentations de 
$\Goo$.

\subsubsection{Calcul de la représentation résiduelle}
\label{sssec:represid}

Soient $R$ une $\oE$-algèbre locale complète noetherienne de corps 
résiduel $k_E$ et $\MK$ un module de Breuil--Kisin sur $R \hat\otimes_{\Zp} \SK$ libre de rang 
$2$, avec donnée de descente de $L$ à $F$. Comme $R$ est en particulier une $W$-algèbre, rappelons la décomposition \eqref{decompBKmodule}
$$\MK=\MK^{(0)}\oplus\ldots\oplus\MK^{(f-1)}.$$
Nous supposons de plus que chaque $\MK^{(i)}$ (pour $i$ entre $0$ et $f-1$) est muni d'une 
base $(e_\eta^{(i)},e_{\eta'}^{(i)})$ sur laquelle la donnée de descente agit par $\eta \oplus \eta'$ et telle que  la matrice $G^{(i)}$ 
du Frobenius $\varphi: \MK^{(i)}\longrightarrow\MK^{(i+1)}$ est donnée par l'une des 
trois formes de la proposition~\ref{propecriturejolie2}.

Notons $\rho_\MK $ la $R$-représentation libre de rang $2$ de 
$G_{\infty}$ associée à $\MK$ et $\rhobar_\MK$ sa réduction modulo 
l'idéal maximal de $R$.
Dans cette partie, nous expliquons comment s'assurer que la 
$k_E$-représentation résiduelle $\rhobar_\MK$ est irréductible et, 
lorsqu'elle l'est, comment la déterminer complètement. À partir de 
maintenant, pour alléger les notations, nous noterons $\phi$ à la
place de $\varphi^f$.

Soit $\bbM_L = \ocEL \otimes_{\SK_L} \MK$ le $\varphi$-module sur 
$R \hat\otimes_{\Zp} \ocEL$ (avec action semi-linéaire de $\Gal(L/F)$) 
associé à $\rho_{\MK | G_{L_\infty}}$ et soit $\bbM_F$ le $\varphi$-module 
sur $R \hat\otimes_{\Zp} \ocEF$ associé à $\rho_\MK$.
D'après la partie \ref{ssec:descente} (relation \eqref{eq:MKML}), $\bbM_F$ est l'ensemble des 
points fixes de $\bbM_L$ par l'action de $ \Gal (L / F)$ : pour 
tout $i$ entre $0$ et $f-1$, nous avons $\bbM_F^{(i)} = H^0(\Gal(L/F),
\bbM_L^{(i)})$. De l'écriture de $\MK$, nous déduisons

\begin{prop}\label{prop:matrices} 
Pour tout $i$ entre $0$ et $f-1$, la famille
$$(1\otimes u^{e - \gamma_i - \beta_i } e^{(i)}_{\eta} , 1\otimes u^{e - \beta_i } e^{(i)}_{\eta'})$$
est une base de $\bbM_F^{(i)}$ comme $R \hat\otimes_{W, \iota \circ \varphi^{-i}} \ocEF $-module. Dans ces bases, la matrice $B^{(i)}$ 
(resp., si $i = f-1$, la matrice 
$\begin{pmatrix}
\alpha^{-1} & 0 \\
0 & \alpha'^{-1} 
\end{pmatrix}
B^{(i)}
 $)
 du 
Frobenius de $\bbM_F^{(i)}$ dans $\bbM_F^{(i+1)}$ est :
	\begin{itemize}
	\item {\rm Genre $\Ieta$ :}
	$\begin{pmatrix}
	 v^{d_i}(v + p) & 0\\
	 a_i v^{d_i} & v^{p-1}
	\end{pmatrix}
	v^{- b_{f-1-i}}$ ;
	\item  {\rm Genre $\Ietap$ :}
	$\begin{pmatrix}
	 v^{d_i} & a'_i v^{p} \\
	 0 & v^{p-1}(v + p)
	\end{pmatrix}
	v^{- b_{f-1-i}}$ ;
	\item  {\rm Genre $\II$ :}
	$\begin{pmatrix}
	 a_i v^{d_i} & v^{p} \\
	 v^{d_i} &  a'_i v^{p-1} %
	\end{pmatrix}
	v^{- b_{f-1-i}}$.
	\end{itemize}
où $v=u^e$ et $d_i=p-1-c_{f-1-i}$ pour tout  $i$ dans $ \{0, \ldots, f-1\}$.
\end{prop}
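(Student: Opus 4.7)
The plan is to proceed in three steps: identify Galois-invariant elements providing a candidate basis of $\bbM_F^{(i)}$, verify the freeness, then compute the Frobenius matrix by direct calculation, organized according to the three genres.

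For the first two steps, recall from \S\ref{ssec:descente} that on the $i$-th factor of $R \hat\otimes_{\Zp} \ocEL$ the element $g \in \Gal(L/F)$ acts on $u_L$ by multiplication by $(\tau_0 \circ \varphi^{-i})(\varepsilon_f(g))$, while it acts on $e_\eta^{(i)}$ and $e_{\eta'}^{(i)}$ via $\eta$ and $\eta'$. Using the identifications $\eta = \tau_0 \circ \varphi^{-i} \circ \varepsilon_f^{\gamma_i + \beta_i}$ and $\eta' = \tau_0 \circ \varphi^{-i} \circ \varepsilon_f^{\beta_i}$, which come from the congruences $p^i \gamma_0 \equiv \gamma_i$ and $p^i \beta_0 \equiv \beta_i$ modulo $e$, I see that $g$ multiplies $1 \otimes u_L^k e_\eta^{(i)}$ by $(\tau_0 \circ \varphi^{-i})(\varepsilon_f(g))^{k + \gamma_i + \beta_i}$. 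Since $\varepsilon_f(g)$ has order dividing $e = q-1$, invariance is equivalent to $k \equiv -(\gamma_i + \beta_i) \pmod{e}$; the choice $k = e - \gamma_i - \beta_i$ produces the claimed vector, and analogously for $e_{\eta'}^{(i)}$. The $\Gal(L/F)$-invariants of $R \hat\otimes_{\iota \circ \varphi^{-i}, W} \ocEL$ are precisely the power series in $v := u_L^e$, so $\bbM_F^{(i)}$ splits into two $\Gal(L/F)$-eigenlines, each free of rank one over $R \hat\otimes_{\iota \circ \varphi^{-i}, W} \ocEF$ and generated by the element just constructed.

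For the Frobenius matrix, I will first establish the recursions $\gamma_{i+1} = p\gamma_i - e\, c_{f-1-i}$ and $\beta_{i+1} = p\beta_i - e\, b_{f-1-i}$ by straightforward expansion of the defining sums. In each of the three genres, applying $\varphi$ to $\tilde{e}_\eta^{(i)} := u_L^{e-\gamma_i-\beta_i} e_\eta^{(i)}$ and $\tilde{e}_{\eta'}^{(i)} := u_L^{e-\beta_i} e_{\eta'}^{(i)}$ using the matrix from Proposition \ref{propecriturejolie2} and re-expressing the result in the basis at index $i+1$ reduces to the identity
$$p(e - \gamma_i - \beta_i) - (e - \gamma_{i+1} - \beta_{i+1}) = e(d_i - b_{f-1-i})$$
(with $d_i = p - 1 - c_{f-1-i}$) and its companion $p(e - \beta_i) - (e - \beta_{i+1}) = e(p - 1 - b_{f-1-i})$. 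Together they simultaneously produce the exponents $v^{d_i}$ and $v^{p-1}$ appearing in the statement and isolate the common factor $v^{-b_{f-1-i}}$. The adjustment by $\smat{\alpha^{-1} & 0 \\ 0 & \alpha'^{-1}}$ at $i = f-1$ is a direct transcription of the analogous convention of Proposition \ref{propecriturejolie2}. The main obstacle is the careful bookkeeping in the first step (embeddings $W \hookrightarrow \oE$, twists by $\varphi^{-i}$, translation of the tame characters into the $\beta_i, \gamma_i$ language); once that setup is in place, the remaining exponent calculations are purely mechanical.
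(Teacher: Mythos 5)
Your proposal is correct and follows essentially the same route as the paper: one identifies the $\Gal(L/F)$-invariants componentwise using the eigenvalue $(\tau_0\circ\varphi^{-i})(\varepsilon_f(g))^{k+\gamma_i+\beta_i}$ (resp. $k+\beta_i$), which forces the exponent condition modulo $e$ and yields the stated basis, and then computes the Frobenius matrix by the diagonal change of basis, simplified exactly via $\gamma_{i+1}-p\gamma_i=-e\,c_{f-1-i}$ and its analogue $\beta_{i+1}-p\beta_i=-e\,b_{f-1-i}$. The only cosmetic slip is calling the invariants of $R\hat\otimes_{\iota\circ\varphi^{-i},W}\ocEL$ \og s\'eries formelles \fg\ en $v=u_L^e$, whereas they are the Laurent-type series forming $R\hat\otimes_{\iota\circ\varphi^{-i},W}\ocEF$; this does not affect the argument.
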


\begin{proof}
Soit $i$ un indice entre $0$ et $f-1$.
Tout élément de $\bbM_L^{(i)}$ s'écrit de manière unique sous la
forme $\lambda e^{(i)}_{\eta} + \lambda' e^{(i)}_{\eta'}$ où $\lambda$ 
et $\lambda'$ s'écrivent eux-mêmes respectivement
$$\lambda = \sum\limits_{j \in \Z} a_j u^j
\quad \text{et} \quad 
\lambda' = \sum\limits_{j \in \Z} a'_j u^j$$
avec $a_j, a'_j$ dans $R$ et $\lim\limits_{j \to - \infty} a_j = 
\lim\limits_{j \to - \infty} a'_j = 0$. Avec ces notations, un calcul 
immédiat montre que, pour tout $g \in \Gal(L/K)$, nous avons
$g\big(\lambda e^{(i)}_{\eta} + \lambda' e^{(i)}_{\eta'}\big) =
\mu e^{(i)}_{\eta} + \mu' e^{(i)}_{\eta'}$ avec
$$\mu = \sum\limits_{j \in \Z} a_j \cdot \varphi^{-i}\big( \varepsilon_f(g)^{j + \beta_i + \gamma_i} \big)\cdot u^j
\quad \text{et} \quad
\mu' = \sum\limits_{j \in \Z} a'_j \cdot \varphi^{-i}\big( \varepsilon_f(g)^{j + \beta_i} \big)\cdot u^j.
$$
Ainsi $\lambda e^{(i)}_{\eta} + \lambda' e^{(i)}_{\eta'}$ est fixe par 
$\Gal(L/F)$ si, et seulement si les seuls coefficients non nuls dans 
$\lambda$ (resp. $\lambda'$) sont ceux pour lesquels $j + \beta_i + 
\gamma_i$ (resp. $j + \beta_i$) est divisible par $e$. Ceci 
démontre que la famille indiquée est une base de $\bbM_F^{(i)}$.

Les matrices du Frobenius dans ces nouvelles bases sont obtenues en 
multipliant les matrices de la proposition \ref{propecriturejolie} à 
gauche par
$ 
\begin{pmatrix}
u^{\gamma_{i + 1}} & 0\\
0 & 1 \\
\end{pmatrix}
u^{\beta_{i + 1} - e}
$
et à droite par 
$ 
\begin{pmatrix}
u^{-p\gamma_i} & 0\\
0 & 1 \\
\end{pmatrix}
u^{-p(\beta_i - e)}
$
puis en utilisant la formule
$\gamma_{i + 1} - p\gamma_i = -ec_{f-1-i}$ et son équivalent pour les $b_i$.
\end{proof}

Par le théorème \ref{thm:equivrepPhimod} et la proposition 
\ref{propnomfarfelu}, $\rho_\MK$ est aussi la $R$-représentation 
associée à $\bbM_F^{(0)}$ muni d'une structure de $\phi$-module sur 
$R \hat\otimes_{W} \ocEF$ par l'endomorphisme $\varphi_{\bbM_F}^f$
que, par un léger abus de notations, nous notons encore $\phi$.
Avec les notations de la proposition \ref{prop:matrices}, pour tout $i$ 
entre $0$ et $f-1$, $B^{(i)}$ désigne la matrice associée à $\varphi$ de 
$\bbM_F^{(i)}$ dans $\bbM_F^{(i+1)}$. Ainsi, dans la base de 
$\bbM_F^{(0)}$ donnée par la proposition \ref{prop:matrices}, la matrice 
à coefficients dans $R \hat\otimes_{W} \ocEF$ de l'endomorphisme $\phi$ est :
\begin{equation}
\label{eq:prodB}
B = B^{(f-1)} \cdot \varphi(B^{(f-2)}) \cdots \varphi^{f-2}(B^{(1)}) \cdot
\varphi^{f-1}(B^{(0)}).
\end{equation}
Remarquons que, d'après la forme des matrices $B^{(i)}$ dans la proposition \ref{prop:matrices},
les coefficients de la matrice $B$ obtenue par la formule \eqref{eq:prodB} sont en fait dans $R \hat\otimes_{W} \SK_F = R[[v]]$.

Par construction (voir \S \ref{sssec:classifGoo}), la 
$k_E$-représentation $\rhobar_\MK$ est la $k_E$-représentation de 
$G_{\infty}$ associée à $k_E \otimes_R \bbM_F^{(0)}$, vu comme 
$\phi$-module sur $k_E((v))$.
Notons 
$\overline{B} = 
\begin{pmatrix}
a  & b\\
c & d
\end{pmatrix}
$ 
la réduction, à coefficients dans $k_E((v))$, de la matrice $B$. Nous 
définissons la trace et le déterminant tordus :
$$\textstyle
T(\overline{B}) = \phi(a) + d\cdot \frac{\phi(c)}{c}, \quad
\Delta(\overline{B}) = \frac{\phi(c)}{c} \cdot (ad -bc),$$
avec la convention $\frac{\phi(c)} c = 0$ si $c=0$.
Avec ces notations et en notant en plus $\nr'(\delta)$ le caractère non 
ramifié de $G_{F'}$ dans $k_E^\times$ qui envoie le Frobenius arithmétique 
sur un élément $\delta$ de $k_E^\times$, le corollaire III.1.4.8 de \cite{LB} s'énonce 
ainsi :

\begin{prop}
\label{prop:represid}
La représentation $\rhobar_\MK$ est absolument irréductible si et 
seulement si $c$ est non nul et les deux conditions suivantes sont 
réalisées :
\begin{enumerate}[(i)]
\item $(p^f + 1) (\val_v(T(\overline{B})) > p^f \val_v ( \Delta(\overline{B} ))$ ;
\item $\val_v ( \Delta(\overline{B} )) \not \equiv 0 \pmod {p^f + 1}$.
\end{enumerate}
Lorsque c'est le cas, notons $h$ et $\delta$ les éléments respectifs de 
$\Z$ et $k_E^\times$ définis par la congruence $\Delta(\overline{B}) 
\equiv  - \delta v^h \pmod{v^{h+1}}$ (\emph{i.e.} $- \delta v^h$ est le terme 
de plus bas degré de $\Delta(\overline{B})$). Alors, il existe une
base de $k_E \otimes_R \bbM_F^{(0)}$ dans laquelle la matrice de $\phi$ 
est :
$$\begin{pmatrix}
0 & \delta v^h \\
1 & 0
\end{pmatrix}$$
et la représentation $\rhobar_\MK$ est isomorphe à 
$\Ind^{G_F}_{G_{F'}} \big(\omega_{2f}^{-h} \cdot \nr'(\delta^{-1})\big) \otimes
\omega$ (rappelons que $\omega$ désigne le caractère cyclotomique).
\end{prop}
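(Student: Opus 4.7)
L'�nonc� �tant annonc� comme une traduction du corollaire III.1.4.8 de \cite{LB}, le plan est de r�duire explicitement la proposition � la classification des $\phi$-modules de rang $2$ sur $k_E((v))$ fournie par ce corollaire, en prenant soin de tracer les conventions de covariance/dualit� utilis�es pour passer de $\bbM^\star$ � $\bbM$.

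La premi�re �tape consiste � se ramener � un �nonc� purement sur le $\phi$-module $\bbM := k_E \otimes_R \bbM_F^{(0)}$ muni de la matrice $\bar B$. Si $c = 0$, le premier vecteur de base engendre une droite stable par $\phi$ et $\rhobar_\MK$ est alors r�ductible. Supposons donc $c \neq 0$ et consid�rons le changement de base dans lequel la nouvelle base est $(e_1, \phi(e_1))$ : un calcul direct donne $\phi(\phi(e_1)) = -\Delta(\bar B) \, e_1 + T(\bar B) \, \phi(e_1)$, ce qui explique le r�le des invariants $T(\bar B)$ et $\Delta(\bar B)$ (qui sont des polyn�mes canoniques de la trace et du d�terminant tordus par $\phi$). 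Dans cette nouvelle base, la matrice de $\phi$ devient
$$\begin{pmatrix} 0 & -\Delta(\bar B) \\ 1 & T(\bar B) \end{pmatrix}.$$

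La deuxi�me �tape est l'algorithme de mise sous forme normale. L'id�e est de remplacer it�rativement $\phi(e_1)$ par $\phi(e_1) + \lambda e_1$, ce qui, modulo r�solution de l'�quation semi-lin�aire $\phi(\lambda) \equiv -T(\bar B) \pmod{\text{correction}}$, annule le coefficient inf�rieur droit � un ordre croissant de $v$. La condition (i) $(p^f + 1) \val_v(T(\bar B)) > p^f \val_v(\Delta(\bar B))$ est exactement celle qui garantit la convergence de ce proc�d� dans $k_E((v))$ (par comparaison des pentes de Newton en $v$, le facteur $p^f$ au num�rateur provenant de $\phi$) et m�ne � la forme annonc�e avec $\Delta(\bar B) \equiv -\delta v^h \pmod{v^{h+1}}$. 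La condition (ii) de non-divisibilit� de $h$ par $p^f + 1$ est ensuite ce qui emp�che une r�duction ult�rieure en somme directe de deux droites $\phi$-stables, ce qui assure l'irr�ductibilit� absolue : toute d�composition suppl�mentaire ferait appara�tre une $f$-uplet de pentes enti�res, en contradiction avec l'ordre fractionnaire impos� par $h$ modulo $p^f + 1$.

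La derni�re �tape est l'identification de $\rhobar_\MK$. Une fois mis sous la forme $\begin{pmatrix} 0 & \delta v^h \\ 1 & 0 \end{pmatrix}$, le $\phi$-module $\bbM$, restreint en $\phi^2$, devient diagonal � valeurs propres $\delta v^h$ et $\phi(\delta v^h) = \delta v^{p^f h}$. C'est la forme canonique d'un $\phi^f$-module induit � partir du $\phi^{2f}$-module sur $k_E((v))$ de rang $1$ correspondant, via l'�quivalence de Fontaine et la th�orie du corps des normes, � la restriction � $G_{F'_\infty}$ du caract�re $\omega_{2f}^{-h} \cdot \nr'(\delta^{-1})$ de $G_{F'}$. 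Le point le plus technique sera alors de v�rifier le twist par $\omega$ : il appara�t comme cons�quence de la convention covariante $\bbM(V) = \bbM^\star(V^\star(1))$, qui introduit une torsion � la Tate une fois compar�e aux normalisations contravariantes utilis�es dans \cite{LB}. Cette v�rification, bien que standard, est la principale subtilit� pour faire correspondre exactement l'�nonc� du corollaire cit� avec la formulation souhait�e ici.
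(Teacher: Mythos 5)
The paper gives no proof of this proposition at all: it is presented as a direct transcription of Corollaire III.1.4.8 of \cite{LB}, so your announced plan --- reduce everything to that corollary while tracking the covariant convention $\bbM(V)=\bbM^\star(V^\star(1))$ --- is exactly the paper's route, and your preliminary reductions are sound: $c=0$ forces a $\phi$-stable line, and in the basis $(e_1,\phi(e_1))$ the matrix of $\phi$ is indeed $\left(\begin{smallmatrix}0&-\Delta(\overline{B})\\1&T(\overline{B})\end{smallmatrix}\right)$, which is why only the twisted trace and determinant matter.

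Two steps of your sketch, however, would not survive as written. First, the normalization: the semilinear equation $\phi(\lambda)\equiv -T(\overline{B})$ has in general no solution in $k_E((v))$, since the image of $\phi$ consists of series in $v^{p^f}$; the iteration that works modifies the cyclic vector, $e_1\mapsto e_1+x\,\phi(e_1)$, whose leading-order equation is $\Delta(\overline{B})\,x\approx-T(\overline{B})$, and condition (i) is then precisely the condition making the correction terms (of valuation at least $p^{2f}\val_v(x)+p^f\val_v(\Delta(\overline{B}))-\cdots$) strictly smaller, i.e.\ the contraction condition --- your phrasing identifies the right phenomenon but not the right equation. Second, the \emph{seulement si} direction is essentially absent: you must show that when (i) fails a $\phi$-stable line exists, by solving $x\phi(x)+T(\overline{B})x+\Delta(\overline{B})=0$ with the other Newton balance $\val_v(x)=\val_v(\Delta(\overline{B}))-\val_v(T(\overline{B}))\in\Z$, and that when (ii) fails (so $h=(p^f+1)m$) one can rescale a basis vector by $v^{-m}$ and split the module after a finite extension of $k_E$, contradicting absolute irreducibility. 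Finally, the exact identification --- the exponent $-h$, the $\delta^{-1}$, and the twist by $\omega$ coming from the covariant normalization --- is exactly the content of the cited corollary and is deferred rather than verified; since this sign-and-twist bookkeeping is the part most likely to go wrong, the proposal as it stands either must carry it out or, like the paper, simply quote \cite{LB} after matching conventions.
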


\begin{rem}
\label{rem:IIimpair}
Supposons que les paramètres $a_i$ et $a'_i$ correspondant à $\MK$ 
introduits dans la proposition \ref{propecriturejolie2} sont tous dans l'idéal maximal 
de $R$ et que le genre de $\MK$ fait intervenir un nombre pair de
composantes $\II$.
Alors, la matrice $\overline{B}$ est le produit de matrices diagonales 
et d'un nombre pair de matrices anti-diagonales. Par conséquent, elle est 
diagonale et la représentation résiduelle $\rhobar_\MK$ est réductible.
\end{rem}

\subsubsection{Calcul de $\Ext_{\Goo}^1(\rhobar,\rhobar)$}
\label{sssec:calculExt1}

Nous fixons dans ce paragraphe $\rhobar$ une~$k_E$-représentation 
irréductible de dimension~$2$ de~$G_F$.
Une telle représentation est de la forme $\Ind^{G_F}_{G_{F'}}\big( 
\omega_{2f}^{-h} \otimes \nr'(\delta^{-1}) \big) \otimes \omega$, pour un certain entier $h$ 
non nul modulo $p^f + 1$ et $\delta$ un élément de $k_E^\times$.

L'objectif de cette partie est de décrire l'espace~$\Ext_{\Goo}^1(\rhobar,\rhobar)$ des extensions de $\rhobar$ par elle-même dans la catégorie des représentations de $\Goo$.
D'après le lemme~\ref{lem:rinj} et la remarque qui le suit (avec $d = 2$ et $p \geq 5$),
cet espace contient (canoniquement) celui
des extensions de $\rhobar$ par elle-même comme représentation de $G_F$.

D'après le théorème \ref{thm:equivrepPhimod} (pour $R = k_E$),
la catégorie des $k_E$-représentations de~$\Goo$ de dimension finie
est équivalente à 
celle des~$\phi$-modules étales sur~$k_E (( v ))$.
Soit $D(\rhobar)$ le $\phi$-module étale sur~$k_E (( v ))$, de dimension $2$, associé à $\rhobar$.
Ainsi l'espace~$\Ext_{\Goo}^1(\rhobar,\rhobar)$ que nous voulons décrire est  isomorphe 
à l'espace $\Ext^1(D(\rhobar), D(\rhobar)) $  des extensions de $D(\rhobar)$ par lui-même dans la catégorie des $\phi$-modules étales sur $k_E((v))$.
Fixons une base $\cB$ de $D(\rhobar)$ et notons $B$ la matrice 
de $\phi$ dans cette base.
Pour toute matrice~$A$ dans $\Mat_2(k_E((v)))$, nous notons $D_A$ le $\phi$-module %
dont l'espace vectoriel sous-jacent est $D(\rhobar) \oplus D(\rhobar)$ et de Frobenius dans la base $\cB \oplus \cB$ donné par la matrice 
$
\begin{pmatrix}
B & A \\
0 & B 
\end{pmatrix}.
$
L'application
$$
\begin{array}{r c l}
\frac{\Mat_2(k_E((v)))}{\mathrm{Im}( X \mapsto B\phi(X) - XB ) } & \longrightarrow & \Ext^1(D(\rhobar), D(\rhobar)) \\
A & \longmapsto & \text{classe de } D_A
\end{array}
$$
est alors un isomorphisme de $k_E$-espaces vectoriels.

Il s'agit maintenant de déterminer l'image de l'application de $\Mat_2(k_E((v)))$ dans lui-même qui envoie une matrice $X$ sur  la matrice $B\phi(X) - XB$.
Nous pouvons supposer que la base $\cB$ a été choisie de façon à ce que
la matrice~$B$ du Frobenius $\phi$ dans $\cB$ soit de la forme
$$
B = 
\begin{pmatrix}
0 & \delta v^h \\
1 & 0 
\end{pmatrix},
$$
avec $\delta$ dans~$k_E^\times$, $0\leq h\leq q^2-2$ et $h\not\equiv 
0\pmod{q+1}$.

\begin{prop}\label{propdecompositionext1}
L'application
$$
\begin{array}{r c l}
\frac{\Mat_2(k_E((v)))}{\mathrm{Im}( X \mapsto B\phi(X) - XB ) } & \longrightarrow & \frac{k_E((v))}{\mathrm{Im}( v^{(q-1)h} \phi^2 - \Id )} \oplus \frac{k_E((v))}{\mathrm{Im}( \phi^2 - \Id )} \\
\begin{pmatrix}
a & b \\
c & d
\end{pmatrix}
& \longmapsto & \left( \frac{1 }{\delta v^{h}} \left(  \phi(a)  +d  \right) , \frac{1}{\delta v^{h}}    b +  \phi(c)  \right)
\end{array}
$$
est un isomorphisme de $k_E$-espaces vectoriels.
\end{prop}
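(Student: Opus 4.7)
L'approche repose sur une observation cl\'e : puisque $B$ est antidiagonale, l'application $T : X \mapsto B\phi(X) - XB$ \'echange le sous-espace $D$ des matrices diagonales et le sous-espace $A$ des matrices antidiagonales de $\Mat_2(k_E((v)))$. La premi\`ere \'etape consistera \`a utiliser la d\'ecomposition $\Mat_2(k_E((v))) = D \oplus A$ pour obtenir l'identification canonique
$$\frac{\Mat_2(k_E((v)))}{\mathrm{Im}(T)} \simeq \frac{D}{\mathrm{Im}(T_{|A})} \oplus \frac{A}{\mathrm{Im}(T_{|D})}.$$
Ceci ram\`ene l'\'enonc\'e au calcul explicite de chacun des deux quotients obtenus.

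Pour le premier quotient $D / \mathrm{Im}(T_{|A})$, je calculerai d'abord l'image $T\smat{0 & b \\ c & 0} = \smat{\delta v^h \phi(c) - b & 0 \\ 0 & \phi(b) - \delta v^h c}$, puis j'utiliserai le param\`etre $b$ pour annuler, modulo l'image, l'entr\'ee diagonale sup\'erieure gauche : en posant $b = \delta v^h \phi(c) - a$, la matrice $\smat{a & 0 \\ 0 & d}$ est cohomologue \`a $\smat{0 & 0 \\ 0 & d + \phi(a) - \delta v^h(v^{(q-1)h} \phi^2(c) - c)}$ pour tout choix de $c \in k_E((v))$, en utilisant de fa\c{c}on cruciale que $\phi$ est $k_E$-lin\'eaire, et donc $\phi(\delta) = \delta$. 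Une renormalisation par $\delta v^h$ donnera alors l'isomorphisme annonc\'e $(a, d) \mapsto \frac{\phi(a) + d}{\delta v^h}$ entre $D / \mathrm{Im}(T_{|A})$ et $k_E((v))/\mathrm{Im}(v^{(q-1)h}\phi^2 - \Id)$. Un raisonnement sym\'etrique, utilisant cette fois $T\smat{a & 0 \\ 0 & d}$ et le param\`etre $d$ pour annuler l'entr\'ee antidiagonale inf\'erieure gauche, conduira \`a l'identification de $A / \mathrm{Im}(T_{|D})$ avec $k_E((v))/\mathrm{Im}(\phi^2 - \Id)$ \emph{via} $(b, c) \mapsto \frac{b}{\delta v^h} + \phi(c)$.

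La principale subtilit\'e r\'eside dans le traitement de la constante $\delta \in k_E^\times$ lors de ces calculs, qui se r\'esout gr\^ace \`a la $k_E$-lin\'earit\'e du Frobenius $\phi$ (cons\'equence de la $R$-lin\'earit\'e des Frobenius des $\varphi^f$-modules pour $R = k_E$) : c'est pr\'ecis\'ement ce qui permet d'obtenir l'exposant $(q-1)h$ dans la premi\`ere composante, puisque $\phi(\delta v^h) = \delta v^{hq}$ conduit \`a un facteur $\delta v^h \cdot v^{(q-1)h}\phi^2$ qui se r\'eorganise comme annonc\'e. La v\'erification finale de la bijectivit\'e de l'application compos\'ee ne pr\'esente aucune difficult\'e et s'effectue directement \`a partir des expressions explicites d\'ej\`a obtenues, ce qui conclut.
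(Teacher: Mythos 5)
Votre démonstration est correcte et correspond à la vérification directe que les auteurs omettent : le papier énonce la proposition sans preuve (seul le lemme décrivant les deux quotients est démontré ensuite), et le calcul naturel est exactement celui que vous proposez. Vos formules sont exactes : $T = B\phi(\cdot) - (\cdot)B$ échange bien matrices diagonales et antidiagonales, l'élimination avec $b = \delta v^h\phi(c) - a$ (resp. $d = \phi(a) - c$) identifie $D/T(A)$ à $k_E((v))/\mathrm{Im}(v^{(q-1)h}\phi^2 - \Id)$ via $(\phi(a)+d)/(\delta v^h)$ (resp. $A/T(D)$ à $k_E((v))/\mathrm{Im}(\phi^2 - \Id)$ via $b/(\delta v^h) + \phi(c)$), et l'usage de la $k_E$-linéarité de $\phi$ (d'où $\phi(\delta v^h) = \delta v^{qh}$), qui fait apparaître la torsion $v^{(q-1)h}$, est précisément le point à contrôler.
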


\begin{lem}\label{lemdecompositionext1}\ 
\begin{enumerate}[(i)]
\item Le quotient $\frac{k_E((v))}{\mathrm{Im}( v^{(q-1)h} \phi^2 - \Id )}$ a pour base 
$$\textstyle
\big\{
v^n, \,\,
n < - \frac{h}{q+1} \text{ et } n \not\equiv (q-1)h \!\! \pmod {q^2}
\big\}
.$$
\item Le quotient $ \frac{k_E((v))}{\mathrm{Im}( \phi^2 - \Id )}$ a pour base
$$
\big\{
v^n, \,\,
n < 0 \text{ et } n \not\equiv 0 \!\! \pmod {q^2}
\big\}
\, \cup \,
\left\{1 
\right\}
.$$
\end{enumerate}
\end{lem}

\begin{proof}
Soit $m$ un entier relatif.
Notons  $f_m$ l'application $k_E$-linéaire de $k_E(( v))$ dans lui-même qui envoie $x$ sur $ v^m \phi(x) - x$.
Notons $M$ le rationnel $\frac{-m}{ q^2 - 1} $, point fixe de $z \mapsto q^2 z+ m $.
Nous traitons simultanément les deux cas du lemme en démontrant que le quotient $k_E ((v)) / \mathrm{Im} f_m   $ a pour base l'ensemble
$$
\cB_m =
\big\{
v^n, \,\,
n < M \text{ et } n \not\equiv m \!\! \pmod {q^2}
\big\} 
\, \cup \,
\big\{v^M, \,\, \text{ si } M \text{ est entier} \big\}
.
$$
Remarquons que dans le point (i), $M$ ne peut être un entier
à cause de l'hypothèse $h \not\equiv 0 \pmod{q+1}$.

Les égalités et inégalités 
\begin{equation}
\label{ineg}
\forall x \in k_E((v)) \quad 
\left\{
\begin{array}{r c l}
\val(x) > M & \Longrightarrow &  \val_v(f_m(x)) = \val_v(x) > M \\
\val(x) < M & \Longrightarrow & \val_v(f_m(x)) = m + q^2\val_v(x) < M
\end{array}
\right.
\end{equation}
assurent la continuité de $f_m$ et que l'image de $f_m$ est fermée.
Notons que, comme $v^M$ est dans le noyau de $f_m$, tout élément de l'image de $f_m$ est de valuation différente de $M$ et a un antécédent de valuation différente de $M$.

Nous affirmons que, pour tout entier $n$ strictement supérieur à $M$, 
$v^n$ est dans l'image de $f_m$. En effet, la suite $(n_i)_{i \in \N}$ 
définie par $n_0 = n$ et $n_{i+1} = q^2n_i + m$ tend vers $+ \infty$ et 
la série $- \sum_{i \geq 0} v^{n_i} $ ainsi définie a pour image $v^n$ 
par $f_m$. Ainsi, toute classe dans $k_E ((v)) / \mathrm{Im} f_m $ a un 
représentant qui est un polynôme de Laurent de degré inférieur ou égal à 
$M$.

De plus, pour tout entier $n$, l'élément $v^{m + q^2 n} - v^n$ est dans 
l'image de $f_m$. Soit $n$ un entier strictement inférieur à $M$ et 
congru à $m$ modulo $q^2$. Alors l'entier $n' = \frac{n - m}{q^2}$ est 
strictement compris entre $n$ et $M$ et les puissances $v^n$ et $v^{n'}$ 
coïncident dans $k_E ((v)) / \mathrm{Im} f_m $. Ceci implique que 
l'ensemble $\cB_m$ engendre $k_E ((v)) / \mathrm{Im} f_m $.

Pour démontrer enfin que la famille $\cB_m$ est libre, il nous suffit enfin de remarquer que tout élement de $ \mathrm{Im} f_m $
de valuation inférieure ou égale à $M$ est en fait de valuation strictement inférieure à $M$ et congrue à $m$ modulo $q^2$. 

\end{proof}

\begin{rem}
\label{rem:imageExt1}
Avec les notations de la démonstration précédente, nous pouvons 
déterminer pratiquement l'image d'une série $\sum a_i v^i $ dans $k_E 
((v)) / \mathrm{Im} f_m $ en procédant comme suit :
\begin{enumerate}[(1)]
\item nous tronquons les puissances strictement supérieures à $\frac{-m}{ q^2 - 1} $ ;
\item pour tout $n$ strictement inférieur à $\frac{-m}{ q^2 - 1} $, nous remplaçons $v^{m + q^2 n}$ par $v^n$, jusqu'à avoir fait disparaître toutes les puissances congrues à $m$ modulo $q^2$.
\end{enumerate}
\end{rem}

\subsubsection{Exposé sommaire de la stratégie}
\label{sssec:methode}

Soient $\rhobar$ une représentation irréductible de $G_F$ dans 
$\GL_2(k_E)$, $\ttt$ un type galoisien de la forme $\eta \oplus \eta'$ 
et $\vv_0$ le type de Hodge $((0,2))_{\tau \in \SSS} $ choisis comme au début de la partie \ref{sec:methode}.
Dans cette partie, nous présentons une méthode générale de détermination de l'anneau de déformations $ R^\psi(\vv_0, \ttt , \rhobar)$. Nous appliquons notamment cette méthode dans la partie \ref{sec:degre2} lorsque $F$ est de degré $2$ sur $\Qp$.

Rappelons que le caractère $\psi$ est fixé et satisfait une relation de compatibilité \eqref{relpsi} avec $\vv_0$ et $\ttt$.
Pour nos choix de $\vv_0$ et $\ttt$, $\psi$ est donc de la forme 
\begin{equation}
\label{eqn:psidelta}
\psi = \eta \eta' \cdot \nr(- \delta^{-1})
\end{equation}
où $\delta$ est un élément dans $\oE^\times$, fixé.

\paragraph{Première étape : les engeances}

Dans \cite{Ki-Moduli} puis dans \cite{Ki3}, Kisin démontre qu'il existe 
des variétés définies sur $\oE$ qui paramètrent les modules de 
Breuil--Kisin de hauteur $\leq h$ (pour un certain entier $h$ fixé) à 
coefficients dans des $\oE$-algèbres locales complètes et noetheriennes 
variables qui sont munis éventuellement de structures supplémentaires. 
Les fibres spéciales de ces variétés sont des variétés algébriques 
quasi-projectives réduites définies sur $k_E$ que Pappas et Rapoport 
appelent \emph{variétés de Kisin} dans \cite{PR}.

Étant donnés une représentation irréductible $\rhobar: G_F \rightarrow 
\GL_2(k_E)$ et un type galoisien $t = \eta \oplus \eta'$, notons 
$\GG\RR_{\rhobar, \psi,\vv_0, \ttt}$ la\footnote{Les variétés de Kisin 
apparaissant naturellement comme des sous-schémas localement fermés et 
réduits d'espaces projectifs, la donnée de leurs ensembles de points dans 
toutes les extensions finies de $k_E$ suffit à les déterminer 
entièrement.} variété de Kisin dont les $k$-points s'identifient, pour 
toute extension finie $k$ de $k_E$, à l'ensemble des modules de 
Breuil--Kisin sur $k \otimes_{\Zp} \SK$ de type $(\vv_0, \ttt, \psi)$ qui 
sont inclus dans $k \otimes_{k_E} \bbM(\rhobar_{|\GooL})$ muni de sa 
donnée de descente.

\begin{definit} 
Soient $\rhobar: G_F \rightarrow \GL_2(k_E)$ une représentation 
irréductible de $G_F$ et $t = \eta \oplus \eta'$ un type galoisien.
Si $E'$ est une extension finie de $E$, un $k_{E'}$-point de la variété 
de Kisin $\GG\RR_{\rhobar,\psi,\vv_0,\ttt}$ est appelé une 
\emph{$E'$-engeance} de type $t$ de $\rhobar$.
\end{definit}

\begin{rem}
Lorsque $E'$ est $E$ et que la situation ne prête pas à confusion, nous
dirons simplement \emph{engeance} à la place de $E$-engeance.
\end{rem}

De la proposition \ref{propecriturejolie} appliquée à $R = k_E$ et du 
fait que $\rhobar$ est absolument irréductible, nous déduisons que la 
donnée d'une engeance est équivalente à la donnée d'un genre $(g_0, 
\ldots, g_{f-1})$ et d'une famille de paramètres $(\alpha, \alpha', a_0, 
a'_0, \ldots, a_{f-1}, a'_{f-1})$ vivant dans $k_E$, satisfaisant aux 
conditions de la proposition \ref{propecriturejolie} et considérés modulo 
la relation d'équivalence décrite par la proposition 
\ref{prop:uniciteecriture}.

La première étape consiste à déterminer la liste des engeances de type 
$\ttt$ de la représentation $\rhobar$. Pour y parvenir, une possibilité 
consiste à parcourir tous les genres et paramètres possibles puis, en 
utilisant la proposition \ref{prop:represid}, à sélectionner ceux  donnant lieu à une représentation résiduelle isomorphe à $\rhobar$. 
Cette étape est entièrement algorithmique et nous l'avons implémentée en 
{\tt sage}. Nous renvoyons le lecteur à la page web
\begin{center}
\url{https://cethop.math.cnrs.fr:8443/home/pub/14}
\end{center}
pour une présentation rapide de notre logiciel {\tt sage} ainsi que des
exemples de calculs.

Remarquons toutefois qu'il devrait être également possible de déterminer 
la liste des engeances en revenant à la définition et en calculant la 
variété de Kisin $\GG\RR_{\rhobar,\psi,\vv_0,\ttt}$. Cette approche sera 
développée dans un travail ultérieur.

\paragraph{Deuxième étape : construction d'une famille de morphismes}
\label{par:etape2}

Soit $E'$ une extension finie de $E$.
Pour chaque $E'$-engeance $\xi' = ( (g_i)_i , (\alpha, \alpha') , (a_i, 
a'_i)_i )$ de type $\ttt$ de $\rhobar$, nous définissons une 
$\oEp$-algèbre locale complète noetherienne $R_{\xi',E'}$, de corps 
résiduel $k_{E'}$, ainsi qu'un module de Breuil--Kisin $\MK_{\xi',E'}$ 
sur $R_{\xi',E'}$ comme suit :

\begin{enumerate}[(a)]
\item Supposons que le genre de $\xi'$ comporte un nombre impair de 
facteurs $\II$. Nous posons :
$$
R_{\xi',E'} = \frac{\oEp [[X_i, Y_i, i \in \II , T_i, i \not\in \II ]]}{(X_iY_i + p, i \in \II)}.
$$
Par la remarque \ref{rem:constrmodBK}, nous obtenons un module de 
Breuil--Kisin $\MK_{\xi',E'}$ sur $R_{\xi',E'}$ en remplaçant dans les formules 
de la proposition \ref{propecriturejolie} :
\begin{itemize}
\item[$\bullet$] $\alpha$ par $1$ et $\alpha'$ par $\delta$ ;
\item[$\bullet$] pour les indices $i$ de genre $\I_\eta$, $a_i$ par $[a_i] + T_i$ ;
\item[$\bullet$] pour les indices $i$ de genre $\I_{\eta'}$, $a'_i$ par $[a'_i] + T_i$ ;
\item[$\bullet$] pour les indices $i$  de genre $\II$, $a_i$ par  $X_i$ et $a'_i$ par $Y_i$.
\end{itemize}
\item Supposons que le genre de $\xi'$ comporte un nombre pair de facteurs 
$\II$. Alors, d'après la remarque \ref{rem:IIimpair}, le genre de $\xi'$ 
possède aussi un facteur (de genre $\I_\eta$ ou $I_{\eta'}$) avec un paramètre non 
nul dans $k_E$. Fixons $i_0$ l'indice d'un tel facteur.
Nous posons :
$$
R_{\xi',E'} = \frac{\oEp [[X_i, Y_i, i \in \II , T_i, i \not\in \II \cup \{ i_0 \} , Z ]]}{(X_iY_i + p, i \in \II)}.
$$
Par la remarque \ref{rem:constrmodBK}, nous obtenons un module de 
Breuil--Kisin $\MK_{\xi',E'}$ sur $R_{\xi',E'}$ en remplaçant dans les formules
de la proposition \ref{propecriturejolie2}:
\begin{itemize}
\item[$\bullet$] $\alpha$ par $[\alpha] + Z$ et $\alpha'$ par $ - \delta( [\alpha] + Z )^{-1} $;
\item[$\bullet$] $a_{i_0}$ (si $i_0$ est de genre $\I_\eta$) ou  $a'_{i_0}$ (si $i_0$ est de genre $\Ietap$) par $1$ ;
\item[$\bullet$] pour les indices $i \neq i_0$ de genre $\I_\eta$, $a_i$ par $[a_i] + T_i$ ;
\item[$\bullet$] pour les indices $i \neq i_0$ de genre $\I_{\eta'}$, $a'_i$ par $[a'_i] + T_i$ ;
\item[$\bullet$]  pour les indices $i$  de genre $\II$, $a_i$ par  $X_i$ et $a'_i$ par $Y_i$.
\end{itemize}
\end{enumerate}
Dans les deux cas (a) et (b) ci-dessus, la $R_{\xi',E'}$-représentation de
$G_F$ associé à $\MK_{\xi',E'}$ est de déterminant $\psi\varepsilon$ et se réduit sur $\rhobar$ modulo l'idéal
maximal.
Nous obtenons donc un morphisme de $\oEp$-algèbres (locales) $f_{\xi',E'}$ 
de $\oEp \otimes_{\oE} R^\psi(\rhobar)$ dans $R_{\xi',E'}$, qui induit 
l'identité sur les corps résiduels $k_{E'}$.

\begin{lem}
\label{lem:factorf}
Pour tout $E'$-engeance $\xi'$, le morphisme $f_{\xi',E'}$ se factorise par 
$\oEp \otimes_{\oE} R^{\psi} (\vv_0, \ttt, \rhobar)$.
\end{lem}

\begin{proof}
Convenons de dire qu'un morphisme de $\oEp$-algèbres $\alpha : 
\oEp \otimes_{\oE} R^\psi(\rhobar) \to \Zpbar$ est de type $(\vv_0, \ttt, \psi)$ si la 
$\Qpbar$-représentation de $G_F$ qu'il définit est de type $(\vv_0, 
\ttt, \psi)$. La $\oEp$-algèbre $\oEp \otimes_{\oE} R^\psi(\vv_0 , \ttt, 
\rhobar)$ est alors égale au quotient de $\oEp \otimes_{\oE} 
R^\psi(\rhobar)$ par l'idéal $I = \cap \ker f$ où l'intersection est 
étendue à tous les morphismes $f$ de type $(\vv_0, \ttt, \psi)$ (voir la remarque \ref{rem:defdefo}).

Pour démontrer le lemme, il suffit de démontrer que tout élément $x$ dans $I$ 
s'envoie sur $0$ par $f_{\xi',E'}$. Considérons un élément $x$ dans $I$ et 
posons $y = f_{\xi',E'}(x)$. Soit également $g : R_{\xi',E'} \to \Qpbar$ un 
morphisme de $\oEp$-algèbres. La composée $g \circ f_{\xi',E'}$ est alors 
de type $(\vv_0, \ttt, \psi)$. Nous déduisons qu'elle s'annule sur $x$, ce qui 
revient à dire que $g(y) = 0$. Ainsi $y$ appartient à l'idéal $J = \cap 
\ker g$ où l'intersection est étendue à tous les morphismes $g$ comme 
ci-dessus. Or, à partir de la forme explicite de $R_{\xi', E'}$, nous 
déduisons aisément que l'idéal $J$ est nul. Ainsi nous avons bien démontré que $y = 
f_{\xi',E'}(x) = 0$ comme annoncé.
\end{proof}

Notons
$\tilde{f}_{\xi',E'} : \oEp \otimes_{\oE} R^{\psi} (\vv_0, \ttt, \rhobar)
\to R_{\xi',E'}$ l'application
correspondant à la factorisation du lemme précédent. Soit aussi $f_{E'}$ 
l'application produit $\prod_{\xi'} f_{\xi',E'}$ où le produit est étendu 
à toutes les $E'$-engeances $\xi'$ de type $\ttt$ de $ \rhobar$. Elle prend ses valeurs dans le
\og produit local \fg
$$R_{\expl,E'}= \prodloc {\xi'}{k_{E'}} R_{\xi', E'}$$
défini comme le sous-anneau du produit $\prod_{\xi'} R_{\xi', E'}$ formé
des familles $(x_{\xi'})$ dont toutes les composantes se réduisent sur le
même élément dans le corps résiduel $k_{E'}$. Par ce qui précède, le morphisme $f$ se factorise par un morphisme 
$$\tilde{f}_{E'} : \oEp \otimes_{\oE} R^{\psi} 
(\vv_0, \ttt, \rhobar) \longrightarrow R_{\expl, E'}$$
qui n'est autre que le produit des $\tilde f_{\xi',E'}$.

Les morphismes $\tilde f_{E'}$ que nous venons de construire vérifient en 
outre des conditions de compatibilité. Pour les énoncer, considérons 
$E''$ une extension finie de $E'$. L'ensemble des $E'$-engeances est 
alors naturellement inclus dans celui des $E''$-engeances. De plus, si 
$\xi'$ est une $E'$-engeance, il est clair d'après les définitions que
$\oEpp \otimes_{\oEp} R_{\xi',E'} \simeq R_{\xi', E''}$. Nous pouvons
ainsi définir un morphisme de $\oEpp$-algèbres :
$$\psi_{E'',E'} : R_{\expl, E''} \to \oEpp \otimes_{\oEp} 
R_{\expl,E'}$$
en envoyant une famille $(x_{\xi''})$ sur la restriction de cette
famille aux $E'$-engeances. Ces morphismes font commuter les diagrammes
suivants :
\begin{equation}
\label{eq:diagcompat}
\xymatrix @C=50pt @R=5pt {
& R_{\expl,E''} \ar[dd]^-{\psi_{E'',E'}} \\
\oEpp \otimes_{\oE} R^\psi(\vv, \ttt, \rhobar)
\ar[ru]^-{\tilde f_{E''}} \ar[rd]_-{\tilde f_{E'}} \\
& \oEpp \otimes_{\oEp} R_{\expl,E'}.
}
\end{equation}

\paragraph{Interlude : calcul de l'image d'un morphisme de source 
$R^\psi(\vv_0, \ttt, \rhobar)$}
\label{par:interlude}

Nous mettons de côté momentanément les algèbres $R_{\expl, E'}$ et nous 
intéressons au problème suivant (qui apparaîtra à plusieurs reprises dans 
la suite) : 
\begin{center}
\begin{minipage}{13.5cm}
Soient $R$ une $\oE$-algèbre et $M_R$ un $\phi$-module\footnotemark\ sur 
$R \hat\otimes_W \ocE$ tous deux explicites. Nous supposons que $M_R$ 
correspond à un morphisme $f : R^\psi(\vv_0, \ttt, \rhobar) \to R$.

Comment déterminer l'image de $f$ ?
\end{minipage}
\end{center}

\footnotetext{Nous rappelons que $\phi = \varphi^f$.} 

Dans la suite, nous supposerons pour simplifier que la représentation 
$\rhobar$ est telle qu'il existe une base de son $\phi$-module sur $k_E((v))$ dans laquelle 
l'action de $\phi$ est donnée par la matrice 
$$\begin{pmatrix} 0 & \delta v^h \\ 1 & 0 \end{pmatrix}.$$

\noindent
Une première étape vers la résolution de la question ci-dessus consiste 
à remarquer que, d'après le lemme \ref{lem:fsurj}, l'application $f$ est 
surjective si et seulement si l'application tangente 
$\Hom_{\oE-\mathrm{alg}} (R, k_E[\varepsilon]) \to \Ext_{G_F}^1(\rhobar, 
\rhobar)$ est injective. De plus, la condition $p \geq 5$ 
assure, par le lemme \ref{lem:rinj}, que l'application restriction de $ 
\Ext_{G_F}^1(\rhobar, \rhobar)$ dans $ \Ext_{\Goo}^1(\rhobar, \rhobar)$ 
est injective. Ainsi, $f$ est surjective si et seulement si l'application 
$$f^\sharp : \Hom_{\oE-\mathrm{alg}} (R, k_E[\varepsilon]) \to 
\Ext_{\Goo}^1 (\rhobar , \rhobar)$$
est injective.

Or le fait que $R$ et $M_R$ soient explicites permet de
décrire entièrement $f^\sharp$. Expliquons à présent comment procéder
concrètement pour y parvenir.
L'algèbre $R$ étant explicite, nous pouvons déterminer une base $(f_1, 
\ldots, f_n)$ du $k_E$-espace vectoriel $\Hom_{\oE-\mathrm{alg}}(R, k_E[\varepsilon])$. 
Pour chaque $f_i$, considérons le 
$\phi$-module $M_i$ sur $k_E[\varepsilon]((v))$ déduit de $M_R$ par 
extension des scalaires \emph{via} $f_i$. Étant donné que $f_i$ correspond 
à une déformation de $\rhobar$, il existe une base de $M_i$ dans laquelle
la matrice de $\phi$ prend la forme :
$$\begin{pmatrix} 0 & \delta v^h \\ 1 & 0 \end{pmatrix} + \varepsilon A_i,$$
avec $A_i$ dans $\Mat_2(k_E((v)))$.
Avec ces conventions, l'image de $f_i$ dans $\Ext_{\Goo}^1 (\rhobar, 
\rhobar)$ s'identifie à celle de $A_i$ par l'application de la proposition 
\ref{propdecompositionext1}, que nous pouvons calculer de façon entièrement
explicite à l'aide du lemme \ref{lemdecompositionext1} et de la remarque \ref{rem:imageExt1}.

Maintenant que nous avons déterminé l'application $f^\sharp$, nous pouvons
vérifier si, oui ou non, elle est injective. Si elle l'est, le morphisme
$f$ est surjectif et son image est déterminée : c'est $R$ tout entier.
Si, au contraire, elle ne l'est pas, notons $N$ son noyau. Pour tout 
$\alpha$ dans $N$, le morphisme composé $\alpha \circ f$ prend alors ses
valeurs dans $k_E \subset k_E[\varepsilon]$. Ceci signifie que l'image
de $f$ est incluse dans la sous-$\oE$-algèbre de $R$ :
$$R_1 = \bigcap_{\alpha \in N} \alpha^{-1}(k_E).$$
En outre, une lecture attentive de la démonstration du lemme \ref{lemdecompositionext1}
fournit une nouvelle base de $M_R$ dans laquelle la matrice de $\phi$
est à coefficients dans $R_1$.

Nous nous ramenons, de cette manière, au même problème que précédemment où 
$R$ est remplacée par $R_1$. En appliquant à nouveau la même méthode, nous 
pouvons déterminer si l'application $f : R^\psi(\vv_0, \ttt, \rhobar) \to 
R_1$ est surjective. Si elle l'est, nous avons déterminé son image tandis 
que si elle ne l'est pas, nous pouvons exhiber une sous-algèbre explicite 
$R_2$ de $R_1$ dans laquelle $f$ prend ses valeurs et poursuivre ainsi
notre recherche...

Malheureusement, le processus que nous venons de décrire peut ne pas 
prendre fin après un nombre fini d'étapes ; nous entendons par là qu'il 
est possible qu'aucun des $R_n$ (pour $n$  dans $\N$) ne soit l'image de $f$. 
Usuellement, ce problème peut être résolu en définissant $R_\omega$ comme 
l'intersection de tous les $R_n$ puis en continuant la construction sur 
les ordinaux. Toutefois, nous ne pouvons alors plus vraiment parler de \og 
construction explicite \fg. En pratique, nous retiendrons donc que la 
méthode présentée ci-dessus est efficace si $f$ est \og presque \fg\ 
surjective mais qu'elle ne l'est plus dès lors que l'image de $f$ est très 
petite dans $R$.

\paragraph{Troisième étape : recollement des $R_{\expl,E'}$} \label{par:recollement}

Nous revenons à présent à la situation exposée à la fin de la seconde 
étape (\S \ref{par:etape2}). Rappelons que nous avons construit des 
algèbres $R_{\expl,E'}$ (où $E'$ désigne une extension finie variable de 
$E$) munies de morphismes ${\tilde f_{E'}} : \oEp \otimes_{\oE} R^\psi(\vv_0, \ttt, 
\rhobar) \to R_{\expl,E'}$ vérifiant des conditions de compatibilité \eqref{eq:diagcompat}. 
Le 
but de cette troisième et dernière étape --- qui est, de loin, la plus 
délicate --- est de mettre ensemble toutes les $R_{\expl,E'}$ afin de 
déterminer $R^\psi(\vv_0, \ttt, \rhobar)$.

Pour ce faire, notre méthode consiste à exhiber une $\oE$-algèbre locale 
noetherienne complète $R_\expl$ munie d'un morphisme de $\oE$-algèbres $\tilde g : 
R^\psi(\vv, \ttt, \rhobar) \to R_\expl$ et, pour toute extension $E'$ de 
$E$, d'un morphisme de $\oEp$-algèbres $\oEp \otimes_{\oE} R_\expl \to 
R_{\expl, E'}$ de sorte que les diagrammes suivants commutent :
$$\xymatrix @C=50pt @R=5pt {
& & R_{\expl,E''} \ar[dd]^-{\psi_{E'',E'}} \\
\oEpp \otimes_{\oE} R^\psi(\vv, \ttt, \rhobar) 
\ar[r]^-{\text{id} \otimes \tilde g} 
\ar@/^12pt/[rru]^-{\tilde f_{E''}} \ar@/_10pt/[rrd]_-{\tilde f_{E'}} 
& \oEpp \otimes_{\oE} R_\expl \ar[ru] \ar[rd] \\
& & \oEpp \otimes_{\oEp} R_{\expl,E'}
}$$
pour $E'$ et $E''$ deux extensions finies de $E$ avec $E' \subset E''$.

\begin{rem}
Il existe, à vrai dire, une formule qui fournit une algèbre $R_\expl$ 
convenable, à savoir :
\begin{equation}
\label{eq:Rexpl}
R_\expl = \varprojlim_{E'} \: {\tilde f_{E'}}(R^\psi(\vv_0, \ttt, \rhobar))
\end{equation}
où la limite projective est étendue à toutes les extensions finies 
$E'$ de $E$ et où les morphismes de transition sont induits par les
$\psi_{E'',E'}$.
Toutefois, il est généralement difficile d'appliquer la méthode du \S 
\ref{par:interlude} pour déterminer ${\tilde f_{E'}}(R^\psi(\vv_0, \ttt, 
\rhobar))$ car le morphisme ${\tilde f_{E'}}$ est souvent loin d'être surjectif. 
En conséquence, la formule \eqref{eq:Rexpl} est rarement exploitable.
\end{rem}

\begin{rem}
\label{rem:dimnulle}
Il existe toutefois un cas où déterminer un anneau $R_\expl$ convenable 
est simple ; c'est celui où la variété de Kisin 
$\GG\RR_{\rhobar,\psi,\vv_0,\ttt}$ est de dimension nulle. En effet, 
sous cette hypothèse additionnelle, les morphismes $\psi_{E'',E'}$ sont 
des isomorphismes dès que $E'$ et $E''$ sont assez grands. Nous en 
déduisons que, quitte à agrandir $E$, nous pouvons choisir dans ce cas 
$R_\expl = R_{\expl,E}$.
\end{rem}

\begin{lem}
\label{lem:tildeginj}
Avec les notations précédentes, le morphisme
$\tilde g : R^\psi(\vv_0, \ttt, \rhobar) \to R_\expl$ est injectif.
\end{lem}

\begin{proof}
Il suit de la définition de $R^\psi(\vv_0, \ttt, \rhobar)$ que, pour élément 
$x$ non nul dans  $R^\psi(\vv_0, \ttt, \rhobar)$, il existe un morphisme de 
$\oE$-algèbres $\alpha : R^\psi(\vv_0, \ttt, \rhobar) \to \Zpbar$ tel que 
$\alpha(x)$ est non nul. Pour démontrer que $\tilde g$ est injectif, il suffit 
donc de montrer que tout morphisme $\alpha$ comme ci-dessus se factorise 
par $\tilde g$.

Soit donc $\alpha : R^\psi(\vv_0, \ttt, \rhobar) \to \Zpbar$ un morphisme 
de $\oE$-algèbres. Remarquons, d'une part, qu'il prend ses valeurs dans 
l'anneau des entiers $\oEp$ d'une extension finie $E'$ de $E$ et, d'autre 
part, que, par définition, la $E'$-représentation de $G_F$ qu'il définit, 
est de type $(\vv_0, \ttt, \psi)$. D'après la proposition \ref{propstroumphe}, le théorème de classification des
modules de Kisin (\emph{cf} Proposition \ref{propecriturejolie}) et la 
construction de $R_{\expl, E'}$, il existe un morphisme de $\oEp$-algèbres $\beta : R_{\expl, 
E'} \to \oEp$ qui fait commuter le diagramme suivant :
$$\xymatrix @C=35pt {
\oEp \otimes_{\oE} R^\psi(\vv_0, \ttt, \rhobar) 
\ar[r]^-{\Id \otimes \tilde g} \ar[d]_-{\alpha}
& \oEp \otimes_{\oE} R_\expl \ar[r]
& R_{\expl,E'} \ar[lld]^-{\beta} \\
\oEp
}$$
Il en résulte directement que $\alpha$ se factorise par $\tilde g$ comme
nous le souhaitions.
\end{proof}

D'après le lemme \ref{lem:tildeginj} ci-dessus, l'anneau de déformations 
$R^\psi(\vv_0, \ttt, \rhobar)$ que nous cherchons à déterminer s'identifie 
à l'image du morphisme $\tilde g$. Or, au moins si nous avons été adroits
dans le choix de $R_\expl$ afin que $\tilde g$ soit \og presque \fg\ 
surjectif, nous pouvons déterminer l'image de $\tilde g$ à l'aide de la 
méthode expliquée au \S \ref{par:interlude}.

\section{Mise en \oe uvre de la méthode en degré $2$}
\label{sec:degre2}

Nous  mettons en \oe uvre dans cette partie la méthode du paragraphe \S \ref{ssec:calculdeform} dans le cas où $\rhobar$ est une représentation irréductible de $F = \Q_{p^2}$.
De telles représentations $\rhobar$ sont de la forme :
$$\rhobar \simeq \Ind^{G_{F}}_{G_{F'}}\left(
\omega_4^{1+r_0+p(1+r_1)} \cdot \nr' (\theta) \right)\otimes\omega_2^s$$
où $G_{F'}$ est le groupe de Galois absolu de l'extension non ramifiée $F'$ de degré $2$  de $F$, $\omega_4$ (resp. $\omega_2$) est le caractère fondamental de niveau $4$ (resp. $2$) associ\'e au plongement  fix\'e $\tau'_0$ de $F'$ dans $E$ (resp. $\tau_0 = \tau'_{0| F}$ de $F$ dans $E$), $s$ est dans $\Z$, $r_0$ est un entier entre $0$ et $p-1$ et  $r_1$ est un entier entre $-1$ et $p-2$. 

Les cas génériques, \emph{i.e.} pour lesquels $1\leq r_0\leq p-2$ et $0\leq r_1\leq p-3$,
 ont déjà été traités dans \cite{BM2}, Théorème 5.2.1.
Les auteurs obtiennent une description explicite des anneaux $R^\psi(\vv_0,\ttt,\rhobar)$, pour les types galoisiens $\ttt$ comme dans la partie \ref{sec:methode} :
$$
\begin{array}{rcll}
R^\psi(\vv_0,\ttt,\rhobar) & \simeq & \{0\} & \text{si } \D(\ttt) \cap \D(\rhobar) = \emptyset, \smallskip \\
 & \simeq & \oE[[ X , Y , T ]] /  (XY +p) & \text{sinon. } \\
\end{array}
$$

Nous traitons donc dans cette partie les cas restants.
Nous renvoyons à la partie \ref{sssec:poidsrhobar} pour la forme des représentations non génériques et la liste de leurs poids de Serre.

Dans la suite du texte, nous traitons le cas des repr\'esentations pour lesquelles l'entier $s$ est nul ;  le cas g\'en\'eral s'en d\'eduit par torsion par un caract\`ere de $G_F$.
Rappelons également que nous avons fixé le caractère $\psi$ d'une manière compatible avec le type galoisien $\ttt$ (relations  \eqref{relpsi} et \eqref{eqn:psidelta}) en choisissant un élément $\delta$ dans $\oE^\times$.
La compatibilité entre $\psi$ et le déterminant de $\rhobar$ impose de plus que $\delta$ se réduise sur $\theta^{-1}$ dans $k_E$.

Dans toute la fin du texte, $\ttt$ désigne un type galoisien choisi comme au début de la partie~\ref{sec:methode} : $\ttt = (\eta \oplus \eta')_{| I_F}$, où $\eta$ et $\eta'$ sont deux caractères distincts de $\Gal (L / F)$ dans $\oE^\times$.

Dans \cite{GK} (théorème A), Gee et Kisin démontrent l'existence de multiplicités intrinsèques satisfaisant la version cristalline de la conjecture de Breuil--Mézard pour le type $\vv_0$ et tout type galoisien $\ttt$.
Nos calculs permettent de déterminer presque toutes les valeurs de ces multiplicités intrinsèques, confirmant ainsi une conjecture de Kisin (\cite{Ki3}, conjecture 2.3.2).

\subsection{Liste des engeances de type $\ttt$ de $\rhobar$}

Un type galoisien $\ttt$ \'etant fix\'e,
la première étape  de  la stratégie expliquée au \S \ref{sssec:methode} pour le calcul de l'anneau $R^{\psi}(\vv_0, \ttt , \rhobar)$
consiste à établir la liste des engeances de type $\ttt$ de la 
représentation $\rhobar$.

Pour cela, nous listons les genres un par un et, 
pour chacun d'eux, en utilisant la méthode du \S \ref{sssec:represid}, nous 
calculons en fonction des paramètres $(\alpha, \alpha ', (a_i , a'_i)_{i = 0,\ldots, f-1}) $ la représentation résiduelle 
associée. Nous ne conservons enfin que les engeances pour lesquelles la 
représentation résiduelle correspondante est irréductible et isomorphe à $\rhobar$.

Il convient de remarquer que, pour $f$ égal à $2$, les mauvais genres exclus par la définition \ref{def:mauvaisgenre} donnent toujours lieu à des représentations résiduelles réductibles.
Cela se vérifie en appliquant  les techniques de calcul de la représentation résiduelle de la partie \ref{sssec:represid}.
Pour les bons genres, le calcul est facile et ne présente pas d'intérêt majeur. Afin de ne pas
allonger démesurément la taille de l'article, nous ne le présentons pas ici
mais regroupons simplement les résultats que nous avons obtenus dans le
tableau de la figure \ref{fig:engeances} (page \pageref{fig:engeances}) :
pour chaque représentation $\rhobar$, nous indiquons les types galoisiens $\ttt$ pour lesquels la variété de Kisin $\GG\RR_{\rhobar, \psi, \vv_0, \ttt}$ est non vide et les engeances qui la décrivent
(les paramètres $a_i$, $a'_i$, $\alpha$ et $\alpha'$ vivent
dans $k_E$).
Nous renvoyons également au \S 3.2 de \cite{Da}
pour la présentation d'un calcul analogue.

\newlength\epaisLigne
\newcommand\gline{\noalign{\global\epaisLigne\arrayrulewidth\global\arrayrulewidth 5pt}
\hline \noalign{\global\arrayrulewidth\epaisLigne}}

\newcommand\mline{\noalign{\global\epaisLigne\arrayrulewidth\global\arrayrulewidth 2.5pt}
\hline \noalign{\global\arrayrulewidth\epaisLigne}}

\newcommand\sline{\noalign{\global\epaisLigne\arrayrulewidth\global\arrayrulewidth 0.1pt}
\hline \noalign{\global\arrayrulewidth\epaisLigne}}

\renewcommand{\arraystretch}{1.5}

\begin{figure}
\begin{center}
\scriptsize
\begin{tabular}{| c |  c | c | c | c | c | c | c | }
\mline
\multicolumn{2}{ |c |}{\multirow{2}{*}{Représentation $\rhobar = \Ind_{G_{F'}}^{G_F} ( \chi \cdot \nr'(\theta))$}} &\multicolumn{3}{c|}{Type $\ttt$}  & \multicolumn{3}{c|}{Engeances }\\
\cline{3-8}
\multicolumn{2}{ |c |}{} & $\eta$ & $\eta'$ & $d$ & Genre & $(\alpha , \alpha')$ & $(a_0 , a_1) $\\
\mline
\multirow{5}{*}{$\chi = \omega_4^{ 1 + r_0}$} 
& $0 \leq r_0 \leq p - 2 $ & $ \omega_2^{r_0} $ & $ \omega_2^{-p}$  & $( p - 2 , p - 1 - r_0)$ & $\I_\eta \times \II  $ &  $(1,\theta^{-1})$ & $(0,0)$ \\
\cline{2-8}
& $1 \leq r_0 \leq p-2$ & $ \omega_2^{r_0 - p} $ & $ \mathds{1}$ & $(  0 , p - r_0)$ & $ \II \times \I_{\eta'} $ & $(1,\theta^{-1})$ &  $(0,0)$ \\
\cline{2-8}
& \multirow{3}{*}{\color{bleu} $0 \leq r_0 \leq p-3$} 
& \multirow{3}{*}{\color{bleu} $ \omega_2^{1 + r_0 - p} $} 
& \multirow{3}{*}{\color{bleu} $ \omega_2^{-1}$} 
& \multirow{3}{*}{\color{bleu} $(0 , p-2 - r_0)$} 
  & {\color{bleu} $\I_{\eta'} \times \II $} 
  & {\color{bleu} $(1 , \theta^{-1})$}
  & {\color{bleu} $(0, 0)$} \\
\cline{6-8}
& & & & 
  & {\color{bleu} $\II \times \I_{\eta}$ }
  & {\color{bleu} $(1 , \theta^{-1})$}
  & {\color{bleu} $(0, 0)$ }\\
\cline{6-8}
&  & & & 
  & {\color{bleu} $\I_{\eta'} \times \I_{\eta}$}
  & {\color{bleu} $(\alpha , \frac {-1} {\theta \alpha} )$} %
  & {\color{bleu} $( 1 , \theta \alpha^2)$} \\
\mline
\multirow{5}{*}{$\chi = \omega_4^{ p(2 + r_1)}$} & $- 1 \leq r_1 \leq p-3 $ & $ \omega_2^{p(1 + r_1)} $ &$ \omega_2^{-1}$ & $( p-2-r_1 , p-2)$  &  $\II  \times \I_\eta $ & $(1,\theta^{-1})$ & $(0,0)$   \\
\cline{2-8}
& $0 \leq r_1 \leq p-3$ & $ \omega_2^{ - 1 + p (1 + r_1) } $ & $\mathds{1}$ & $( p - 1 - r_1 , 0 )$ &  $\I_{\eta'} \times \II  $ & $(1,\theta^{-1})$  &  $(0,0)$  \\
 \cline{2-8}
& \multirow{3}{*}{\color{bleu} $- 1 \leq r_1 \leq p-4 $} 
& \multirow{3}{*}{\color{bleu} $\omega_2^{- 1 + p(r_1 + 2) }$} 
& \multirow{3}{*}{\color{bleu} $\omega_2^{-p}$} 
& \multirow{3}{*}{\color{bleu} $(p-3 - r_1 , 0)$} 
  & {\color{bleu} $\I_{\eta} \times \II$}
  & {\color{bleu} $(1,\theta^{-1})$}
  & {\color{bleu} $(0,0)$}  \\
\cline{6-8}
& & & & 
  & {\color{bleu} $\II  \times \I_{\eta'}$}
  & {\color{bleu} $(1,\theta^{-1})$}
  & {\color{bleu} $(0,0)$} \\
\cline{6-8}
& & & & 
  & {\color{bleu} $\I_{\eta} \times \I_{\eta'}$}
  & {\color{bleu} $(\alpha , \frac {-1} {\theta \alpha} )$} %
  & {\color{bleu} $(1 , -1)$} \\
\mline
\end{tabular}
\end{center}
\caption{Liste des engeances de type $\ttt$ de la représentation $\rhobar$}
\label{fig:engeances}
\end{figure}

\renewcommand{\arraystretch}{1}

Nous remarquons que, pour chaque couple $(\ttt, \rhobar)$ correspondant aux 
lignes du tableau qui apparaissent en noir, il n'y a qu'une seule engeance. 
La variété de Kisin correspondante est donc réduite à un point. Au 
contraire, dans les autres situations, celles qui apparaissent en bleu dans 
le tableau, nous sommes en présence de trois possibilités pour le genre, 
les deux premiers conduisant à une unique engeance et la troisième 
conduisant à une famille d'engeances paramétrée par $k_E^\times$. Il se 
trouve que, dans ces cas, la variété de Kisin est isomorphe à $\P^1_{k_E}$ 
: les engeances paramétrées par le troisième genre correspondent 
naturellement à l'ouvert Zariski $k_E^\times$ dans  $\P^1(k_E)$ tandis que 
les deux autres genres correspondent respectivement aux points $0$ et 
$\infty$ de $\P^1(k_E)$.

\subsection{Détermination de l'anneau de déformations}\label{ssec:recollement}

Nous continuons d'appliquer la méthode du \S \ref{sssec:methode}, sachant 
que nous arrivons à la deuxième étape. Dans la suite, nous traitons 
uniquement le cas où 
$\rhobar= \Ind_{G_{F'}}^{G_F}\! \big(\omega_4^{ 1 + 
r_0} \cdot \nr' (\theta )\big)$, avec $r_0$ entre $0$ et $p-2$, l'autre étant absolument identique.

\subsubsection{Cas d'une variété de Kisin de dimension nulle}

Supposons que la variété de Kisin $\GG\RR_{\rhobar,\psi,\vv_0,\ttt}$ soit 
réduite à un point (c'est-à-dire que nous sommes dans une ligne noire du 
tableau de la figure \ref{fig:engeances}). Notons $\xi$ l'unique engeance 
correspondante et appelons $R_\xi$ l'anneau qui lui est associé par la
construction du \S \ref{par:etape2} : nous avons
$$R_\xi \simeq \oE[[T,X,Y]]/(X Y + p).$$
La troisième étape de notre stratégie consiste à exhiber une $\oE$-algèbre
$R_\expl$ vérifiant un certain nombre de propriétés (\S \ref{par:recollement}). Toutefois, nous sommes
ici dans le cas simple où la variété de Kisin est de dimension nulle.
D'après la remarque \ref{rem:dimnulle}, nous pouvons donc simplement 
choisir $R_\expl = R_\xi$. Le lemme \ref{lem:tildeginj} entraîne alors
que nous avons une injection
$\tilde{g} : R^\psi(\vv_0,\ttt,\rhobar)\to R_\xi$.

\begin{prop} 
\label{prop:uniengeance}
Nous conservons les notations et les hypothèses précédentes et supposons de plus que la repr\'esentation $\rhobar$ n'est pas totalement non générique.
Alors
le morphisme $\tilde g$ est un isomorphisme.
\end{prop}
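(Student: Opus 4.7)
L'injectivit� de $\tilde g$ est d�j� acquise par le lemme~\ref{lem:tildeginj}. Il s'agit donc d'�tablir sa surjectivit�. D'apr�s le lemme~\ref{lem:fsurj}, ceci revient � v\'erifier l'injectivit\'e de l'application tangente
$\tilde g^* : \Hom_{\oE\text{-alg}}(R_\xi,k_E[\epsilon]) \to \Ext^1_{G_F}(\rhobar,\rhobar).$
Comme $p\geq 5$ et que $\rhobar$ est absolument irr\'eductible de dimension~$2$, le lemme~\ref{lem:rinj} assure que la restriction
$r : \Ext^1_{G_F}(\rhobar,\rhobar)\to \Ext^1_{\Goo}(\rhobar,\rhobar)$
est injective (l'hypoth\`ese $\rhobar\not\simeq\rhobar(1)$ se lit sur les caract\`eres de Serre apparaissant dans l'induction, puisque $\rhobar$ n'est pas totalement non g\'en\'erique). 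Il suffit donc de d\'emontrer que
$\tilde g^\sharp = r \circ \tilde g^* : \Hom_{\oE\text{-alg}}(R_\xi,k_E[\epsilon]) \to \Ext^1_{\Goo}(\rhobar,\rhobar)$
est injective.

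Le premier pas consiste \`a expliciter l'espace tangent de $R_\xi=\oE[[T,X,Y]]/(XY+p)$. Comme la relation $XY+p$ est dans $\m^2$ modulo $\pE$, cet espace est un $k_E$-espace vectoriel de dimension $3$, admettant pour base les trois morphismes $\alpha_T,\alpha_X,\alpha_Y$ envoyant respectivement $(T,X,Y)$ sur $(\epsilon,0,0)$, $(0,\epsilon,0)$ et $(0,0,\epsilon)$. Le second pas consiste, pour chacun de ces trois vecteurs tangents, \`a d\'eterminer explicitement la classe d'extension associ\'ee dans $\Ext^1_{\Goo}(\rhobar,\rhobar)$ en appliquant la recette du paragraphe~\ref{par:interlude}~: on sp\'ecialise le module de Breuil--Kisin universel $\MK_{\xi,E}$ \`a travers $\alpha_\bullet$ pour obtenir un $\varphi$-module sur $k_E[\epsilon]\otimes \ocEF$, on calcule les matrices de Frobenius $B^{(0)},B^{(1)}$ comme dans la proposition~\ref{prop:matrices}, on en d\'eduit via la formule~\eqref{eq:prodB} le matrice $B+\epsilon A_\bullet$ de $\phi$ sur la composante de degr\'e $0$, puis on applique la proposition~\ref{propdecompositionext1} et le lemme~\ref{lemdecompositionext1} pour obtenir l'image de $A_\bullet$ dans la d\'ecomposition
$\textstyle \frac{k_E((v))}{\Ima(v^{(q-1)h}\phi^2-\Id)}\oplus \frac{k_E((v))}{\Ima(\phi^2-\Id)}.$
L'\'etape concr\`ete passe par une mise en forme \og triangulaire \fg\ $\begin{pmatrix}0&\delta v^h\\1&0\end{pmatrix}$ de la matrice r\'esiduelle (fournie par la proposition~\ref{prop:represid}), puis par la proc\'edure de troncature de la remarque~\ref{rem:imageExt1}.

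Le c\oe{}ur de la preuve, et la principale difficult\'e, est de constater que les trois classes $[A_T],[A_X],[A_Y]$ ainsi obtenues sont lin\'eairement ind\'ependantes. La d\'eformation $\alpha_T$ modifie uniquement un coefficient diagonal d'un bloc $\I_\eta$ ou $\I_{\eta'}$ et produit une perturbation concentr\'ee, au plus bas degr\'e, \`a une puissance $v^{n_T}$ bien pr\'ecise ; les d\'eformations $\alpha_X$ et $\alpha_Y$, attach\'ees aux coordonn\'ees du bloc $\II$, perturbent les coefficients anti-diagonaux et produisent des puissances $v^{n_X},v^{n_Y}$ distinctes qui n'appartiennent ni \`a l'image de $v^{(q-1)h}\phi^2-\Id$ ni \`a celle de $\phi^2-\Id$ (hypoth\`ese qui s'examine \`a la main en d\'eterminant la valuation $h$ correspondant \`a chaque ligne du tableau de la figure~\ref{fig:engeances}). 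Le point crucial est ici que ces trois valuations appartiennent au syst\`eme libre d\'ecrit par le lemme~\ref{lemdecompositionext1}~: c'est exactement l'hypoth\`ese \og non totalement non g\'en\'erique \fg\ qui emp\^eche ces valuations de subir une collision ou d'\^etre renvoy\'ees sur l'\'el\'ement nul par la proc\'edure de la remarque~\ref{rem:imageExt1}. Une fois cette ind\'ependance lin\'eaire \'etablie cas par cas selon les deux premi\`eres lignes noires du tableau, l'injectivit\'e de $\tilde g^\sharp$ est acquise, et la proposition en d\'ecoule.
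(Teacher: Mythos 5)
Votre plan suit exactement la m\^eme strat\'egie que la d\'emonstration du texte : r\'eduction \`a l'injectivit\'e de $\tilde g^\sharp$ via les lemmes \ref{lem:tildeginj}, \ref{lem:fsurj} et \ref{lem:rinj}, calcul des images des vecteurs tangents $T^\star, X^\star, Y^\star$ dans $\Ext^1_{\Goo}(\rhobar,\rhobar)$ par la recette du \S\ref{par:interlude}, puis v\'erification de leur ind\'ependance lin\'eaire au moyen du lemme~\ref{lemdecompositionext1}. Vous avez aussi correctement localis\'e le r\^ole de l'hypoth\`ese \og non totalement non g\'en\'erique \fg, qui intervient pr\'ecis\'ement pour \'eviter la collision de deux valuations (cf.\ remarque~\ref{rem:totnongen}).

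Deux r\'eserves n\'eanmoins. D'abord, il s'agit d'un plan : les matrices explicites $B$, le changement de base conduisant \`a la forme $\bigl(\begin{smallmatrix}0&\delta v^h\\ 1&0\end{smallmatrix}\bigr)$, la valeur de $h$ et les trois valuations effectives ne sont pas calcul\'es, alors que c'est justement l\`a que g\^it la v\'erification d\'ecisive (et l\`a que le cas totalement non g\'en\'erique \'echoue). Ensuite, votre parenth\`ese attribuant \`a l'hypoth\`ese de non-d\'eg\'en\'erescence le fait que $\rhobar\not\simeq\rhobar(1)$ est inexacte : d'apr\`es la seconde assertion du lemme~\ref{lem:rinj}, un isomorphisme $\rhobar\simeq\rhobar(1)$ forcerait $p-1\mid d e_K = 2$, ce qui est exclu d\`es que $p\geq 5$ ; l'injectivit\'e de $r$ vaut donc pour \emph{toute} repr\'esentation irr\'eductible de dimension~$2$, ind\'ependamment de la g\'en\'ericit\'e. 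L'hypoth\`ese suppl\'ementaire n'intervient que plus bas, comme vous le dites ensuite correctement.
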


\begin{proof}
Nous traitons d'abord le cas du type galoisien $\omega_2^{r_0} \oplus 
\omega_2^{-p} $, correspondant à $d = (p-2, p-1 - r_0) $, avec $r_0$ 
entre $1$ et $p-2$ (l'hypothèse sur $\rhobar$ revient à exclure le cas $r_0 = 0$).
La matrice du $\phi$-module $\bbM_\xi$ sur $ R_\xi   \otimes_W \ocE$ 
associé est, d'après la partie \ref{sssec:represid} :
$$B = 
\begin{pmatrix}
 v^{p(p-1)} \left(X_1 v^{d_1} (v^p + p) + T_0 v^p \right) &  v^{p^2 + p} \\
\alpha' v^{p(p-1)} \left( v^{d_1} (v^p + p) + T_0Y_1v^{p-1}\right) & \alpha' Y_1 v^{p^2 + p -1} \\
\end{pmatrix}.
$$
Après un changement de base correspondant à multiplier le deuxième 
vecteur de base par $\alpha' v^{d_1 + p^2}$, nous obtenons la nouvelle 
matrice
$$
B' = 
\begin{pmatrix}
 v^{p(p-1)} \left(X_1 v^{d_1} (v^p + p) + T_0 v^p \right) & \delta v^{p^2(d_1 + 1) + p + p^4} \\
v^{-d_1 - p} \left( v^{d_1} (v^p + p) + T_0Y_1v^{p-1}\right) & \delta Y_1 v^{p+p^4 -1 + (p^2 - 1) d_1} \\
\end{pmatrix}
$$
qui, réduite dans $k_E[[v]]$, donne
$
\begin{pmatrix}
0  & \overline{\delta} v^h \\
1 & 0 \\
\end{pmatrix}
$
avec $h = p^4 + p + p^2(d_1 + 1)$.

Pour une variable $Z$ dans $\left\{ T_0, X_1, Y_1 \right\}$, désignons 
par $Z^\star$ le morphisme de $\oE$-algèbres de $R_\xi$ dans les nombres 
duaux $k_E[ \varepsilon]$ qui envoie $Z$ sur $\varepsilon$ et les autres 
variables sur $0$.
D'après la partie \ref{sssec:calculExt1}, les images de $T_0^\star$, 
$X_1^\star$ et $Y_1^\star$ dans $\Ext^1_{\Goo} (\rhobar, \rhobar)$ 
(décrit par la proposition \ref{propdecompositionext1}) sont 
respectivement 
$(\theta v^{-p-p^2(d_1 + 1)} , 0 ) $, $(\theta v^{-p-p^2} , 0 ) $ et $(v^{-p^2 - d_1 - 1} , 0 ) $.
 Par le 
lemme \ref{lemdecompositionext1}, ces trois vecteurs forment une famille 
libre. L'application $\tilde{g}$ est donc surjective, ce qui démontre que 
l'anneau $R^\psi(\vv_0,\ttt,\rhobar)$ est isomorphe à $\frac{\oE [[ T_0, 
X_1, Y_1 ]] }{X_1 Y_1 + p}$.
 
Nous traitons maintenant le cas du type galoisien $\omega_2^{r_0 - p} 
\oplus \mathds{1}$, avec $d = (0, p-r_0)$ et $r_0$ entre $1$ et $p-2$. 
L'anneau explicite est dans ce cas $\frac{\oE [[ X_0, Y_0, T'_1 ]] }{X_0 
Y_0 + p}$ et la matrice du $\phi$-module associé peut être mise sous la 
forme
$$
\begin{pmatrix}
 X_0 v^{d_1} + T'_1v^p & \delta v^{p^3 + p^2} (v^{d_1} + Y_0 T'_1)\\
 v^{-1}(v + p) & \delta Y_0 v^{(p+1)(p^2  - 1)}(v+p) \\
\end{pmatrix}.
$$
Les images de $X_0^\star$, $Y_0^\star$ et $T_1^{'\star}$ dans 
$\Ext^1_{\Goo} (\rhobar, \rhobar)$ sont respectivement 
$(\theta v^{-1-p}, 0)$, $(v^{-d_1 - p}, 0) $ et $(\theta v^{-d_1-p^2}, 0)$,
 qui forment une famille libre. Nous
concluons enfin de la même manière que précédemment.
\end{proof}

\begin{rem} 
Le résultat de la proposition \ref{prop:uniengeance} n'était connu
auparavant que pour les représentations génériques \cite{BM2}.
\end{rem}

\begin{rem}
\label{rem:totnongen}
L'hypothèse \og $\rhobar$ non totalement non générique \fg\ de  la proposition \ref{prop:uniengeance} n'exclut qu'un seul cas dans les deux premières lignes du tableau :
$r_0 = 0$ et $\ttt = \omega_2^{r_0} \oplus \omega_2^{-p} $, soit $d = (p-2 , p-1)$.
Dans ce cas, le début de la démonstration de la proposition reste valable, mais nous observons que les images de $X_1^\star$ et $Y_1^\star$ dans $\Ext^1_{\Goo} (\rhobar, \rhobar)$ 
sont toutes les deux égales à $(\delta^{-1}v^{-p-p^2} , 0 ) $.
L'injection $\tilde g$ n'est donc pas surjective.
\end{rem}

\subsubsection{Cas d'une variété de Kisin de dimension supérieure}
\label{sssec:recoll}

Nous traitons, dans ce paragraphe, le cas de la représentation
$\Ind_{G_{F'}}^{G_F} \big(\omega_4^{1 + r_0} \cdot \nr' (\theta) \big)$,
avec  $r_0$ dans $\{0, \ldots, p-3\}$, et du type galoisien 
donné par $\eta = \omega_2^{1 + r_0 - p}$ et $\eta' = \omega_2^{-1}$ ; $d$ est alors $(0, p-2-r_0) $, avec $p-2-r_0$ dans $\{1, \ldots, p-2\}$. 

Pour ces données, les $E'$-engeances sont en bijection naturelle avec 
$\P^1 (k_{E'})$ pour toute extension $E'$ de $E$.
En suivant à nouveau la stratégie expliquée dans la partie \ref{sssec:methode}, 
nous considérons une extension $E'$ de $E$ et nous associons à chaque 
$E'$-engeance $\xi'$, la $\oEp$-algèbre explicite $R_{\xi'} $ ainsi qu'un 
module de Breuil--Kisin $\MK_{\xi'}$ sur $R_{\xi'} \hat \otimes_{\Zp} \SK$ 
définis dans le \S \ref{sssec:methode}. Pour tout $\xi'$, posons
$$M_{\xi'} = H^0\big(\!\Gal(L/F), \ocE \otimes_\SK \MK_{\xi'}^{(0)}\big).$$
D'après les résultats de la partie \ref{ssec:descente}, l'espace $M_{\xi'}$ 
muni du Frobenius $\phi = \varphi^2$ s'identifie canoniquement au
$\phi$-module associé à la restriction à $\Goo$ de la représentation 
de $G_F$ correspondant à $\MK_{\xi'}$. De plus, la proposition 
\ref{prop:matrices} et la formule \eqref{eq:prodB} fournissent un 
moyen explicite pour calculer la matrice de $\phi$ agissant sur $M_{\xi'}$.

Dans le cas qui nous intéresse, nous trouvons, après calcul, les résultats
regroupés dans le tableau de la figure \ref{fig:matrices} (page
\pageref{fig:matrices}).
\begin{figure}
\begin{center}
\scriptsize
\begin{tabular}{|c|c|l|c|c|}
\mline
Engeance & Anneau & ~\hfill Paramètres \hfill~ & Matrice $B_{\xi'}$ de $\phi$ sur
$M_{\xi'}$
\\
\mline 
\begin{tabular}{c}
$\I_{\eta'} \times \I_\eta$ \\
$\xi' \in k_{E'}^\times$ 
\end{tabular} &
$\oEp[[ T_{\xi'} , Z_{\xi'}]]$ &
\hspace{-0.5em}\renewcommand{\arraystretch}{1}
\begin{tabular}{l}
$\alpha = [{\xi'}] + Z_{\xi'}$ \\
$\alpha' = - \delta\alpha^{-1}$ \\
$a'_0 = 1$ \\
$a_1 = [\theta{\xi'}^2] + T_{\xi'}$
\end{tabular} 
\hspace{-1em}\renewcommand{\arraystretch}{1.2}
&
$\begin{pmatrix}
\alpha v^{d_1+1} (v+p) & \alpha a'_0 v^{p^2 + d_1+1} (v+p) \\
\alpha' a_1 v^{d_1+1} & \alpha' a'_0 a_1 v^{p^2 + d_1+1} + \alpha' v^{p^2} (v^p + p)
\end{pmatrix}$
 \\
\hline
\begin{tabular}{c}
$\I_{\eta'} \times \II$ \\
$\xi' = 0$
\end{tabular} &
$\frac{\oEp[[T'_0, X_0, Y_0]]}{X_0 Y_0 + p}$ &
\hspace{-0.5em}\renewcommand{\arraystretch}{1}
\begin{tabular}{l}
$\alpha = 1$, 
$\alpha' = \delta$ \\
$a'_0 = T'_0$ \\
$a_1 = X_0$ \\ $a'_1 = Y_0$
\end{tabular} &
\hspace{-1em}\renewcommand{\arraystretch}{1.2}
$\begin{pmatrix}
\alpha a_1 v^{d_1+1} & \alpha a'_0 a_1 v^{p^2 + d_1+1} + \alpha v^{p^2+1} (v^p + p) \\
\alpha' v^{d_1+1} & \alpha' a'_0 v^{p^2 + d_1+1} + \alpha' a'_1 v^{p^2} (v^p + p)
\end{pmatrix}$
\\
\hline
\begin{tabular}{c}
$\II \times \Ieta$ \\
$\xi' = \infty$
\end{tabular} &
$\frac{\oEp[[X_\infty, Y_\infty, T_\infty]]}{X_\infty Y_\infty + p}$ &
\hspace{-0.5em}\renewcommand{\arraystretch}{1}
\begin{tabular}{l}
$\alpha = 1$, 
$\alpha' = \delta$ \\
$a_0 = X_\infty$ \\ $a'_0 = Y_\infty$ \\
$a_1 = T_\infty$
\end{tabular} &
\hspace{-1em}\renewcommand{\arraystretch}{1.2}
$\begin{pmatrix}
\alpha a_0 v^{d_1+1} (v+p) & \alpha v^{p^2+d_1+1} (v+p) \\
\alpha' a_0 a_1 v^{d_1+1} + \alpha' v^{p} & \alpha' a_1 v^{p^2 + d_1+1} + \alpha' a'_0 v^{p^2}
\end{pmatrix}$
\\
\mline
\end{tabular}
\end{center}
\caption{Matrices du Frobenius sur les $\phi$-modules $M_{\xi'}$}
\label{fig:matrices}
\end{figure}
Au niveau des anneaux de déformations, nous obtenons un morphisme de 
$\oEp$-algèbres $f_{E'}$ de $\oEp \otimes_{\oE} R^\psi(\rhobar)$ dans 
l'anneau explicite
$$R_{\expl,E'}= \prodloc {\xi'}{k_{E'}} R_{\xi'}$$
où la notation ci-dessus désigne le \og produit local \fg.
D'après le lemme \ref{lem:factorf}, ce morphisme se factorise en un morphisme 
$\tilde f_{E'} : \oEp \otimes_{\oE} R^\psi(\vv_0, \ttt, \rhobar) \to 
R_{\expl,E'}$. De plus, si $E''$ désigne une extension finie de $E'$, nous 
avons un morphisme $\psi_{E'', E'} : R_{\expl,E''} \to \oEpp \otimes_{\oEp} 
R_{\expl,E'}$ rendant commutatif le diagramme suivant :
$$\xymatrix @C=50pt @R=5pt {
& R_{\expl, E''} \ar[dd]^-{\psi_{E'',E'}} \\
\oEpp \otimes_{\oE} R^\psi(\vv_0, \ttt, \rhobar) 
\ar[ru]^-{\tilde f_{E''}} \ar[rd]_-{\tilde f_{E'}} \\
& \oEpp \otimes_{\oEp} R_{\expl, E'}.
}$$
Notons $V_{E'}$ la $R_{\expl, E'}$-représentation de $G_F$ correspondant à 
$f_{E'}$. L'action de $\phi$ sur le $\phi$-module $M_{E'}$ associé à 
$(V_{E'})_{|\Goo}$ s'obtient en mettant ensemble les matrices qui 
apparaissent dans le tableau ci-dessus.

Suivant la stratégie de détermination de l'espace de déformations expliquée 
dans le \S \ref{ssec:calculdeform}, nous devons maintenant exhiber une 
$\oE$-algèbre explicite $R_\expl$ telle que, pour tout $E'$ comme 
précédemment, le morphisme $\tilde f_{E'}$ se factorise par $\oEp 
\otimes_{\oE} R_\expl$.
Pour construire $R_\expl$, nous cherchons pour chaque engeance $\xi' \in 
\P^1(k_{E'})$ une nouvelle base de $M_{\xi'}$, de façon à ce que les 
matrices des $\phi$-modules exprimées dans ces nouvelles bases \og se 
ressemblent \fg\ davantage. Une première tentative conduit à considérer 
les changements de base que voici :

\begin{center}
\small
\begin{tabular}{|c|c|c|}
\mline
$\xi'$ & Changement de base $P$ & Matrice $P^{-1} B_{\xi'} \phi(P)$  de $\phi$ dans la nouvelle base \\
\mline
$\xi' \in k_{E'}^\times$ &
$\begin{pmatrix}
-a'_0 & 0 \\ v^{-1} & \alpha' v^{-1}
\end{pmatrix}$ &
$v \cdot \begin{pmatrix}
0 & \delta v^{d_1} (v+p) \\
v^p + p & p \alpha'  + p \alpha v^{d_1} + (\alpha' a'_0 a_1 + \alpha) v^{d_1 + 1} + \alpha' v^p
\end{pmatrix}$ \\
\hline
$\xi' = 0$ &
$\begin{pmatrix}
-a'_0 & \alpha \\ v^{-1} & \alpha' a'_1 v^{-1}
\end{pmatrix}$ &
$v \cdot \begin{pmatrix}
0 & \delta v^{d_1} (v+p) \\
v^p + p & p \alpha' a'_1 + \alpha a_1 v^{d_1} + \alpha' a'_0 v^{d_1 + 1} + \alpha' a'_1 v ^p
\end{pmatrix}$ \\
\hline
$\xi' = \infty$ &
$ \begin{pmatrix}
1 & 0 \\ - a_0 v^{-1} & \alpha' v^{-1}
\end{pmatrix}$ &
$v \cdot \begin{pmatrix}
0 & \delta v^{d_1} (v+p) \\
v^p + p & \alpha' a'_0  + p \alpha a_0 v^{d_1} + (\alpha a_0 + \alpha' a_1) v^{d_1 + 1}
\end{pmatrix}$ \\
\mline
\end{tabular}
\end{center}

Nous observons que les matrices sont analogues à la différence près 
qu'aucun terme en $v^p$ n'apparaît sur le coefficient en bas à droite 
dans le cas $\xi' = \infty$. Le but du lemme suivant est d'unifier 
définitivement ces écritures.

\newcommand{\ocER}{R \hat\otimes_{\Zp} \ocE}
\newcommand{\NPoly}{\mathrm{NP}}

\begin{lem}\label{groscalcul} 
Soit $R$ une $\oEp$-algèbre locale noetherienne complète de corps 
résiduel $k_{E'}$ et soient $\delta$, $a$, $b$, $c$ et $d$ des éléments de 
$R$. Nous supposons que $p$ divise $a$, que $\delta$ est inversible dans 
$R$ et que $ab = -\delta p^2$.
Alors il existe un élément $c' \in R$ tel que les matrices
$$M=\begin{pmatrix}
0 & \delta v^{d_1}(v+p) \\ v^p + p & a+bv^{d_1}+c v^{d_1+1}+dv^p
\end{pmatrix}\quad \text{et} \quad 
M' = \begin{pmatrix}
0 & \delta v^{d_1}(v+p) \\ v^p + p & a+bv^{d_1}+c' v^{d_1+1}
\end{pmatrix}$$
définissent des $\phi$-modules isomorphes sur $\ocER$.

De plus, nous pouvons choisir $c'$ congru à $c$ modulo $p$ et 
dépendant de façon analytique de $\delta, a, b, c, d$.
\end{lem}

\begin{rem}
\label{rem:depanalytique}
La dépendance analytique énoncée dans le lemme \ref{groscalcul} signifie 
qu'il existe une série $\mathcal F(\Delta, A, B, C, D)$ à coefficients 
dans $R$ telle que $c' = \mathcal F(\delta, a, b, c, d)$.
\end{rem}

\begin{proof}
\newcommand{\RHS}{g}
Tout au long de la démonstration, nous posons :
$$A = a+bv^{d_1}+c v^{d_1+1}+dv^p, \, B = a+bv^{d_1}+c' v^{d_1+1}, \, 
S = v^p + p \, \text{ et }\, T = \delta v^{d_1}(v+p).$$
Les matrices $M$ et $M'$ définissent des $\phi$-modules sur $\ocER$ 
isomorphes entre eux si et seulement s'il existe une matrice
$$P=\begin{pmatrix}x&y\\ z&t\end{pmatrix}\in \GL_2(\ocER)$$
telle que $M'\phi(P)=PM$.
Si tel est le cas,  les éléments $x,y,z,t$ de $\ocER$ satisfont le système suivant
\begin{equation}\label{Systeme-bidouille}
\left\{\begin{array}{l} Sy=T\phi(z)\cr Tx+Ay=T\phi(t)\cr
St=S\phi(x)+B\phi(z)\cr
Tz+At=S\phi(y)+B\phi(t)\cr
\end{array}\right.
\end{equation}
Nous allons donc exhiber un élément $c'$ dans $R$, analytique en 
$\delta,a,b,c,d$ tel que les matrices $M$ et $M'$ soient équivalentes, 
\emph{i.e.} que le système \eqref{Systeme-bidouille} ait une solution. 
Nous cherchons $z$ sous la forme $z=(v+p^p)(\xi+z')$ avec $\xi$ dans $R$ et 
$z'$ dans $vR[[v]]$. Le système \eqref{Systeme-bidouille} se réécrit
\begin{equation}\label{systeme'-bidouille}
\left\{\begin{array}{l}
Sy=T(v^{p^2}+p^p)(\xi+\phi(z'))\cr
Tx+Ay=T\phi(t)\cr
St=S\phi(x)+B(v^{p^2}+p^p)(\xi +\phi(z'))\cr
T(v+p^p)(\xi+z')=S\phi(y)+B\phi(t)-At\cr
\end{array}\right.
\end{equation}
En posant
$U={v^{p^2}+p^p\over v^p+p}=v^{p(p-1)}-pv^{p(p-2)}+\cdots+(-p)^{p-1}$,
nous arrivons à :
\begin{eqnarray*}
y & = & TU(\xi+\phi(z')) \\
 \phi(t) - x & = & AU(\xi +\phi(z')) \\
t - \phi(x) & = & BU(\xi+\phi(z'))
\end{eqnarray*}
En éliminant $x$, nous trouvons :
$$(\Id-\phi^2)(t) = BU(\xi+\phi(z')) - \phi(AU)(\xi+\phi^2(z')).$$
Nous en déduisons une expression en $t$ en fonction 
de $z'$ :
$$t=\tau+\xi\sum_{i=0}^\infty 
\phi^{2i}(BU-\phi(AU))+\sum_{i=0}^\infty\phi^{2i}(BU\phi(z')-\phi(AU)\phi^2(z')).$$
pour un certain élément $\tau$ de $R$. Nous allons démontrer que, pour 
$\tau=1$, le système \eqref{systeme'-bidouille} a une solution. En 
remplaçant $y$ et $t$ par leurs valeurs dans la dernière équation de 
\eqref{systeme'-bidouille}, ceci revient à montrer qu'il existe $c'$,
$\xi$ et $z'$ solutions de 
\begin{equation}
\label{equation-bidouille}
T(v+p^p)(\xi+z') = \RHS(c', \xi, z')
\end{equation}
avec :
{\small
$$ \begin{array}{l}
\RHS(c', \xi, z') = S \phi(TU) (\xi + \phi^2(z')) \smallskip \\
\displaystyle \hspace{5em}
                  + \: B + B\xi\sum_{i=0}^\infty\phi^{2i+1}(BU-\phi(AU))
                  + \: B\sum_{i=0}^\infty\phi^{2i+1}(BU\phi(z')-\phi(AU)\phi^2(z')) \nonumber \smallskip \\
\displaystyle \hspace{5em}
                  - \: A - A\xi\sum_{i=0}^\infty \phi^{2i}(BU-\phi(AU))
                  - \: A\sum_{i=0}^\infty\phi^{2i}(BU\phi(z')-\phi(AU)\phi^2(z')).
\end{array}
$$}
Pour résoudre cette équation, nous procédons par approximations 
successives. Nous allons construire par récurrence des suites 
convergentes $(\xi_n)_{n\geq 0}$, $(c'_n)_{n\geq 0}$ à valeurs dans $R$ 
et $(z'_n)_{n\geq 0}$ à valeurs dans $vR[[v]]$ qui convergent vers une 
solution de \eqref{equation-bidouille}. Posons pour initialiser la
construction $\xi_0=0$, $z'_0=0$, $c'_0=c$. Supposons à présent que 
$\xi_n$, $c'_n$ et $z_n'$ sont construits pour un certain entier $n$.
Nous allons déterminer $\xi_{n+1}$ et $c_{n+1}'$ de façon à ce que 
$\RHS(c'_{n+1}, \xi_{n+1}, z'_n)$ soit divisible par $T(v+p^p)$.

Nous remarquons que tous les termes de $\RHS(c'_{n+1}, \xi_{n+1}, z'_n)$ 
sont divisibles par $v^{d_1}$. Comme $T = \delta v^{d_1}(v+p)$, la 
divisibilité par $T(v+p^p)$ est équivalente à ce que $\RHS(c'_{n+1}, 
\xi_{n+1}, z'_n)$ s'annule en $v = -p$ et en $v = -p^p$. En isolant
les termes 
$$(c'_{n+1} - c) v^{d_1 + 1}
\quad ; \quad
d v^p 
\quad \text{et} \quad 
\xi_{n+1} \delta p^{p+1} v^{d_1}$$
dans l'expression de $\RHS(c'_{n+1}, \xi_{n+1}, z'_n)$, nous voyons
(après calcul) que ces conditions d'annulation se réécrivent sous la forme :
$$\begin{array}{rcl}
c'_{n+1}-c-d(-p)^{p-d_1-1}-\delta p\xi_{n+1}&=&p^{p-1} \cdot S_2(c'_{n+1}-c-d(-p)^{p-d_1-1},\xi_{n+1}), \smallskip \cr
c_{n+1}'-c-d(-p)^{p-d_1-1}&=&p^{p} \cdot S_1(c'_{n+1}-c-d(-p)^{p-d_1-1},\xi_{n+1}), \cr
\end{array}$$
où $S_1$ et $S_2$ sont des séries formelles en deux variables à coefficients
dans $R$.
Nous en déduisons que $(c'_{n+1}-c-d(-p)^{p-d_1-1},\xi_{n+1})$ est un
point fixe de l'application :
$$\begin{array}{rcl}
f_{n+1} : \quad R^2 & \longrightarrow & R^2 \\
(x,y) & \mapsto & \big(p^p S_1(x,y), \, 
p^{p-1}\delta^{-1} S_1(x,y) - p^{p-2} \delta^{-1} S_2(x,y)\big).
\end{array}$$
Or cette application est manifestement contractante. L'existence des
éléments $c_{n+1}$ et $\xi_{n+1}$ s'ensuit.
À partir de là, nous définissons $z'_{n+1}$ par
$$z'_{n+1}=-\xi_{n+1}+{g(c'_{n+1}, \xi_{n+1}, z'_n) \over\delta v^{d_1}(v+p)(v+p^p)}$$
et vérifions que $z_{n+1}'$ est multiple de $v$ en regardant le coefficient en $v^{d_1}$ dans $\RHS(c'_{n+1}, \xi_{n+1}, z'_n)$.

Il s'agit à présent de montrer que les suites ainsi définies $(\xi_n)$, $(z'_n)$ et $(c_n')$ convergent. Pour cela, montrons par récurrence sur $n$ que
\begin{itemize}
\item[$\bullet$] $p^n$ divise $c'_n-c'_{n-1}$ et $\xi_n-\xi_{n-1}$,
\item[$\bullet$] le polygone de Newton $\NPoly(z_n'-z_{n-1}')$ est 
situ\'e dans la zone grisée $\mathcal P_n$ définie dans le graphe 
ci-dessous.
\end{itemize}

\begin{center}
\begin{tikzpicture}
\draw[->, thick] (-0.5,0)--(12,0);   
\draw[->, thick] (0,-0.5)--(0,4);   
\fill[opacity=0.3] (1,4)--(1,3)--(10,0)--(12,0)--(12,4)--cycle;
\draw[very thick] (1,4)--(1,3)--(10,0)--(12,0);
\draw[dotted] (0,3)--(1,3);
\draw[dotted] (1,0)--(1,3);
\node[below left] at (0,0) { $0$ };
\node[below] at (1,0) { $1$ };
\node[below] at (10,0) { $pn+1$ };
\node[left] at (0,3) { $n$ };
\end{tikzpicture}
\end{center}

\noindent
Par un abus de notation limité à la fin de cette démonstration, notons 
$\phi:(x,y)\mapsto (p^2x,y)$. Supposons que les hypothèses de récurrence 
sont satisfaites pour un entier $n\geq 1$. Alors 
$$\NPoly(\phi(z'_n)-\phi(z'_{n-1}))\subset\phi(\mathcal P_n)$$
d'où nous déduisons 
$$\NPoly(U\phi(z'_n)-U\phi(z'_{n-1}))\subset \phi(\mathcal P_n)+(0,p-1)$$ 
car $\NPoly(U)$ a une seule pente qui vaut $-1/p$ comme celle de 
$\phi(\mathcal P_n)$.
Un calcul montre que $f_n\equiv f_{n-1}\pmod {p^{n+p^2+p-1}}$.
Ainsi, si $t_n$ et $t_{n-1}$ sont respectivement les points fixes de 
$f_n$ et $f_{n-1}$, nous avons
$$f_n(t_{n-1})\equiv f_{n-1}(t_{n-1})\equiv t_{n-1} \pmod {p^{n+p^2+p-1}}$$
Donc $f_n^k(t_{n-1})\equiv t_{n-1}\pmod {p^{n+p^2+p-1}}$ pour tout $k$ dans $\N$. 
Comme $f_n$ est contractante, en passant à la limite sur $k$, nous trouvons
$t_n\equiv t_{n-1}\pmod {p^{n+p^2+p-1}}$ et, par suite :
\begin{equation}
\label{eq:cpn}
c'_n\equiv c_{n-1}'\pmod {p^{n+1}}
\quad \text{et} \quad \xi_n\equiv \xi_{n+1}\pmod {p^{n+1}}.
\end{equation}
Enfin :
$$z'_{n+1}-z'_n=\xi_n-\xi_{n+1}+{\RHS(c'_{n+1}, \xi_{n+1}, z'_n)
- \RHS(c'_n, \xi_n, z'_{n-1})
\over \delta v^{d_1}(v+p)(v+p^p)}$$
Les calculs précédents garantissent que le polygone de Newton du 
numérateur de la fraction ci-dessus est situé au dessus de la droite 
reliant les points $(0, n+p^2-1)$ et $(p^2(n+p^2-p-1), 0)$. Or, la 
division par $\delta v^{d_1}(v+p)(v+p^p)$ translate le polygone de 
Newton d'au maximum $(-d_1-2,0)$. Nous concluons alors la récurrence en 
remarquant que $z'_{n+1}-z'_n$ n'a pas de terme constant.

De \eqref{eq:cpn}, nous déduisons que $c' \equiv c \pmod p$. Et, enfin,
en reprenant point par point les étapes de la démonstration, nous
constatons que le $c'$ que nous avons construit dépend effectivement
de façon analytique de $a$, $b$, $c$, $d$ et $\delta$ (sachant que le
point fixe d'une application contractante s'obtient comme la limite
des itérés successifs d'un point initial quelconque).
\end{proof}

Posons à présent $R_\expl = \oE[[X,Y,Z]]/(XY + \delta p^2)$ et 
considérons le $\phi$-module libre de rang $2$ sur $R_\expl \hat
\otimes_{\Zp} \ocE$ sur lequel l'action de $\phi$ dans une base est
donnée par la matrice
$$\begin{pmatrix}
0 & \delta v^{d_1 + 1} (v+p) \\
v(v^p + p) & Xv + Y v^{d_1 + 1} + Z v^{d_1+2}
\end{pmatrix}.$$
Pour tout $\xi'$  dans $E'$, considérons le morphisme
$$h_{\xi'} : \oEp \otimes_{\oE} R_\expl \simeq \oEp[[X,Y,Z]]/(XY + \delta 
p^2) \longrightarrow R_{\xi'}$$
défini par :
\begin{itemize}
\item si $\xi' \in k_{E'}^\times$ :
$\left\{
\begin{array}{l}
X \mapsto p\alpha' \\
Y \mapsto p\alpha \\
Z \mapsto
 \mathcal F(\delta,\, p\alpha',\, 
            p \alpha, \, \alpha' a'_0 a_1 + \alpha, \,\alpha')
\end{array}
\right.$
\item si $\xi' = 0$ :
$\left\{
\begin{array}{l}
X \mapsto p \alpha' a'_1 \\
Y \mapsto \alpha a_1 \\
Z \mapsto
 \mathcal F(\delta,\, p\alpha' a'_1,\, 
            \alpha a_1, \, \alpha' a'_0, \,\alpha' a'_1)
\end{array}
\right.$
\item si $\xi' = \infty$ :
$\left\{
\begin{array}{l}
X \mapsto \alpha' a'_0 \\
Y \mapsto p\alpha a_0 \\
Z \mapsto \alpha a_0 + \alpha' a_1
\end{array}
\right.$
\end{itemize}
où $\alpha, \alpha', a_0, a'_0, a_1, a'_1$ s'expriment en fonction des
variables de $R_{\xi'}$ comme indiqué dans le tableau du début de la partie \ref{sssec:recoll}
et où
$\mathcal F(\Delta, A, B, C, D)$ est la série dont il est question dans
la remarque \ref{rem:depanalytique}. Le produit des flèches $h_{\xi'}$
définit un morphisme $h_{E'} : \oEp \otimes_{\oE} R_\expl \to 
R_{\expl, E'}$. En outre, les applications $h_{\xi'}$ sont définies de façon 
à ce que $R_{\expl, E'} \hat \otimes_{R_\expl} M_\expl$ soit isomorphe à 
$M_{E'}$. D'autre part, nous remarquons que $h_{E'}$ est injective pour 
tout $E'$. La proposition \ref{prop:factorisationf} entraîne alors 
l'existence d'une factorisation de $\tilde f_{E'}$ :
$$\tilde f_{E'} : \oEp \otimes_{\oE} R^\psi(\vv_0, \ttt, \rhobar)
\stackrel{\tilde g_{E'}}{\longrightarrow}
\oEp \otimes_{\oE} R^\expl 
\stackrel{\tilde h_{E'}}{\longrightarrow}
R_{E'}.$$
De plus, ces factorisations commutent aux changements de base, dans le sens 
où l'application $\tilde g_{E'}$ s'identifie à $\Id \otimes \tilde g_E$ 
pour toute extension finie $E'$ de $E$. Le lemme \ref{lem:tildeginj} 
s'applique ainsi et assure que le morphisme $\tilde g_E : R^\psi(\vv_0, \ttt, 
\rhobar) \to R_\expl$ est injectif.
À ce stade, il reste à déterminer son image par la méthode du \S 
\ref{par:interlude}. Pour cela, nous remarquons que la matrice de 
$\phi$ agissant sur $k_E \otimes_{R_\expl} M_\expl$ s'écrit :
$$\begin{pmatrix}
0 & \overline{\delta} v^{d_1+2} \\
v^{p + 1} & Xv + Y v^{d_1 + 1} + Z v^{d_1+2}
\end{pmatrix}.$$
Dans la nouvelle base où le second vecteur a été multiplié par $v^{p + 1}$,
elle prend la forme :
$$\begin{pmatrix}
0 & \theta^{-1} v^{p^3+ p^2 + d_1+2} \\
1 & X v^{p^3 + p^2-p} + Y v^{p^3 + p^2-p+d_1} + Z v^{p^3 + p^2- p + d_1+1}
\end{pmatrix}$$
qui a été considérée dans la partie \ref{sssec:calculExt1}. Notons $X^\star 
: R_\expl \to k_E[\varepsilon]$ (avec, rappelons-le, $\varepsilon^2 = 0$) 
le morphisme de $\oE$-algèbres qui envoie la variable $X$ sur $\varepsilon$ 
et les autres variables $Y$ et $Z$ sur $0$. Définissons également $Y^\star$
et $Z^\star$ de manière analogue.
Un calcul élémentaire montre que les images de $X^\star$, $Y^\star$ 
et $Z^\star$ dans $\Ext^1_{\Goo} (\rhobar, \rhobar)$ sont respectivement
$$(\theta v^{-p-d_1-2}, 0), \quad  (\theta v^{-p-2}, 0), \quad (\theta v^{-p-1}, 0)$$
où, ici, nous avons identifié $\Ext^1_{\Goo}(\rhobar, \rhobar)$ avec
sa description explicite donnée par la proposition \ref{propdecompositionext1}.
Il résulte du lemme \ref{lemdecompositionext1} que ces images sont
linéairement indépendantes dans $\Ext^1_{\Goo}(\rhobar, \rhobar)$ et, par 
suite, que l'application tangente à $\tilde g_E$ est injective. Nous en 
déduisons que $\tilde g_E$ est surjectif ; c'est donc un isomorphisme.

Au final, nous avons donc démontré la proposition suivante.

\begin{prop}
\label{prop:XYp2}
Soit $\rhobar = \Ind_{G_{F'}}^{G_F} (\omega_4^{1 + r_0} \cdot \nr' (\theta))$ pour un
entier $r_0$ dans $\{0, \ldots, p-3\}$.
Alors l'anneau de déformations $R^\psi(\vv_0, \ttt, \rhobar)$ s'identifie à
$\frac{\oE[[X,Y,Z]]}{(XY + p^2)}$.
\end{prop}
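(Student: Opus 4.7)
Le plan est de suivre exactement la strat�gie d�gag�e au \S \ref{sssec:methode}, en exploitant la description des engeances donn�e dans la figure \ref{fig:engeances}. Pour cette repr�sentation $\rhobar$ et ce type $\ttt$, les $E'$-engeances forment une famille naturellement param�tr�e par $\P^1(k_{E'})$, avec trois genres : $\I_{\eta'}\times\II$ en $\xi'=0$, $\II\times\I_\eta$ en $\xi'=\infty$, et $\I_{\eta'}\times\I_\eta$ sur l'ouvert $\xi'\in k_{E'}^\times$. La vari�t� de Kisin �tant ici de dimension $1$, la remarque \ref{rem:dimnulle} ne s'applique plus et il faudra donc construire � la main un recollement $R_\expl$ des anneaux explicites $R_{\xi'}$.

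Ma premi�re t�che sera de calculer, pour chaque $\xi'\in\P^1(k_{E'})$, la matrice $B_{\xi'}$ du Frobenius $\phi=\varphi^2$ agissant sur $M_{\xi'}=H^0(\Gal(L/F),\ocE\otimes_\SK\MK_{\xi'}^{(0)})$, � l'aide de la proposition \ref{prop:matrices} et de la formule \eqref{eq:prodB}. Ensuite j'effectuerai un changement de base judicieux afin que les trois matrices prennent une forme quasi-identique, du type
\begin{equation*}
v\cdot\begin{pmatrix} 0 & \delta v^{d_1}(v+p) \\ v^p+p & a+bv^{d_1}+c v^{d_1+1}+d v^p\end{pmatrix}
\end{equation*}
avec $p\mid a$ et $ab=-\delta p^2$, la seule diff�rence r�sidant dans la pr�sence d'un terme $dv^p$ parasite.

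L'�tape cruciale, et le principal obstacle technique, consistera � d�montrer un lemme d'unification disant que ce terme $dv^p$ peut toujours �tre absorb� dans le coefficient en $v^{d_1+1}$. Plus pr�cis�ment, je voudrais trouver $c'\equiv c\pmod p$, d�pendant de fa�on analytique de $(\delta,a,b,c,d)$, tel que les matrices avec et sans $dv^p$ d�finissent des $\phi$-modules isomorphes sur $R\hat\otimes\ocE$. L'approche naturelle est de chercher une matrice de passage $P=\smat{x&y\\z&t}$, de param�trer $z=(v+p^p)(\xi+z')$, et de ramener le syst�me \`a une �quation fonctionnelle en $(c',\xi,z')$ que l'on r�sout par approximations successives. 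La partie d�licate sera de v�rifier la convergence, \emph{via} un contr�le simultan� des valuations $p$-adiques de $c'_n-c'_{n-1}$ et $\xi_n-\xi_{n-1}$ et du polygone de Newton de $z'_n-z'_{n-1}$ ; les conditions $p\mid a$ et $ab=-\delta p^2$ serviront pr�cis�ment � rendre contractante l'it�ration produisant $(c',\xi)$.

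Une fois cette unification obtenue, je d�finirai $R_\expl=\oE[[X,Y,Z]]/(XY+\delta p^2)$ muni du $\phi$-module universel de matrice
\begin{equation*}
\begin{pmatrix} 0 & \delta v^{d_1+1}(v+p) \\ v(v^p+p) & Xv+Y v^{d_1+1}+Z v^{d_1+2}\end{pmatrix},
\end{equation*}
et je construirai pour chaque $\xi'$ un morphisme $h_{\xi'}:\oEp\otimes_\oE R_\expl\to R_{\xi'}$ qui r�cup�re, via changement de base, le $\phi$-module $M_{\xi'}$ sous sa forme normalis�e ; la d�finition de $h_{\xi'}$ utilisera explicitement la s�rie $\mathcal F$ produite par le lemme d'unification. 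Comme chaque $h_{\xi'}$ est manifestement injectif, la proposition \ref{prop:factorisationf} fournira une factorisation $\tilde f_{E'}=\tilde h_{E'}\circ(\Id\otimes\tilde g_E)$ compatible aux changements de corps, puis le lemme \ref{lem:tildeginj} entra\^{\i}nera l'injectivit� de $\tilde g_E:R^\psi(\vv_0,\ttt,\rhobar)\to R_\expl$. Il ne restera alors qu'� v�rifier la surjectivit� par la m�thode du \S\ref{par:interlude} : calculer les images dans $\Ext^1_{\Goo}(\rhobar,\rhobar)$ des trois morphismes $X^\star,Y^\star,Z^\star:R_\expl\to k_E[\varepsilon]$ via la proposition \ref{propdecompositionext1}, constater que ce sont les classes de trois mon\^omes distincts en $v$ dans le premier facteur, et invoquer le lemme \ref{lemdecompositionext1} pour conclure � leur ind�pendance lin�aire, donc � l'injectivit� de l'application tangente et, par le lemme \ref{lem:fsurj}, � la surjectivit� recherch�e.
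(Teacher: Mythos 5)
Votre plan reproduit essentiellement la d\'emonstration du texte : m\^emes engeances param\'etr\'ees par $\P^1(k_{E'})$, m\^eme lemme d'unification (le lemme \ref{groscalcul}, r\'esolu par approximations successives avec le m\^eme contr\^ole des valuations et des polygones de Newton), m\^eme anneau $R_\expl=\oE[[X,Y,Z]]/(XY+\delta p^2)$ avec la m\^eme matrice universelle, m\^emes morphismes $h_{\xi'}$ via la s\'erie $\mathcal F$, puis injectivit\'e par le lemme \ref{lem:tildeginj} et surjectivit\'e par le calcul des images de $X^\star,Y^\star,Z^\star$ dans $\Ext^1_{\Goo}(\rhobar,\rhobar)$. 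Il ne manque que la remarque finale, imm\'ediate, que l'envoi de $X$ sur $\delta^{-1}X$ identifie $\oE[[X,Y,Z]]/(XY+\delta p^2)$ \`a $\oE[[X,Y,Z]]/(XY+p^2)$.
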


\begin{proof}
D'après ce qui précède, l'application $\tilde g_E$ réalise un isomorphisme 
entre $R^\psi(\vv_0, \ttt, \rhobar)$ et $\frac{\oE[[X,Y,Z]]}{(XY + \delta 
p^2)}$. Or cette dernière $\oE$-algèbre est isomorphe à 
$\frac{\oE[[X,Y,Z]]}{(XY + p^2)}$, l'isomorphisme étant obtenu par exemple 
en envoyant $X$ sur $\delta^{-1} X$, $Y$ sur $Y$ et $Z$ sur $Z$.
\end{proof}

\subsection{Calcul des multiplicités intrinsèques}\ 
\label{ssec:mipS}

Dans cette dernière partie, nous appliquons les résultats précédents aux calculs des multiplicités intrinsèques $(m_{\rhobar}(\sigma))_{\sigma \in \D(\rhobar)}$ des poids de Serre de $\rhobar$.
Le th\'eor\`eme~A de \cite{GK} fournit la conjecture \ref{conjprincipale} pour $\vv_0$ et tout type galoisien $\ttt$.
Il assure également que $m_{\rhobar}(\sigma)$ est non nulle si et seulement si $\sigma$ est un poids de $\rhobar$.
Le théorème  s'\'ecrit donc ici
$$
\mu_{\Gal}(\vv_0, \ttt, \rhobar) = \sum\limits_{\sigma \in  \D(\rhobar) \cap \D(\vv_0, \ttt)} m_{\vv_0,\ttt}(\sigma) m_{\rhobar}(\sigma).
$$
Pour les types $\ttt$ consid\'er\'es ici, nous avons $\D(\vv_0, \ttt) = 
\D(\ttt)$, la multiplicité $m_{\vv_0,\ttt}(\sigma)$ vaut~$1$ pour tout 
$\sigma$ dans $\D(\ttt)$ et l'ensemble $\D(\ttt)$ est décrit par une 
formule combinatoire rappelée dans la partie \ref{sssec:poidstype} (voir \cite{BP} et \cite{Da}). Nous obtenons donc l'équation 
$$
\mu_{\Gal}(\vv_0, \ttt, \rhobar) = \sum\limits_{\sigma \in  \D(\rhobar) \cap \D(\ttt)}  m_{\rhobar}(\sigma),
$$
avec $m_{\rhobar}(\sigma)$ dans $\N^{*}$ pour tout $\sigma$ dans $\D(\rhobar)$.
Le tableau de la figure \ref{fig:poidsserre} (page \pageref{fig:poidsserre})
rassemble, pour les représentations $\rhobar$ non génériques, les 
types~$\ttt$ ayant des poids de Serre en commun avec $\rhobar$ et 
l'ensemble $ \D(\rhobar) \cap \D(\ttt)$ de ces poids communs. Le code
couleur utilisé dans ce tableau est le même que celui que nous avions
introduit pour le tableau de la figure \ref{fig:engeances} : le bleu 
correspond aux couples $(\rhobar, \ttt)$ pour lesquels la variété de Kisin
est isomorphe à $\P^1_{k_E}$.
Nous nous apercevons que les lignes bleues sont exactement celles qui 
contiennent un poids modifié de $\rhobar$. Les résultats de la partie 
\ref{ssec:recollement} se résument donc comme suit.

\renewcommand{\arraystretch}{1.3}

\begin{figure}
\begin{center}
\small
\begin{tabular}{| c | c | c | c |}
\mline
\multicolumn{2}{| c |}{Représentation  $\rhobar = \Ind_{G_{F'}}^{G_F} (\chi \cdot \nr' (\theta) )$}  &Type $\ttt = \eta \oplus \eta'$  & Poids de Serre dans $\D(\ttt) \cap \D(\rhobar)$ \\
\mline
\multirow{6}{*}{$\chi = \omega_4^{ 1 + r_0}$} & \multirow{2}{*}{$0 \leq r_0 \leq p-2$}& \multirow{2}{*}{$ \omega_2^{r_0} \oplus \omega_2^{-p}$} & $(p-2 - r_0, 0 ) \otimes \det^{1 + r_0 - p}$ \\
& & &  $( p -1 - r_0, p-2) \otimes \det^{r_0}$ \\
\cline{2 - 4}
& \multirow{2}{*}{$1 \leq r_0 \leq p-2$}& \multirow{2}{*}{$\omega_2^{r_0 - p} \oplus \mathds{1}$}  &  $( p-1 - r_0, p-2) \otimes \det^{r_0}$ \\
& & &  $( r_0 - 1, p -1 ) \otimes \mathds{1}$ \\
 \cline{2-4}
& \multirow{2}{*}{\color{bleu} $0 \leq r_0 \leq p-3$} 
& \multirow{2}{*}{\color{bleu} $ \omega_2^{1 + r_0 - p} \oplus \omega_2^{-1}$}  
& {\color{bleu} $( r_0 + 1, p-1 ) \otimes \det^{-1}$} \\
& & & {\color{bleu} $( p - 2 - r_0, 0) \otimes \det^{1 + r_0 - p}$} \\
\mline
 \multirow{6}{*}{$ \chi = \omega_4^{ p(2 + r_1)}$}  & \multirow{2}{*}{$-1 \leq r_1 \leq p-3$} & \multirow{2}{*}{$ \omega_2^{p(1 + r_1)} \oplus \omega_2^{-1}$} & $( 0 , p-3 - r_1) \otimes \det^{-1 + p(2+r_1)}$ \\
& & & $( p -2,  p-2 - r_1) \otimes \det^{p(1 + r_1)}$ \\
\cline{2-4}
 & \multirow{2}{*}{$ 0 \leq r_1 \leq p-3$} &\multirow{2}{*}{ $ \omega_2^{ - 1 + p (1 + r_1) } \oplus \mathds{1}$ } & $( p -2,  p-2 - r_1) \otimes \det^{p(1 + r_1)}$ \\
& & &  $( p -1, r_1 ) \otimes \mathds{1}$  \\
\cline{2-4}
& \multirow{2}{*}{\color{bleu} $-1 \leq r_1 \leq p-4$} 
& \multirow{2}{*}{\color{bleu} $\omega_2^{- 1 + p(r_1 + 2) } \oplus \omega_2^{-p}$}  
& {\color{bleu} $( p -1, r_1 + 2 ) \otimes \det^{-p}$} \\
& & & {\color{bleu} $( 0 , p-3 - r_1) \otimes \det^{-1 + p(2+r_1)}$} \\
\mline
\end{tabular}
\end{center}
\caption{Poids de Serre communs à la représentation $\rhobar$ et au type $\ttt$}
\label{fig:poidsserre}
\end{figure}

\begin{thm}\label{thm:KisinBT} 
Soit $\rhobar$ une représentation irréductible et $\ttt$ un type galoisien comme dans la partie \ref{sec:methode}.
\begin{enumerate}[(i)]
\item Supposons $\rhobar$ non totalement non générique. Alors nous avons :
$$
\begin{array}{rcll}
R^\psi(\vv_0,\ttt,\rhobar) & \simeq & \{0\} & \text{si } \D(\ttt) \cap \D(\rhobar) = \emptyset \; ; \medskip \\
& \simeq & \displaystyle\frac{\oE[[ X , Y , T ]]}{ (XY + p^2)}& \text{si } \D(\ttt) \cap \D(\rhobar) \text{ contient un poids modifié de } \rhobar \; ; \medskip\\
& \simeq & \displaystyle \frac{\oE[[ X , Y , T ]]}{(XY + p)} & \text{sinon}.\\
\end{array}
$$
\item Supposons $\rhobar$ totalement non générique, c'est-à-dire de la forme $\Ind_{G_{F'}}^{G_F} (\omega_4 \cdot \nr' (\theta) )$ ou $\Ind_{G_{F'}}^{G_F} (\omega_4^p \cdot \nr' (\theta) )$.
Soit $\ttt$ un type galoisien tel que $\D(\ttt)$ contient le poids modifié de $\rhobar$ ; alors nous avons
$$ R^\psi(\vv_0,\ttt,\rhobar) \simeq \displaystyle\frac{\oE[[ X , Y , T ]]}{ (XY + p^2)}.$$
\end{enumerate}
\end{thm}

\begin{rem}
Le seul cas du tableau de la figure \ref{fig:poidsserre} (page \pageref{fig:poidsserre}) qui n'est pas traité par le théorème précédent est celui où la représentation $\rhobar$ est totalement non générique et où les poids communs à $\ttt$ et $\rhobar$ sont exactement les deux poids non modifiés de $\rhobar$ (rappelons que $\rhobar$ a alors trois poids, dont un seul modifié, voir le début de la partie \ref{sec:degre2}).
D'après la remarque \ref{rem:totnongen}, l'anneau $R^\psi(\vv_0,\ttt,\rhobar)$ s'identifie alors à un sous-anneau strict de $\displaystyle \frac{\oE[[ X , Y , T ]]}{(XY + p)} $.

\end{rem}

Nous observons que, dans tous les cas du théorème \ref{thm:KisinBT} pour lesquels l'anneau $R^\psi(\vv_0,\ttt,\rhobar)$ n'est pas nul, la multiplicité galoisienne $\mu_{\Gal}(\vv_0, \ttt, \rhobar)$ est $2$ et l'ensemble $\D(\ttt) \cap \D(\rhobar)$ est composé d'exactement deux poids.
Ainsi pour tout poids de Serre $\sigma$ de $\rhobar$, nous avons : si $\sigma$ est contenu dans un $\D(\ttt)$ pour un  $\ttt$ tel que $R^\psi(\vv_0,\ttt,\rhobar)$ est prédit par le théorème \ref{thm:KisinBT} et est non nul, alors $\sigma$ a multiplicité intrinsèque $1$ dans $\rhobar$.

Or, tout poids de Serre de $\rhobar$ est dans un tel $\D(\ttt)$, sauf :
\begin{itemize}
\item[$\bullet$] si $\rhobar$ est de la forme $ \Ind_{G_{F'}}^{G_F} (\omega_4^{p-1} \cdot \nr' (\theta) ) \otimes \omega_2^s$ ou $ \Ind_{G_{F'}}^{G_F} (\omega_4^{p(p-1)} \cdot \nr' (\theta) ) \otimes \omega_2^s$, le poids de Serre (modifié) totalement irrégulier $(p-1,p-1)\otimes\det^{s'}$ (avec $s' = s-1$ ou $s'=s-p$) ;
\item[$\bullet$] si $\rhobar$ est totalement non générique, l'unique poids de $\rhobar$ qui est non modifié et qui n'est pas le symétrique du poids modifié, c'est-à-dire le poids $(p-1,p-2)\otimes\det^{s}$ (resp. $(p-2,p-1)\otimes\det^{s}$) pour la représentation $ \Ind_{G_{F'}}^{G_F} (\omega_4 \cdot \nr' (\theta) ) \otimes \omega_2^s$ (resp. $ \Ind_{G_{F'}}^{G_F} (\omega_4^{p} \cdot \nr' (\theta) ) \otimes \omega_2^s$).
\end{itemize}
Nous en déduisons le corollaire suivant.

\begin{cor}
\label{coro:multiplicites}
Soit $\rhobar$ une représentation continue irréductible de~$G_{\Q_{p^2}}$ dans~$\GL_2(\Fpbar)$
\begin{enumerate}[(i)]
\item Supposons que $\rhobar$ n'est pas totalement non générique et n'a pas de poids totalement irrégulier. Alors les quatre poids de $\rhobar$ ont pour multiplicité intrinsèque $1$.
\item Supposons que $\rhobar$ possède un poids totalement irrégulier. Alors $\rhobar$ n'est pas totalement non générique et les trois poids de $\rhobar$ qui ne sont pas totalement irréguliers ont pour multiplicité intrinsèque $1$.
\item Supposons que $\rhobar$ est totalement non générique. Alors le poids modifié de $\rhobar$ et son symétrique ont pour multiplicité intrinsèque $1$.
\end{enumerate}
\end{cor}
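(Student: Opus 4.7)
L'idée directrice est d'exploiter le théorème \ref{thm:KisinBT} conjointement avec la formule de Breuil--Mézard (démontrée dans ce cadre par Gee--Kisin, \cite{GK}) rappelée en tête du \S \ref{ssec:mipS} :
$$\mu_{\Gal}(\vv_0, \ttt, \rhobar) = \sum_{\sigma \in \D(\rhobar) \cap \D(\ttt)} m_{\rhobar}(\sigma),$$
sachant que les multiplicités intrinsèques $m_{\rhobar}(\sigma)$ sont des entiers \emph{strictement positifs} pour $\sigma \in \D(\rhobar)$.

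Je commencerai par calculer la multiplicité de Hilbert--Samuel de la fibre spéciale des deux anneaux qui apparaissent dans le théorème \ref{thm:KisinBT} : pour tout entier $n \geq 1$, l'anneau $\oE[[X,Y,T]]/(XY+p^n)$ a pour fibre spéciale $k_E[[X,Y,T]]/(XY)$, et un calcul direct du polynôme de Hilbert--Samuel pour cet anneau de dimension $2$ donne une multiplicité égale à $2$. Donc, à chaque fois que $R^\psi(\vv_0, \ttt, \rhobar)$ est non nul, on a $\mu_{\Gal}(\vv_0, \ttt, \rhobar) = 2$.

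L'étape suivante consiste à confronter ce calcul à la liste du tableau de la figure \ref{fig:poidsserre}. On y constate que, pour tout couple $(\rhobar, \ttt)$ pour lequel le théorème \ref{thm:KisinBT} fournit un anneau de déformations non nul, l'intersection $\D(\rhobar) \cap \D(\ttt)$ comporte exactement deux poids de Serre $\sigma_1, \sigma_2$. Par la formule rappelée ci-dessus, on obtient alors l'égalité $m_{\rhobar}(\sigma_1) + m_{\rhobar}(\sigma_2) = 2$ entre entiers strictement positifs, ce qui force $m_{\rhobar}(\sigma_1) = m_{\rhobar}(\sigma_2) = 1$.

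Il ne reste plus qu'à vérifier, pour chaque type de représentation irréductible $\rhobar$ et chaque poids $\sigma \in \D(\rhobar)$ concerné par l'énoncé du corollaire, qu'il existe effectivement un type galoisien $\ttt$ contenu dans la liste de la figure \ref{fig:poidsserre} et pour lequel le théorème \ref{thm:KisinBT} garantit la non-nullité de $R^\psi(\vv_0, \ttt, \rhobar)$. Cette vérification se fait à la main, case par case, en parcourant les quatre familles de la partie \ref{sssec:poidsrhobar} :
\begin{itemize}
\item[$\bullet$] cas (i) : pour une $\rhobar$ qui n'est ni totalement non générique ni porteuse d'un poids totalement irrégulier, la lecture croisée des figures \ref{fig:engeances} et \ref{fig:poidsserre} fournit, pour chacun des quatre poids de $\rhobar$, un type $\ttt$ convenable ;
\item[$\bullet$] cas (ii) : lorsque $\rhobar$ possède un poids totalement irrégulier, elle est nécessairement non totalement non générique (la liste des poids des représentations totalement non génériques, donnée au \S \ref{sssec:poidsrhobar}, ne contient aucun poids totalement irrégulier), et chacun des trois poids non totalement irréguliers apparaît dans un type $\ttt$ à déformation non nulle ;
\item[$\bullet$] cas (iii) : pour les représentations totalement non génériques, l'alinéa (ii) du théorème \ref{thm:KisinBT} produit précisément un type $\ttt$ convenable pour le poids modifié et, par symétrie via la lecture du tableau \ref{fig:poidsserre}, également pour son symétrique.
\end{itemize}
La seule partie un peu délicate réside dans cette vérification exhaustive et dans le contrôle des poids exclus de l'énoncé, à savoir les poids totalement irréguliers (qui n'apparaissent dans aucun $\D(\ttt)$ pour un $\ttt$ à déformation non nulle traité ici, comme le montre le tableau \ref{fig:poidsserre}) et, dans le cas totalement non générique, le troisième poids non modifié, pour lequel la remarque \ref{rem:totnongen} interdit d'appliquer directement l'argument. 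Ces cas résiduels ne sont pas couverts par la méthode et font l'objet de l'avertissement qui suit immédiatement le théorème \ref{thm:KisinBT}.
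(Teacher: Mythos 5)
Votre démonstration est correcte et suit essentiellement le même chemin que le texte : multiplicité de Hilbert--Samuel égale à $2$ pour les fibres spéciales $k_E[[X,Y,T]]/(XY)$ issues du théorème \ref{thm:KisinBT}, confrontation avec le fait que $\D(\rhobar)\cap\D(\ttt)$ compte exactement deux poids (formule de Gee--Kisin avec $m_{\rhobar}(\sigma)\geq 1$, donc $1+1=2$), puis vérification que tout poids visé par l'énoncé figure dans un tel $\D(\ttt)$, les seules exceptions étant le poids totalement irrégulier et, dans le cas totalement non générique, le troisième poids non modifié. C'est exactement l'argument qui précède le corollaire dans l'article.
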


\end{document}